\documentclass[11pt, reqno]{amsart}

\usepackage[a4paper,
            top=30mm,bottom=30mm,
            left=35mm,right=35mm]{geometry}

\usepackage{amsmath, amsthm, amsfonts, amssymb, amscd, mathtools, bm} 
\usepackage{xcolor} 
\usepackage[unicode=true]{hyperref} 
\usepackage[shortlabels]{enumitem} 
\usepackage[T1]{fontenc}
\usepackage{empheq}
\usepackage{placeins}

\theoremstyle{plain}
\newtheorem{theorem}{Theorem}[section]
\newtheorem{lemma}[theorem]{Lemma}

\newtheorem{proposition}[theorem]{Proposition}

\theoremstyle{definition}
\newtheorem{definition}[theorem]{Definition}
\newtheorem{example}[theorem]{Example}

\theoremstyle{remark}
\newtheorem{remark}[theorem]{Remark}

\numberwithin{equation}{section}

\newcommand{\eps}{\varepsilon}

\newcommand{\dx}{\mathrm{d} x}

\newcommand{\dz}{\mathrm{d} z}
\newcommand{\dt}{\mathrm{d} t}
\newcommand{\ds}{\mathrm{d} s}
\newcommand{\dm}{\mathrm{d} \mu}
\newcommand{\rd}{\mathrm{d}}
\newcommand{\loc}{\mathrm{loc}}
\newcommand{\X}{\mathrm{X}}
\newcommand{\N}{\mathbb{N}}
\newcommand{\R}{\mathbb{R}}

\newcommand{\F}{\mathrm{F}}

\newcommand{\idot}{\! \cdot \!}

\DeclareMathOperator{\supp}{supp}
\DeclareMathOperator{\Div}{div}

\title[]{\boldmath An $L^1$-theory for $p$-Schr\"{o}dinger equations\\ with confinement in measure}

\author[]{Nuno J. Alves}
\author[]{Jos\'e Miguel Urbano}

\address[N. J. Alves]{
      Applied Mathematics and Computational Sciences (AMCS), Computer, Electrical and Mathematical Sciences and Engineering Division (CEMSE), King Abdullah University of Science and Technology (KAUST), Thuwal, 23955-6900, Kingdom of Saudi Arabia.}
\email{nuno.januarioalves@kaust.edu.sa}

\address[J.M. Urbano]{
      Applied Mathematics and Computational Sciences (AMCS), Computer, Electrical and Mathematical Sciences and Engineering Division (CEMSE), King Abdullah University of Science and Technology (KAUST), Thuwal, 23955-6900, Kingdom of Saudi Arabia and CMUC, Department of Mathematics, University of Coimbra, 3000-143 Coimbra, Portugal.}
\email{miguel.urbano@kaust.edu.sa}

\begin{document}

\begin{abstract}
We consider stationary $p$-Schr\"odinger equations on the whole space with integrable data and potentials that are confining in measure. We introduce asymptotic energy solutions in an asymptotic $L^p$ framework and establish existence and uniqueness in the degenerate range $p\ge2$. The proof relies on a new Rellich--Kondrachov-type compactness theorem of independent interest, which provides sufficient conditions for families of Sobolev functions to be precompact in asymptotic $L^p$ spaces, without any dimension-dependent restriction on the exponent. For data in the duality regime $L^1(\R^n)\cap L^{p'}(\R^n)$, asymptotic energy solutions coincide with weak energy solutions. We also show that additional compactness assumptions yield localized entropy-type solutions and, under suitable local regularity, distributional solutions.
\end{abstract}

\subjclass[2020]{Primary 35J92, 46E35;  Secondary 35D30, 46E30}


\keywords{Asymptotic $L^p$ spaces, Rellich--Kondrachov compactness, stationary $p$-Schr\"{o}dinger equations, confinement in measure, integrable data, asymptotic energy solutions}
\maketitle
\thispagestyle{empty}

\section{Introduction and main results}

Let $n\in\N$ and $1<p<\infty$. In this paper, we consider the equation
\begin{equation}\label{eq:p-Schrodinger}
-\Div\bigl(|\nabla u|^{p-2}\nabla u\bigr)+V|u|^{p-2}u=f
\qquad\text{in }\R^n,
\end{equation}
where $f\in L^1(\R^n)$ and $V\in L^\infty_{\rm loc}(\R^n)$ are given. We regard~\eqref{eq:p-Schrodinger} as a stationary $p$-Schr\"odinger-type equation on the whole space: the $p$-Laplacian plays the role of a nonlinear kinetic term, while the potential $V$ acts as an external field. In the linear case $p=2$, stationary whole-space Schr\"odinger equations obtained by the standing-wave ansatz form a classical topic; see, for instance,~\cite{strauss1977solitary, rabinowitz1992schrodinger}. In that literature, one typically studies stationary equations with right-hand side~$F(u)$ for some nonlinearity~$F$. Here, instead, we consider the equation with prescribed datum $f\in L^1(\R^n)$ under a confining assumption on the potential.

Since the domain is $\R^n$, confinement is produced by the potential rather
than by boundary conditions. In the theory of trapped Schr\"odinger equations,
a \emph{confining potential} is typically a potential such that $V(x) \to \infty$ as $|x| \to \infty$, so that mass is prevented from escaping to infinity. A standard
quantitative version of this requirement is the existence of constants
$\kappa,\gamma>0$ and $R_0\ge0$ such that
\[
V(x)\ge \kappa |x|^\gamma
\qquad\text{for a.e. }x\in\R^n\text{ with }|x|\ge R_0.
\]
In this paper, we work with a weaker notion. We say that $V$ is
\emph{confining in measure} if there exist $\kappa,\gamma>0$ such that the sets
\begin{equation}\label{eq:set_E_R}
E_R:=\bigl\{x\in\R^n:\ |x|\geq R,\ V(x)<\kappa |x|^\gamma\bigr\}, \qquad R>0,
\end{equation}
satisfy
\begin{equation}\label{eq:confinement_measure}
|E_R|\to 0
\qquad\text{as }R\to\infty.
\end{equation}
In other words, $V$ may fall below the confining profile $\kappa |x|^\gamma$
even arbitrarily far from the origin, but the set where this happens becomes
negligible at infinity. Here and throughout, $|A|$ denotes the Lebesgue measure of a subset $A \subseteq \R^n$.

Physically, condition~\eqref{eq:confinement_measure} models an external trap that is effective at large
distances except on a sparse family of defective regions where the potential is
locally weaker. This is consistent with whole-space trapped models from the
theory of confined quantum gases, and also with the presence of disorder or
impurities in trapping potentials; see, for instance,
\cite{dalfovo1999trapped,sanchez2010disordered,lye2005random,billy2008anderson}.
Our sparse-wells example makes this picture explicit: the background trap grows
like $|x|^\gamma$, but it contains infinitely many small wells placed farther
and farther away, with finite total measure; see Example~\ref{ex:sparse-wells}.

Our main goal is to develop an $L^1$-theory for \eqref{eq:p-Schrodinger} on
$\R^n$ under this weak confinement assumption. The natural solution space is not
the classical space $L^p(\R^n)$, but the asymptotic space
\begin{equation}\label{eq:spaceLambda}
\Lambda^p(\R^n)
\coloneqq
\left\{u:\R^n\to\R\text{ measurable} \ \Big| \
\int_{\R^n}\min(|u|,1)^p\,\dx<\infty\right\},
\end{equation}
introduced in~\cite{alves2025F}. Some basic facts on asymptotic $L^p$ spaces,
including their characterization in terms of almost-$L^p$ functions on general
measure spaces, are collected in Appendix~\ref{appendix}.

This space is intrinsic to truncation methods: it is defined through the
$p$-power of the basic truncation $\min(|\idot|,1)$. In particular, the quantity
$\|\!\min(|u|,1)\|_p$ measures the size of the set where $u$ is large, rather
than its full $L^p$-mass. The induced translation-invariant metric is
\[
d(u,v)\coloneqq \|\!\min(|u-v|,1)\|_p,
\]
and with this metric $\Lambda^p(\R^n)$ is a complete metrizable topological
vector space; in the terminology of Kalton, Peck and
Roberts~\cite{kalton1984sampler}, it is an $\mathrm{F}$-space. At the same time, $\Lambda^p(\R^n)$ is highly nonclassical: it is nonlocally convex, nonlocally bounded, and has
trivial dual; see~\cite{alves2025F}. In particular, weak convergence methods are unavailable in this setting. Nevertheless, one can still develop a well-posedness theory in these spaces. The present paper provides a first existence and uniqueness result in this asymptotic framework.

We introduce a notion of \emph{asymptotic energy solution} to
\eqref{eq:p-Schrodinger} defined through approximation in the energy space
$\X$ given by
\begin{equation}\label{eq:spaceX}
\X
:=
\Bigl\{v\in W^{1,p}(\R^n):\,
\int_{\R^n}V\,|v|^p\,\dx<\infty\Bigr\},
\end{equation}
and endowed with the norm
\begin{equation} \label{eq:normX}
\|v\|_{\X}
:=
\left(
\int_{\R^n}|\nabla v|^p\,\dx
+
\int_{\R^n}V\,|v|^p\,\dx
\right)^{1/p}.
\end{equation}
Roughly speaking, a function $u\in\Lambda^p(\R^n)$ is an asymptotic
energy solution if every truncation $T_\alpha(u)$ belongs to $\X$, and if $u$ arises as the limit of weak energy solutions to approximate problems, with
convergence of each truncation in the norm of $\X$; see Definition~\ref{def:ATS}. Here and throughout, for each $t>0$, we denote by $T_t$ the truncation function
\begin{equation}\label{eq:truncations}
T_t(s)=\max\big\{\!-t,\min\{s,t\}\big\},
\qquad s\in\R.
\end{equation}

Our first main result is the following existence and uniqueness theorem.

\begin{theorem}\label{thm:AES}
Assume $p\ge2$, let $f\in L^1(\R^n)$, and let
$V\in L^\infty_{\rm loc}(\R^n)$ satisfy $V\ge1$ a.e.\ and be confining in
measure. Then there exists a unique asymptotic energy solution
$u\in \Lambda^p(\R^n)$ of~\eqref{eq:p-Schrodinger}. Moreover, for every
$\alpha>0$,
\begin{equation} \label{eq:AES}
\|T_\alpha(u)\|_{\X}^p
\le \alpha\,\|f\|_1.
\end{equation}
\end{theorem}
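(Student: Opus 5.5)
We approximate the datum: choose $f_k\in L^1(\R^n)\cap L^{p'}(\R^n)$ (for instance $f_k=T_k(f)\chi_{B_k}$) with $f_k\to f$ in $L^1(\R^n)$. For each $k$ the functional $v\mapsto \frac1p\|v\|_{\X}^p-\int f_k v$ is coercive, strictly convex and weakly lower semicontinuous on the reflexive space $\X$. Linearity of $v\mapsto\int f_kv$ on $\X$ holds because $V\ge1$ gives $\|v\|_p\le\|v\|_{\X}$. Hence there is a unique weak energy solution $u_k\in\X$ with
\[
\int |\nabla u_k|^{p-2}\nabla u_k\cdot\nabla\varphi+\int V|u_k|^{p-2}u_k\varphi=\int f_k\varphi\qquad\forall\varphi\in\X .
\]
Testing with $T_\alpha(u_k)\in\X$ and using that $\nabla T_\alpha(u_k)=\nabla u_k\chi_{\{|u_k|<\alpha\}}$ and $|u_k|^{p-2}u_kT_\alpha(u_k)\ge|T_\alpha(u_k)|^p$, we get $\|T_\alpha(u_k)\|_{\X}^p\le\alpha\|f_k\|_1$.

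The core step is to show that $(u_k)$ is Cauchy in $\Lambda^p(\R^n)$ and that $T_\alpha(u_k)$ is Cauchy in $\X$. For the first part, test the difference of the equations for $u_k,u_j$ with $T_\eps(u_k-u_j)$. Since $p\ge2$, strong monotonicity gives
\[
\bigl(|a|^{p-2}a-|b|^{p-2}b\bigr)\cdot(a-b)\ge c|a-b|^p ,
\]
applied both to gradients and to the zero-order term. This yields
\[
\int_{\{|u_k-u_j|<\eps\}}|\nabla(u_k-u_j)|^p+\int V\,|u_k-u_j|^{p-1}|T_\eps(u_k-u_j)|\le C\eps\|f_k-f_j\|_1 .
\]
Taking $\eps=1$ controls $\int\min(|u_k-u_j|,1)^p$ by $C\|f_k-f_j\|_1$ directly, because $V\ge1$. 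So $(u_k)$ is Cauchy in $\Lambda^p$, and completeness gives a limit $u$. It then converges in measure on sets of finite measure, and a subsequence converges a.e.

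For strong $\X$-convergence of truncations I would use the compactness theorem announced in the abstract. The bound $\|T_\alpha(u_k)\|_{\X}^p\le\alpha\|f\|_1$ together with confinement in measure should make $\{T_\alpha(u_k)\}_k$ precompact in $L^p$. Tails are handled by splitting into $\{|x|\ge R\}\setminus E_R$, where $V\ge\kappa R^\gamma$ gives smallness, and $E_R$, where $|T_\alpha|\le\alpha$ and $|E_R|\to0$; local compactness comes from Rellich. The estimate above, which controls $\nabla(u_k-u_j)$ on $\{|u_k-u_j|<\eps\}$, is then combined with a Boccardo--Gallouët/Landes-type argument. One tests with $T_\eps(T_\alpha(u_k)-T_\alpha(u_j))$-type functions and uses that $|\{|u_k|>\alpha,|u_j|<\alpha\}|$-contributions vanish. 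This yields $\nabla T_\alpha(u_k)\to\nabla T_\alpha(u)$ in $L^p$ and $\int V|T_\alpha(u_k)-T_\alpha(u)|^p\to0$. I expect this step to be the main obstacle, since weak methods are unavailable in $\Lambda^p$ and strong convergence of gradients must be extracted purely from monotonicity; here $p\ge2$ is decisive. Once it is done, lower semicontinuity gives $T_\alpha(u)\in\X$ and $\|T_\alpha(u)\|_{\X}^p\le\alpha\|f\|_1$, so $u$ is an asymptotic energy solution as in Definition~\ref{def:ATS}.

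For uniqueness, let $u,\tilde u$ be two asymptotic energy solutions with approximating sequences $(u_k,f_k)$ and $(\tilde u_k,\tilde f_k)$, both with $f_k,\tilde f_k\to f$ in $L^1$. The $\eps=1$ estimate applied to $u_k-\tilde u_k$ gives $\|\min(|u_k-\tilde u_k|,1)\|_p^p\le C\|f_k-\tilde f_k\|_1\to0$. Passing to the limit in $\Lambda^p$ gives $d(u,\tilde u)=0$. If the definition allows approximating data only in some weaker sense, I would first interleave the two sequences to reduce to this case.
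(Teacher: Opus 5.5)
\textbf{There is a genuine gap in the existence part: condition (ii) of Definition~\ref{def:ATS} is never proved.}

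That condition asks for $T_\alpha(u_k-u)\to0$ in $\X$, i.e.\ truncations of the \emph{difference}. You instead aim at $T_\alpha(u_k)\to T_\alpha(u)$ in $\X$, which is a different statement. You then conclude from lower semicontinuity alone that $u$ is an asymptotic energy solution. Moreover, the step you flag as the main obstacle is not carried out. A Boccardo--Gallou\"et/Landes-type argument would have to control $\nabla T_\alpha(u_k)$ on sets such as $\{|u_k|<\alpha\le|u_j|\}$. Small measure of these sets is not enough, because $|\nabla T_\alpha(u_k)|^p$ is only bounded in $L^1$, not equi-integrable. The appeal to the compactness theorem in that paragraph is also superfluous in your scheme. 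Once $u_k\to u$ in $\Lambda^p(\R^n)$, the bound $|T_\alpha(a)-T_\alpha(b)|\le\min(|a-b|,2\alpha)$ already gives $T_\alpha(u_k)\to T_\alpha(u)$ in $L^p(\R^n)$.

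\textbf{The missing idea is that no gradient convergence is needed.} You only have to pass to the limit in your own stability estimate by weak lower semicontinuity. Fix $k$ and set $w_j:=T_\alpha(u_k-u_j)$.
\begin{enumerate}[(a)]
\item Your estimate gives $\|w_j\|_{\X}^p\le C\alpha\|f_k-f_j\|_1$, so $\{w_j\}$ is bounded in $\X$.
\item By the same bound $|T_\alpha(a)-T_\alpha(b)|\le\min(|a-b|,2\alpha)$, $w_j\to T_\alpha(u_k-u)$ in $L^p(\R^n)$.
\item By Lemma~\ref{lem:weakcompactX}, a subsequence converges weakly in $L^p$ to an element of $\X$. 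This limit must be $T_\alpha(u_k-u)$.
\item Lower semicontinuity then gives $\|T_\alpha(u_k-u)\|_{\X}^p\le C\alpha\|f_k-f\|_1\to0$.
\end{enumerate}
The same argument applied to $\|T_\alpha(u_k)\|_{\X}^p\le\alpha\|f\|_1$ yields (i) and \eqref{eq:AES}. This is exactly how the paper concludes.

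\textbf{With this repair, your proof is a genuine simplification of the paper's.} The paper produces $u$ via the asymptotic Rellich--Kondrachov theorem, whose tail condition is where confinement in measure enters. It then needs a diagonal extraction and Lemma~\ref{lem:convmeasure_from_trunc}. Your $\eps=1$ estimate gives $d(u_k,u_j)^p\le C\|f_k-f_j\|_1$. Together with completeness of $\Lambda^p(\R^n)$, this yields convergence of the whole sequence directly. So for $p\ge2$, neither the compactness theorem nor confinement in measure is actually used.

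Your uniqueness argument is correct and is essentially the paper's. Note that (ii) gives $d(u_k,u)\le\|T_1(u_k-u)\|_{\X}\to0$ immediately, so no interleaving of sequences is needed.
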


As usual, we denote by $p'=p/(p-1)$ the conjugate exponent of $p$. The proof of
Theorem~\ref{thm:AES} is based on an approximation procedure in the energy space
$\X$. One solves regularized problems for data
$f_j\in L^1(\R^n)\cap L^{p'}(\R^n)$, derives uniform estimates for the
truncations of the corresponding weak energy solutions, and then passes to the
limit in the asymptotic space $\Lambda^p(\R^n)$. In this way, one obtains a
canonical limit object in $\Lambda^p(\R^n)$, characterized by the convergence
of all truncations in the norm of~$\X$.

In the duality regime $f\in L^1(\R^n)\cap L^{p'}(\R^n)$, the weak energy solution is itself an asymptotic energy solution, so the new
notion is consistent with the classical variational formulation. For general $f\in L^1(\R^n)$, the asymptotic energy solution is the natural notion of solution yielded by the present compactness method. We also show that,
under additional information on the approximating sequence, this solution can be
upgraded to a localized entropy-type formulation and, under further local
regularity, to the usual distributional equation.

The restriction $p\ge2$ comes from the stability estimate for the approximating
problems. In this range, the monotonicity of the $p$-Laplace vector field yields
direct control of the energy norm of truncations of the difference of two
solutions. This estimate is used both in the existence proof, where one shows
that the compactness limit is an asymptotic energy solution, and in the
uniqueness argument. For $1<p<2$, the corresponding monotonicity estimate is
weaker and does not lead, through the same argument, to convergence in the
energy norm.

A key ingredient in the proof of Theorem~\ref{thm:AES} is a compactness principle in the space~$\Lambda^p(\R^n)$ which is of independent interest. This compactness result is the mechanism that produces the asymptotic solution concept and, in our view, one of the main contributions of the paper. It gives a Rellich--Kondrachov-type criterion for total boundedness in $\Lambda^p(\R^n)$ under weak Sobolev control and a truncated decay condition at infinity. This furnishes a new structural tool for the analysis of PDEs in asymptotic spaces, with possible applications to other whole-space problems with very weak data.

 For $1\le q<\infty$, we denote by $L^{q,\infty}(\R^n)$ the usual weak $L^q$
space, endowed with the quasi-norm
\[
\|h\|_{q,\infty}
:=
\sup_{\lambda>0}\lambda\,\big|\{x\in\R^n:\ |h(x)|>\lambda\}\big|^{1/q}.
\]
Unless otherwise specified, $L^p$-norms are taken over $\R^n$, and we write
\[
\|\idot\|_p:=\|\idot\|_{L^p(\R^n)}.
\]

Our second main result is the following Rellich--Kondrachov-type compactness theorem.
\begin{theorem}\label{thm_ARK}
Let $1\le p<\infty$, and let $\mathcal F$ be a family of functions in
$W^{1,p}(\R^n)$. Assume that the following conditions hold:
\begin{enumerate}[(i)]
\item There exist $C>0$ and $1<q<\infty$ such that
\begin{equation} \label{eq:ARK1}
\|f\|_p+\|\nabla f\|_{q,\infty}\le C
\end{equation}
for all $f\in\mathcal F$.
\smallskip
\item For every $\varepsilon>0$, there exists $R>0$ such that
\begin{equation}\label{eq:ARK2}
\int_{|x|>R}\min(|f(x)|,1)^p\,\dx<\varepsilon^p
\end{equation}
for all $f\in\mathcal F$.
\end{enumerate}
Then $\mathcal F$ is totally bounded in $\Lambda^p(\R^n)$.
\end{theorem}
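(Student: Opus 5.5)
Since the metric $d(u,v)=\|\min(|u-v|,1)\|_p$ on $\Lambda^p(\R^n)$ is translation invariant and satisfies $d(u,v)\le \|u-v\|_p$, we split each $f\in\mathcal F$ into a part localized to a ball and a tail. Given $\varepsilon>0$, choose $R$ from (ii). Fix a cutoff $\chi\in C_c^\infty(B_{2R})$ with $0\le\chi\le1$, $\chi=1$ on $B_R$, and write $f=\chi f+(1-\chi)f$. The tail satisfies $\min(|(1-\chi)f|,1)\le\min(|f|,1)\mathbf 1_{\{|x|>R\}}$, so $\|\min(|(1-\chi)f|,1)\|_p<\varepsilon$ uniformly. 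The $\Lambda^p$ quasi-triangle inequality (here actually a triangle inequality, since $\min(|a+b|,1)\le\min(|a|,1)+\min(|b|,1)$ and Minkowski applies) then reduces the problem to showing that $\{\chi f: f\in\mathcal F\}$ is totally bounded in $\Lambda^p$. Because $d\le\|\cdot\|_p$, it suffices to show that this family is totally bounded in $L^p(B_{2R})$.

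For the localized family we use the Fréchet–Kolmogorov criterion on the bounded set $B_{2R}$. Uniform $L^p$ boundedness comes directly from (i). The core estimate is equicontinuity of translations,
\[
\sup_{f\in\mathcal F}\|\chi f(\cdot+h)-\chi f\|_{L^p}\to0 \quad (h\to0).
\]
Since $\nabla f$ is only bounded in $L^{q,\infty}$, and $q$ may be smaller than $p$, we cannot use $W^{1,p}$ bounds directly. Instead we pass through a lower exponent. On the bounded set $B_{2R}$, $L^{q,\infty}\subset L^r$ for every $1\le r<q$, with $\|\nabla f\|_{L^r(B_{2R})}\le C_{R,r}\|\nabla f\|_{q,\infty}$. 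Together with $\|f\|_p$, this bounds $\chi f$ in $W^{1,r}$ with $r=\min(p,\tfrac{1+q}{2})>1$, uniformly over $\mathcal F$ (using $|\nabla\chi|\,|f|\in L^p\subset L^r$ locally). The classical estimate $\|g(\cdot+h)-g\|_r\le|h|\,\|\nabla g\|_r$ then gives uniform $L^r$-equicontinuity, and hence total boundedness of $\{\chi f\}$ in $L^r(B_{2R})$ by Rellich–Kondrachov.

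The remaining step, which we expect to be the main obstacle, is to upgrade $L^r$ precompactness to the $\Lambda^p$ metric without dimension-dependent Sobolev embeddings. The upgrade should exploit the truncation $\min(\cdot,1)$ rather than $L^p$ interpolation. For $g=\chi f-\chi \tilde f$ supported in $B_{2R}$,
\[
\int\min(|g|,1)^p\le\int\min(|g|,1)^r\le\|g\|_r^r,
\]
because $\min(|g|,1)\le1$ and $r\le p$. So an $L^r$-net of $\{\chi f\}$ with radius $\delta$ is automatically a $\Lambda^p$-net with radius $\delta^{r/p}$. This is precisely why no restriction on the exponents is needed: the truncation makes the large-value part harmless, and the small-value part is controlled by a lower norm on a finite-measure set. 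We only need to check that the net points can be chosen within the family (or within $\Lambda^p$, which suffices for total boundedness), and then assemble the nets: an $\varepsilon$-net for the localized family, combined with the uniform $\varepsilon$ tail bound, yields a $2\varepsilon$-net for $\mathcal F$.

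If $p=1$, we take $r=1$ throughout. In that case Rellich–Kondrachov in $W^{1,1}(B_{2R})$ into $L^1$ still holds, so the argument goes through unchanged.
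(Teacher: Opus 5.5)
\textbf{Verdict: correct, but a genuinely different route from the paper.}

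The paper verifies the three conditions of the Kolmogorov--Riesz theorem in $\Lambda^p(\R^n)$ taken from its companion work. The level-set condition comes from the $L^p$ bound, and the tail condition is hypothesis (ii). Equicontinuity of translations in the $\Lambda^p$ metric comes from the pointwise bound $|f(x+y)-f(x)|\le C|y|\,(M(|\nabla f|)(x+y)+M(|\nabla f|)(x))$, applied off the set $\{M(|\nabla f|)>\lambda\}$. The measure of that set is controlled by the boundedness of $M$ on $L^{q,\infty}$, which is where $q>1$ enters.

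You instead cut off at radius $R$ and absorb the exterior part with (ii). You then reduce the localized family $\{\chi f\}$ to classical compactness in $L^r$ via the pointwise domination $\min(|s|,1)^p\le\min(|s|,1)^r\le|s|^r$ for $r\le p$, that is, $d(g_1,g_2)^p\le\|g_1-g_2\|_r^r$. The steps check out:
\begin{itemize}
\item $L^{q,\infty}(B_{2R})\subseteq L^r(B_{2R})$ for $r<q$;
\item the H\"older bounds on $\chi f$ and $f\nabla\chi$ hold;
\item the translation estimate holds in $W^{1,r}(\R^n)$, so Kolmogorov--Riesz in $L^r(\R^n)$ (or Rellich--Kondrachov for $W^{1,r}_0(B_{2R})$) applies in every dimension;
\item the nets assemble correctly.
\end{itemize}

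\textbf{What each route buys.} Yours is more elementary and self-contained: it needs no maximal function and no $\Lambda^p$ compactness criterion. It also makes the absence of any dimensional restriction transparent, since on sets of finite measure the $\Lambda^p$ metric is dominated by the $L^1$ norm. In fact you may simply take $r=1$. Your argument then shows that (i) can be weakened to $\sup_{f\in\mathcal F}\|f\|_{W^{1,1}(B_\rho)}<\infty$ for every $\rho>0$. The paper's argument instead stays intrinsic to the $\Lambda^p$ metric and illustrates the $\Lambda^p$ Kolmogorov--Riesz criterion, at the cost of the maximal-function machinery.

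\textbf{One cleanup.} Drop your opening remark that it suffices to prove total boundedness in $L^p(B_{2R})$. That is not what you prove, and it is false in general. For $1/q-1/p\ge 1/n$, the bumps $k^{n/p}\phi(kx)$ with $0\ne\phi\in C_c^\infty(B_1)$ satisfy (i)--(ii) but have no $L^p$-convergent subsequence. Your $L^r$ detour is exactly what is needed.
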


A key point is that Theorem~\ref{thm_ARK} imposes \emph{no restriction relating
$p$ to the dimension}. This is in sharp contrast with the classical
Rellich--Kondrachov theorem, where compact embedding into Lebesgue spaces is
governed by the critical Sobolev exponent; see, for instance,
\cite{evans2010partial,hanche2010kolmogorov,hanche2016addendum}. Here, however, the target space is not a Lebesgue space but the asymptotic space $\Lambda^p(\R^n)$, whose topology is substantially weaker. The proof builds on the Kolmogorov--Riesz compactness theorem for $\Lambda^p(\R^n)$ obtained in~\cite{alves2026kolmogorov} and combines it with translation estimates for Sobolev functions derived through the Hardy--Littlewood maximal operator; see
also~\cite{nguyen2021levelset} for related level-set and maximal-function techniques.

The well-posedness result in Theorem~\ref{thm:AES} should also be viewed in the context of the $L^1$ theory and of the treatment of measure data for nonlinear
elliptic equations. On
bounded domains, truncation methods led to the foundational $L^1$-theory of entropy solutions by
B\'enilan et al.~\cite{benilan1995L1}. Closely related developments include renormalized formulations, as well as equations with measure data and absorption potentials; see, for instance,
\cite{boccardo1989measure, boccardo1996entropy, dalmaso1999renormalized, saintier2021absorption, gkikas2024absorption}.
Approximation-based notions of solution were also studied
in~\cite{dalmaso1997reachable}, and the entropy theory has been extended to the
variable-exponent setting in~\cite{sanchon2009entropy}.

On the whole space, nonlinear elliptic equations with rough data and no
prescribed boundary behavior were already studied by Boccardo, Gallou\"et and
V\'azquez~\cite{boccardo1993whole}. At first sight, their framework appears
more general, since it treats a broader class of operators and merely local
assumptions on the datum. However, the two theories address different regimes.
In the model equation
\[-\Div\bigl(|\nabla u|^{p-2}\nabla u\bigr)+|u|^{s-1}u=f,\]
the whole-space existence theory in~\cite{boccardo1993whole} is driven by an
absorption term of order $s$ strictly larger than $p-1$. By contrast, the
problem studied here has the borderline lower-order growth $V|u|^{p-2}u$, where
global compactness is recovered through confinement at infinity rather than
through stronger absorption in $u$. Moreover, the solutions
in~\cite{boccardo1993whole} are obtained in a local Sobolev framework, whereas
the present paper leads to a different type of whole-space theory: all
truncations of the solution belong to the weighted energy space $\X$, while the
solution itself is described by asymptotic $L^p$-integrability and is obtained
as the limit of an approximation scheme in $\X$.

The paper is organized as follows. In Section~\ref{sec:confinement}, we collect
some basic facts about confinement in measure and compare it with the usual pointwise growth condition. Section~\ref{sec:proof_ARK} is devoted to the proof of the asymptotic Rellich--Kondrachov theorem. In Section~\ref{sec:approximate}, we introduce the approximate problems and derive the basic stability estimates. In Section~\ref{sec:passage_limit}, we pass to the limit in the approximation scheme and derive some properties of the limit
function. Section~\ref{sec:asymptotic_ES} contains the proof of
Theorem~\ref{thm:AES} and the consistency of asymptotic energy solutions with
the weak formulation under higher integrability of the datum. Finally, in
Section~\ref{sec:cond_upgrades}, we study conditional upgrades to localized and
distributional formulations.

Throughout the paper, $\N$ denotes the set of positive integers. For local norms
we always indicate the domain explicitly, for instance $\|\cdot\|_{L^p(B)}$.
Given $x\in\R^n$ and $r>0$, the open ball centered at $x$ with radius $r$ is
denoted by $B(x,r)$. At the origin, we abbreviate this to~$B_r$.

\section{Confinement in measure} \label{sec:confinement}

We collect here some basic facts about the notion of confinement in measure
introduced above. We first show that condition~\eqref{eq:confinement_measure}
is equivalent to the global sublevel set
\[
\{x\in\R^n:\ V(x)<\kappa |x|^\gamma\}
\]
having finite measure. We then compare it with the usual pointwise growth condition
\[
V(x)\ge \kappa |x|^\gamma
\qquad\text{for }|x|\text{ sufficiently large},
\]
and finally present a sparse-wells example showing that confinement in measure
is strictly weaker.

\begin{lemma}\label{lem:conf-measure-finite-bad-set}
Let $V:\R^n\to\R$ be measurable, fix $\kappa,\gamma>0$, and consider the sets $E_R$ defined in~\eqref{eq:set_E_R}. Then~\eqref{eq:confinement_measure} holds if and only if
\begin{equation}\label{eq:finite-bad-set}
\big|\{x\in\R^n:\ V(x)<\kappa |x|^\gamma\}\big|<\infty.
\end{equation}
\end{lemma}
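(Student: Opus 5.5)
Write $A:=\{x\in\R^n:\ V(x)<\kappa|x|^\gamma\}$, which is measurable because $V$ is. By definition $E_R=A\setminus B_R$ up to the boundary sphere $\{|x|=R\}$, which is null. Hence $E_R=A\cap\{|x|\ge R\}$, and $R\mapsto E_R$ is a nonincreasing family of measurable sets with $\bigcap_{R>0}E_R=\emptyset$. The plan is to prove the two implications separately, using only additivity of Lebesgue measure and continuity from above.

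\emph{Finite bad set implies confinement.} Assume~\eqref{eq:finite-bad-set}. Take any sequence $R_k\uparrow\infty$. Then $E_{R_k}$ is a decreasing sequence of measurable sets with empty intersection. Moreover $|E_{R_1}|\le|A|<\infty$. Continuity from above of Lebesgue measure then gives $|E_{R_k}|\to0$. Since $R\mapsto|E_R|$ is monotone, this yields $|E_R|\to0$ as $R\to\infty$, which is~\eqref{eq:confinement_measure}. Equivalently, one can apply dominated convergence to $\mathbf 1_{E_R}\le\mathbf 1_A$.

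\emph{Confinement implies finite bad set.} Assume~\eqref{eq:confinement_measure}. Choose $R$ with $|E_R|\le1$. We split
\[
A=(A\cap B_R)\cup E_R,
\]
so that
\[
|A|\le|B_R|+|E_R|\le|B_R|+1<\infty.
\]

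There is no real obstacle here. The only point needing care is the finiteness hypothesis in continuity from above, which is exactly what~\eqref{eq:finite-bad-set} supplies. We also note, to justify why finiteness is needed, that the reverse direction does not require any integrability: boundedness of the ball handles the near region.
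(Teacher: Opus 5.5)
Your proof is correct and follows the paper's argument essentially verbatim: continuity from above (using $|A|<\infty$) for one direction, and the splitting $A=(A\cap B_R)\cup E_R$ together with $|B_R|<\infty$ for the other. Passing through a sequence $R_k\uparrow\infty$ and then using monotonicity of $R\mapsto|E_R|$ adds a little care that the paper leaves implicit; the boundary-sphere remark is unnecessary, since $E_R=A\cap\{|x|\ge R\}$ holds exactly by definition.
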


\begin{proof}
Set
\[
E:=\{x\in\R^n:\ V(x)<\kappa |x|^\gamma\}.
\]
Then
\[
E_R=E\cap\{|x|\geq R\}
\qquad\text{for every }R>0.
\]

Assume first that $|E|<\infty$. Since the sets $E_R$ decrease to the empty set
as $R\to\infty$, continuity from above yields
\[
|E_R|\to 0.
\]
Hence $V$ is confining in measure.

Conversely, assume that $|E_R|\to 0$ as $R\to\infty$. Choose $R_0>0$ such that
$|E_{R_0}|<1$. Then
\[
E=(E\cap B_{R_0})\cup E_{R_0}.
\]
Since $B_{R_0}$ has finite measure, also $E\cap B_{R_0}$ has finite measure,
and therefore $|E|<\infty$.
\end{proof}

\begin{lemma}\label{lem:classic-implies-measure}
Assume that there exist $\kappa,\gamma>0$ and $R_0\ge 0$ such that
\begin{equation}\label{eq:classic-confinement}
V(x)\ge \kappa |x|^\gamma
\qquad\text{for a.e. }x\in\R^n\text{ with }|x|\ge R_0.
\end{equation}
Then $V$ is confining in measure.
\end{lemma}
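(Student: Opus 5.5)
The plan is to verify the definition of confinement in measure directly, with the same constants $\kappa,\gamma$ that appear in~\eqref{eq:classic-confinement}. Equivalently, by Lemma~\ref{lem:conf-measure-finite-bad-set}, it suffices to show that the global sublevel set
\[
E=\{x\in\R^n:\ V(x)<\kappa|x|^\gamma\}
\]
has finite measure.

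To this end, split $E$ according to whether $|x|<R_0$ or $|x|\ge R_0$:
\[
E=(E\cap B_{R_0})\cup(E\cap\{|x|\ge R_0\}).
\]
The first piece is contained in the ball $B_{R_0}$ and therefore has finite measure; if $R_0=0$, this piece is empty. By hypothesis~\eqref{eq:classic-confinement}, the inequality $V(x)\ge\kappa|x|^\gamma$ holds for almost every $x$ with $|x|\ge R_0$, so the second piece is a null set. Hence $|E|\le|B_{R_0}|<\infty$, and Lemma~\ref{lem:conf-measure-finite-bad-set} yields~\eqref{eq:confinement_measure}.

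One can also argue directly: for every $R\ge R_0$ we have $E_R\subseteq E\cap\{|x|\ge R_0\}$, so $|E_R|=0$. In particular $|E_R|\to0$ as $R\to\infty$. There is no real obstacle here; the only care needed is that the hypothesis holds merely almost everywhere, which is handled by the null-set observation above.
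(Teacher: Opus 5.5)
Your proof is correct. Your closing direct argument, that $E_R\subseteq E\cap\{|x|\ge R_0\}$ is a null set for every $R\ge R_0$, is exactly the paper's one-line proof; the detour through the finite-measure characterization of $E$ (giving $|E|\le|B_{R_0}|$) is also valid but not needed.
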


\begin{proof}
Simply note that if~\eqref{eq:classic-confinement} holds, then for every $R\ge R_0$ we have 
\[|E_R|= \big| \bigl\{x\in\R^n:\ |x|\geq R,\ V(x)<\kappa |x|^\gamma\bigr\} \big| = 0.\]
\end{proof}

\begin{example}[Sparse wells]\label{ex:sparse-wells}
Fix $n\ge 1$ and $\gamma>0$, and let $e_1=(1,0,\dots,0)\in\R^n$. For each
$k\in\N$ set
\[
x_k:=2^k e_1,
\qquad
r_k:=2^{-2k},
\qquad
U:=\bigcup_{k=1}^\infty B(x_k,r_k).
\]
Define $V:\R^n\to[1,\infty)$ by
\begin{equation}\label{eq:V-sparse-wells}
V(x):=
\begin{cases}
1, & x\in U,\\
1+|x|^\gamma, & x\notin U.
\end{cases}
\end{equation}
We prove that $V$ is confining in measure, but $V$ does \emph{not} satisfy the
classical confinement condition \eqref{eq:classic-confinement}.

\par 
We first note that
\[
U\subseteq \{|x|>1\}.
\]
Indeed, for every $k\ge 1$ and every $x\in B(x_k,r_k)$,
\[
|x|\ge |x_k|-|x-x_k|>2^k-2^{-2k}\ge 2-\tfrac14>1.
\]
\noindent
\emph{(i) $V$ is confining in measure.}
Outside $U$ we have $V(x)=1+|x|^\gamma>|x|^\gamma$, so
\[
\{x\in\R^n:\ V(x)<|x|^\gamma\}\subseteq U.
\]
Conversely, if $x\in U$, then $V(x)=1$ and, since $|x|>1$ on $U$, we have
$1<|x|^\gamma$. Hence
\[
U\subseteq \{x\in\R^n:\ V(x)<|x|^\gamma\}.
\]
Therefore
\[
\{x\in\R^n:\ V(x)<|x|^\gamma\}=U.
\]
Since
\[
|U|
\le \sum_{k=1}^\infty |B(x_k,r_k)|
= \omega_n \sum_{k=1}^\infty r_k^n
= \omega_n \sum_{k=1}^\infty 2^{-2kn}
<\infty,
\]
where $\omega_n$ denotes the measure of the unit ball in $\R^n$, Lemma~\ref{lem:conf-measure-finite-bad-set} shows that $V$ is confining in measure.
\medskip \\
\emph{(ii) $V$ does not satisfy~\eqref{eq:classic-confinement}.}
Suppose by contradiction that there exist $\kappa>0$, $\beta>0$, and $R_0\ge 0$
such that
\begin{equation} \label{eq:sparse_aux}
V(x)\ge \kappa |x|^\beta
\qquad\text{for a.e. }x\in\R^n\text{ with }|x|\ge R_0.
\end{equation}
Choose $k$ so large that
\[
|x_k|-r_k>R_0
\qquad\text{and}\qquad
\kappa (|x_k|-r_k)^\beta>1.
\]
Then, for $x \in B(x_k,r_k)$,
\[|x| \geq |x_k|-r_k > R_0 \qquad \text{and} \qquad  \kappa |x|^\beta\ge \kappa (|x_k|-r_k)^\beta>1.\]
This together with~\eqref{eq:sparse_aux} yields that
\[V(x) \geq \kappa |x|^\beta > 1 \]
for a.e.\ $x\in B(x_k,r_k)$.

 On the other hand, by the definition of $V$ in~\eqref{eq:V-sparse-wells},
\[
V(x)=1
\qquad\text{for all }x\in B(x_k,r_k),
\]
a contradiction.
\end{example}

\section{Proof of the asymptotic Rellich--Kondrachov theorem} \label{sec:proof_ARK}

We shall use the Kolmogorov--Riesz compactness theorem in $\Lambda^p(\R^n)$ proved in~\cite{alves2026kolmogorov}. Since $\Lambda^p(\R^n)$ is complete, total boundedness and relative compactness coincide. We recall the result for
the reader's convenience.
\begin{theorem}[Kolmogorov--Riesz compactness theorem in $\Lambda^p(\R^n)$ \cite{alves2026kolmogorov}] \label{thm_KR_Lambda} 
A subset $\mathcal{F} \subseteq \Lambda^p(\R^n)$, $1 \leq p < \infty$, is totally bounded in~$\Lambda^p(\R^n)$ if, and only if, the following three conditions hold:
\begin{enumerate}[(i)]
\item For each $\varepsilon > 0$, there exists $r > 0$ such that 
\[
\int_{\R^n} \min(|f(x+y) - f(x)|,1)^p \, \dx < \varepsilon^p,
\]
for every $y \in \R^n$ with $|y| < r$ and all $f \in \mathcal{F}$.
\smallskip
\item For each $\varepsilon > 0$, there exists $R > 0$ such that 
\[
\int_{|x|>R} \min(|f(x)|,1)^p \, \dx < \varepsilon^p,
\]
for all $f \in \mathcal{F}$.
\smallskip
\item For each $\varepsilon > 0$, there exists $K > 0$ such that 
\[
\big| \{ |f| > K \} \big| < \varepsilon,
\]
for all $f \in \mathcal{F}$.
\end{enumerate}
\end{theorem}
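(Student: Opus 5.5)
The plan is to prove the two implications separately. Throughout I use the elementary inequality $\min(|a+b|,1)\le\min(|a|,1)+\min(|b|,1)$. It shows that $d(u,v)=\|\min(|u-v|,1)\|_p$ satisfies the triangle inequality, so all approximation steps can be chained in the metric $d$.

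\emph{Necessity.} Assume $\mathcal F$ is totally bounded. Given $\varepsilon>0$, pick a finite $\varepsilon$-net $\{g_1,\dots,g_m\}\subseteq\Lambda^p(\R^n)$. It then suffices to check that each single $g\in\Lambda^p(\R^n)$ satisfies (i)--(iii), and to run the usual $\varepsilon/3$-argument.
\begin{itemize}
\item[(ii)] For one $g$ this follows by dominated convergence, since $\min(|g|,1)^p\in L^1$.
\item[(iii)] For one $g$, note that $|\{|g|>K\}|\le\int\min(|g|,1)^p$ for $K\ge1$. This set has finite measure and decreases to a null set as $K\to\infty$. To transfer (iii) from the net to $\mathcal F$, use $\{|f|>2K\}\subseteq\{|g_i|>K\}\cup\{|f-g_i|>1\}$; the second set has measure at most $d(f,g_i)^p$.
\item[(i)] For one $g$, this requires continuity of translations in $\Lambda^p$. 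I would obtain it by approximating $g$ in $d$ by $\psi\in C_c(\R^n)$. First truncate, $g\mapsto T_K(g)\mathbf 1_{B_R}$, which is close to $g$ by (ii) and (iii). This function lies in $L^p$, so it can be approximated there by $C_c$ functions, and $d\le\|\cdot\|_p$. Then use uniform continuity of $\psi$.
\end{itemize}

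\emph{Sufficiency.} Assume (i)--(iii) and fix $\varepsilon>0$. Take $K\ge1$ from (iii) and $R$ from (ii). Since $\min(|f-T_K f|,1)\le\mathbf 1_{\{|f|>K\}}$, we have $d(f,T_Kf)^p<\varepsilon$. Also $d(T_Kf,T_Kf\,\mathbf 1_{B_R})^p\le\int_{|x|>R}\min(|f|,1)^p<\varepsilon^p$. So it suffices to show that the family $\{T_Kf\,\mathbf 1_{B_R}\}$ is totally bounded.

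The functions $h=T_Kf$ are bounded by $K$, and $T_K$ is $1$-Lipschitz. Hence $|h(x+y)-h(x)|\le\min(|f(x+y)-f(x)|,2K)\le 2K\min(|f(x+y)-f(x)|,1)$, because $K\ge1$. Condition (i) therefore upgrades to genuine $L^p$-equicontinuity of translations, in the form $\|h(\cdot+y)-h\|_p\le 2K\varepsilon$ for $|y|<r$.

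Now mollify with $\varphi_r$ supported in $B_r$. Jensen's inequality gives $\|h*\varphi_r-h\|_p\le\sup_{|y|<r}\|h(\cdot-y)-h\|_p$, which is small uniformly in $f$. Since $d(\cdot,\cdot)\le\|\cdot-\cdot\|_p$, the mollified family is uniformly close in $d$. The functions $h*\varphi_r$ restricted to $\overline{B_R}$ are bounded by $K$ and share the Lipschitz constant $K\|\nabla\varphi_r\|_1$. By Arzel\`a--Ascoli they form a totally bounded set in $C(\overline{B_R})$. Sup-norm closeness on $B_R$ controls $d$ via $d(u,v)^p\le|B_R|\,\|u-v\|_\infty^p$ for functions supported in $B_R$. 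Chaining these approximations produces a finite $C\varepsilon$-net for $\mathcal F$.

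The step I expect to need most care is the conversion of the truncated translation condition (i) into an honest $L^p$ estimate. This works only after cutting at level $K$ using (iii), which is exactly why (iii) is needed in $\Lambda^p$ but has no analogue in the classical theorem. The second delicate point is the translation-continuity of a single element of $\Lambda^p$ in the necessity part. Both are handled by the reduction to bounded functions supported in balls that is described above.
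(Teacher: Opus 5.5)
The paper does not prove Theorem~\ref{thm_KR_Lambda}. It imports the result, without proof, from the reference cited in its header, and only ever uses the ``if'' direction (in the proof of Theorem~\ref{thm_ARK}). So there is no in-paper argument to compare yours against. Judged on its own, your proof is correct and complete in both directions.

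\emph{Sufficiency.} You follow the Hanche-Olsen--Holden scheme for the classical Kolmogorov--Riesz theorem: approximate every $f$, uniformly in $f$, by an element of a totally bounded set. Your chain is $f\mapsto T_Kf\mapsto T_Kf\,\mathbf 1_{B_R}\mapsto (T_Kf*\varphi_r)\mathbf 1_{B_R}$, and the last family is handled by Arzel\`a--Ascoli together with $d(u,v)^p\le |B_R|\,\|u-v\|_\infty^p$. The one genuinely $\Lambda^p$-specific step is the one you single out. For $K\ge1$, the bound $\min(|a-b|,2K)\le 2K\min(|a-b|,1)$ converts hypothesis (i) into honest $L^p$-equicontinuity of translations of $T_Kf$. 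This is exactly why (iii) is needed, and why the rest of the classical mollification argument then goes through unchanged.

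\emph{Necessity.} The following steps are all sound:
\begin{itemize}
\item reducing to a single $g$ via a finite net, using translation invariance of $d$;
\item obtaining density of $C_c(\R^n)$ in $\Lambda^p(\R^n)$ through truncation, cut-off, and $d\le\|\cdot\|_p$;
\item the inclusion $\{|f|>2K\}\subseteq\{|g_i|>K\}\cup\{|f-g_i|>1\}$, valid for $K\ge1$.
\end{itemize}

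One cosmetic point: in the sufficiency part, take $K$ from (iii) at level $\varepsilon^p$ rather than $\varepsilon$. Then $d(f,T_Kf)<\varepsilon$, matching the scale of the other approximation errors. Nothing of substance changes.
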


\subsection{Lemmas on the Hardy--Littlewood maximal operator}
The proof of Theorem~\ref{thm_ARK} relies on~Theorem~\ref{thm_KR_Lambda} together with standard maximal function estimates for Sobolev functions; cf.\ \cite{hajlasz1996metric}. We record the precise form needed below and include proofs for the reader's convenience.

For a ball $B\subseteq\R^n$, we denote by $f_B$ the average of a function $f$
over $B$,
\[
f_B:=\frac{1}{|B|}\int_B f(z)\,\dz.
\]
The centered Hardy--Littlewood maximal operator is defined, for a locally
integrable function $h$, by
\begin{equation}\label{HL_operator}
M(h)(x):=\sup_{r>0}\frac{1}{|B(x,r)|}\int_{B(x,r)}|h(z)|\,\dz.
\end{equation}
We recall that the Hardy--Littlewood maximal operator $M$ maps the space $L^{q,\infty}(\R^n)$ to itself for $1 < q < \infty$; see \cite[Exercise~2.1.13]{grafakos2014classical}.

\begin{lemma}
If $f \in W^{1,p}(\R^n)$, then for a.e.\ $x \in \R^n$,
\begin{equation} \label{eq_bound_M_aux}
|f(x) - f_{B(x,r)}| \leq C \, r \, M(|\nabla f|)(x)
\end{equation}
for some positive constant $C$ depending only on $n$.
\end{lemma}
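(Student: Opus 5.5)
The plan is the standard pointwise Poincaré argument via a telescoping sum over dyadic balls. We first reduce to smooth $f$: for $f\in C^\infty_c(\R^n)$ we prove the inequality at every $x$. We then pass to general $f\in W^{1,p}(\R^n)$ by density, using that $f_{B(x,r)}$ depends continuously on $f$ in $L^1_{\rm loc}$, that a subsequence of the approximations converges a.e., and that $M$ is sublinear. The last point needs care: convergence $\nabla f_k\to\nabla f$ in $L^p$ gives $M(|\nabla f_k-\nabla f|)\to0$ in $L^p$ when $p>1$, and in weak $L^1$ when $p=1$; in either case this holds a.e.\ along a subsequence. Alternatively, one can argue directly at Lebesgue points of $f$ and avoid the density step.

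The core estimate is the classical Poincaré inequality on balls. For a ball $B$ of radius $\rho$,
\[
\frac{1}{|B|}\int_B|f-f_B|\,\dz\le C\rho\,\frac{1}{|B|}\int_B|\nabla f|\,\dz.
\]
This is standard. For smooth $f$ it follows by writing $f(z)-f(w)=\int_0^1\nabla f(w+t(z-w))\cdot(z-w)\,\dt$ and averaging over $z,w\in B$.

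Next, fix a Lebesgue point $x$ of $f$ and set $B_k:=B(x,2^{-k}r)$ for $k\ge0$. Since $f_{B_k}\to f(x)$, we have
\[
|f(x)-f_{B(x,r)}|\le\sum_{k\ge0}|f_{B_{k+1}}-f_{B_k}|.
\]
Because $B_{k+1}\subseteq B_k$ and $|B_k|=2^n|B_{k+1}|$, each term satisfies
\[
|f_{B_{k+1}}-f_{B_k}|\le\frac{1}{|B_{k+1}|}\int_{B_{k+1}}|f-f_{B_k}|\,\dz\le\frac{2^n}{|B_k|}\int_{B_k}|f-f_{B_k}|\,\dz.
\]
Applying Poincaré, this is at most $2^nC\,2^{-k}r$ times the average of $|\nabla f|$ over $B_k$, which is in turn at most $2^nC\,2^{-k}r\,M(|\nabla f|)(x)$. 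Summing the geometric series gives \eqref{eq_bound_M_aux} with a constant $2^{n+1}C$ depending only on $n$. This holds at every Lebesgue point of $f$, hence a.e., so no density argument is needed after all, provided the Poincaré inequality is known for $W^{1,1}_{\rm loc}$ functions (it is, by density on a fixed ball).

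The only step needing genuine justification is the Poincaré inequality with the $L^1$ average of the gradient on the right and the correct scaling $\rho$. This I would take as standard, or prove for smooth functions via the integral representation above and extend by density on the ball. The remaining steps are routine.
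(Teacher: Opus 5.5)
Your proposal is correct and follows the paper's proof: you telescope over the dyadic balls $B(x,2^{-k}r)$ at a Lebesgue point, use $|B_k|=2^n|B_{k+1}|$ together with the Poincar\'e inequality on each ball, bound each average of $|\nabla f|$ by $M(|\nabla f|)(x)$, and sum the geometric series to get the constant $2^{n+1}C_n$. You are also right that the density detour you sketch at the start is unnecessary, since the Lebesgue-point argument already gives the estimate almost everywhere.
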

\begin{proof}
Let $B_0 = B(x,r)$ and for each $k \in \N$ let $B_k = B(x, 2^{-k} r)$. For a Lebesgue point $x$ of $f$ we have $f_{B_k} \to f(x)$ as $k \to \infty$ and hence
\[|f(x) - f_{B_0}| = \left| \sum_{k=0}^\infty (f_{B_{k+1}} - f_{B_k}) \right| \leq \sum_{k=0}^\infty |f_{B_{k+1}} - f_{B_k}|.\] 
Since $B_{k+1} \subseteq B_k$ we have 
\begin{align*}
|f_{B_{k+1}} - f_{B_k}| & = \left|\frac{1}{|B_{k+1}|} \int_{B_{k+1}} f(z) - f_{B_k} \, \dz \right| \\
& \leq \frac{1}{|B_{k+1}|} \int_{B_{k+1}} |f(z) - f_{B_k}| \, \dz \\
& \leq \frac{2^n}{|B_{k}|} \int_{B_{k}} |f(z) - f_{B_k}| \, \dz.
\end{align*}
We now use the Poincar\'{e} inequality for a ball (see \cite{evans2010partial}) to deduce
\[\int_{B_{k}} |f(z) - f_{B_k}| \, \dz \leq C_n 2^{-k} r \int_{B_k} |\nabla f(z)| \, \dz \]
and hence
\begin{align*}
|f_{B_{k+1}} - f_{B_k}| &\leq C_n \frac{2^n}{|B_{k}|} 2^{-k} r \int_{B_k} |\nabla f(z)| \, \dz \\
&\leq C_n 2^n 2^{-k} r M(|\nabla f|)(x). 
\end{align*}
Consequently,
\begin{align*}
|f(x) - f_{B_0}| & \leq C_n 2^n r M(|\nabla f|)(x) \sum_{k=0}^\infty 2^{-k} \\
& = C_n 2^{n+1} r M(|\nabla f|)(x)
\end{align*}
which concludes the proof.
\end{proof}

\begin{lemma} \label{lem:bound_M}
If $f \in W^{1,p}(\R^n)$, then for every $y\in\R^n$,
\begin{equation} \label{eq_bound_M}
|f(x+y) - f(x)| \leq C \, |y| \, \big(M(|\nabla f|)(x+y) +  M(|\nabla f|)(x)\big)
\end{equation}
for a.e.\ $x \in \R^n$, where $C>0$ depends only on $n$.
\end{lemma}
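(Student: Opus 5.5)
The plan is the standard two-ball argument built on the preceding lemma, estimate~\eqref{eq_bound_M_aux}. Fix $y\in\R^n$; if $y=0$ there is nothing to prove, so assume $y\neq0$ and set $r:=|y|$. Write $x':=x+y$ and consider the balls $B(x,r)$ and $B(x',r)$. Both are contained in the ball $B:=B(x,2r)$, and $|B|=2^n|B(x,r)|=2^n|B(x',r)|$. The triangle inequality then gives
\[
|f(x')-f(x)|\le |f(x')-f_{B(x',r)}|+|f_{B(x',r)}-f_B|+|f_B-f_{B(x,r)}|+|f_{B(x,r)}-f(x)|.
\]

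The first and last terms are handled directly by~\eqref{eq_bound_M_aux}, applied at $x'$ and at $x$ with radius $r=|y|$. This gives bounds $C r M(|\nabla f|)(x')$ and $C r M(|\nabla f|)(x)$.

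For the middle terms, I would argue as in the previous proof:
\[
|f_{B(x,r)}-f_B|\le \frac{1}{|B(x,r)|}\int_{B(x,r)}|f-f_B|\le \frac{2^n}{|B|}\int_B|f-f_B|.
\]
The Poincaré inequality on $B$ bounds this by $C_n 2^n(2r)\,|B|^{-1}\int_B|\nabla f|$. Since $B=B(x,2r)$ is centered at $x$, this quantity is at most $C r M(|\nabla f|)(x)$.

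The term $|f_{B(x',r)}-f_B|$ is bounded in exactly the same way, giving $C r M(|\nabla f|)(x)$ again. Alternatively, one can use $B(x',2r)$ for that term: then $B(x,r)$ and $B(x',r)$ are compared through a ball centered at each point, giving a bound in $M(|\nabla f|)(x')$. Either choice yields the symmetric right-hand side of~\eqref{eq_bound_M} after summing, because extra maximal-function terms only enlarge the bound.

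The only delicate point is the almost-everywhere quantifier. Estimate~\eqref{eq_bound_M_aux} holds for $x$ outside a null set $N$, namely the non-Lebesgue points of $f$. For fixed $y$, the condition at $x+y$ holds for $x\notin N-y$, which is again a null set. Hence, for every fixed $y$, the inequality holds for a.e.\ $x$, with $C$ depending only on $n$. I expect no real obstacle beyond this bookkeeping. The main point of care is making sure the containment $B(x',r)\subseteq B(x,2r)$, which holds since $|x'-x|=r$, and the measure ratio $2^n$ are used correctly.
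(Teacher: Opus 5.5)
Your proof is correct and follows essentially the same route as the paper: apply the preceding pointwise estimate \eqref{eq_bound_M_aux} at both $x$ and $x+y$, link the two ball averages through a larger common ball via the Poincar\'e inequality, and handle the a.e.\ quantifier by discarding $N\cup(N-y)$. The only difference is the choice of balls. The paper compares $f(x+y)$ and $f(x)$ through the averages over $B(x+y,2|y|)\subseteq B(x,3|y|)$, using three terms. You go through the averages over $B(x+y,|y|)$, $B(x,2|y|)$ and $B(x,|y|)$, using four terms, and this only changes the constant.
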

\begin{proof}
Fix $y\in\R^n$. The case $y=0$ is trivial, so assume $y\neq 0$. Let $N\subseteq\R^n$
be the null set of points that are not Lebesgue points of $f$. Then, for every
\[
x\in \R^n\setminus \bigl(N\cup (N-y)\bigr),
\]
both $x$ and $x+y$ are Lebesgue points of $f$. Set
\[
z:=x+y,
\qquad
r:=|y|.
\]
Let $B = B(z,2r)$ and $\widehat{B} = B(x,3r)$. Note that $x \in B$ and
$B \subseteq \widehat{B}$. By the previous lemma, we have
\begin{align*}
|f(z) - f(x)|&  \leq |f(z) - f_B| + |f_B - f(x)| \\
& \leq C 2r M(|\nabla f|)(z) + |f(x) - f_{\widehat{B}}| + |f_{\widehat{B}} -f_B| \\
& \leq C 2r M(|\nabla f|)(z) + C 3r M(|\nabla f|)(x) + |f_{\widehat{B}} -f_B|.
\end{align*}
We now estimate the last term on the right-hand side using the Poincar\'e
inequality for a ball:
\begin{align*}
|f_{\widehat{B}} -f_B|
& = \left|\frac{1}{|B|} \int_B (f_{\widehat{B}} - f(w)) \, \rd w  \right|  \\
& \leq \frac{1}{|B|} \int_B |f_{\widehat{B}} - f(w)| \, \rd w \\
& \leq \frac{1}{|B|} \int_{\widehat{B}} |f_{\widehat{B}} - f(w)| \, \rd w \\
& \leq \frac{C_n}{|B|} 3r \int_{\widehat{B}} |\nabla f(w)| \, \rd w \\
& = C_n \frac{3^{n+1}}{2^n} r \frac{1}{|\widehat{B}|} \int_{\widehat{B}} |\nabla f(w)| \, \rd w \\
& \leq C_n \frac{3^{n+1}}{2^n} r M(|\nabla f|)(x).
\end{align*}
Combining the previous estimates and recalling that $z=x+y$ and $r=|y|$, we obtain
\[
|f(x+y)-f(x)|
\leq C\,|y|\big(M(|\nabla f|)(x+y)+M(|\nabla f|)(x)\big)
\]
for every $x\in \R^n\setminus \bigl(N\cup (N-y)\bigr)$. Since
$N\cup (N-y)$ has measure zero, the proof is complete.
\end{proof}

\subsection{Proof of Theorem~\ref{thm_ARK}}

We show that $\mathcal{F}$ satisfies the conditions of Theorem~\ref{thm_KR_Lambda}. Fix $\eps > 0$ and $f \in \mathcal{F}$.

 We first observe that for $K > C/\eps^{1/p}$, using the $L^p$-bound in~\eqref{eq:ARK1} we have 
\[\big| \{|f| > K \} \big| = \frac{1}{K^p} \int_{|f| > K} K^p \, \dx \leq \frac{1}{K^p} \int_{\R^n} |f|^p \, \dx \leq \frac{C^p}{K^p} < \eps, \]
so $\mathcal{F}$ satisfies the third condition of Theorem~\ref{thm_KR_Lambda}.

Given that the second condition of~Theorem~\ref{thm_KR_Lambda} is already part of the current assumptions, it remains to prove that the first condition holds, that is, we need to find $r = r(\eps)>0$ so that
\[\int_{\R^n} \min(|f(x+y) - f(x)|,1)^p \, \dx < \eps^p, \]
for $|y| < r$. By hypothesis, one can pick $R=R(\eps) > 0$ such that 
\begin{equation} \label{eq:thproof_aux1}
\int_{|x| > R/2} \min(|f(x)|,1)^p \, \dx < \frac{\eps^p}{2^{p+1}}.
\end{equation}
Using this $R$, we split the integral 
\begin{align*}
\int_{\R^n} &\min(|f(x+y) - f(x)|,1)^p \, \dx \\
 = \ & \int_{|x|>R} \min(|f(x+y) - f(x)|,1)^p \, \dx  \\
 & + \int_{|x|\leq R} \min(|f(x+y) - f(x)|,1)^p \, \dx  \\
 \leq \ & 2^{p-1} \int_{|x|>R} \min(|f(x+y)|,1)^p \, \dx + 2^{p-1} \int_{|x|>R} \min(|f(x)|,1)^p \, \dx\\
 & + \int_{|x|\leq R} \min(|f(x+y) - f(x)|,1)^p \, \dx
\end{align*}
where in the last step we used the inequalities $\min(|a+b|,1) \leq \min(|a|,1) + \min(|b|,1)$ and $(|a|+|b|)^p \leq 2^{p-1}(|a|^p+|b|^p)$.\par 
Using \eqref{eq:thproof_aux1} we deduce
\[\int_{|x|>R} \min(|f(x)|,1)^p \, \dx \leq \int_{|x|>R/2} \min(|f(x)|,1)^p \, \dx < \frac{\eps^p}{2^{p+1}}, \]
and, for $|y| < R/2$, the change of variables $x+y=z$ gives
\[\int_{|x|>R} \min(|f(x+y)|,1)^p \, \dx \leq \int_{|z|>R/2} \min(|f(z)|,1)^p \, \dz < \frac{\eps^p}{2^{p+1}}. \]
Therefore, for $|y| < R/2$ we have
\begin{equation} \label{eq:thproof_aux2}
\int_{\R^n} \min(|f(x+y) - f(x)|,1)^p \, \dx < \frac{\eps^p}{2} + \int_{|x|\leq R} \min(|f(x+y) - f(x)|,1)^p \, \dx.
\end{equation}
\par
For $\lambda > 0$ and $y \in \R^n$, we consider the sets $E_\lambda$ and $S_\lambda(y)$ given by
\[E_\lambda = \big\{x \in \R^n :\, M(|\nabla f|)(x) > \lambda \big\}, \qquad S_\lambda(y) = E_\lambda \cup (E_\lambda - y). \]
Observe that 
\begin{equation} \label{eq:proof_RK_aux1}
\begin{split}
|S_\lambda(y)| & \leq 2 |E_\lambda| \\
& \leq 2\lambda^{-q} \|M(|\nabla f|) \|_{q,\infty}^q \\
&\leq C_{n,q} \lambda^{-q} \|\nabla f\|_{q,\infty}^q \\
& \leq C_1 \lambda^{-q}, 
\end{split}
\end{equation}
for some positive constant $C_1$ depending only on $n$ and $q$, where we used the fact that $M$ maps $L^{q,\infty}(\R^n)$ to itself and the hypothesis that the gradients of functions in $\mathcal{F}$ are uniformly bounded in $L^{q,\infty}(\R^n)$.

Moreover, if $x \in S_\lambda(y)^c$, then $x+y,x \in E_\lambda^c$ and hence,
for a.e.\ $x\in S_\lambda(y)^c$, Lemma~\ref{lem:bound_M} gives
\begin{equation} \label{eq:proof_RK_aux2}
\begin{split}
|f(x+y) - f(x)| & \leq C_n |y| \big(M(|\nabla f|)(x+y) + M(|\nabla f|)(x) \big) \\
& \leq 2 C_n \lambda |y|.
\end{split}
\end{equation}

From~\eqref{eq:proof_RK_aux1}--\eqref{eq:proof_RK_aux2} we deduce
\begin{align*}
\int_{|x|\leq R} & \min(|f(x+y) - f(x)|,1)^p \, \dx \\
 = \ &  \int_{B_R \cap S_\lambda(y)}  \min(|f(x+y) - f(x)|,1)^p \, \dx \\ & + \int_{B_R \cap S_\lambda^c(y)}  \min(|f(x+y) - f(x)|,1)^p \, \dx \\
  \leq \ &  |S_\lambda(y)| + \int_{B_R \cap S_\lambda^c(y)}  |f(x+y) - f(x)|^p \, \dx \\
  \leq \ & C_1 \lambda^{-q} + C_2 R^n \lambda^p |y|^p, 
\end{align*}
for some positive constant $C_2$ depending only on $n$ and $p$. Now, we first choose $\lambda$ large enough so that $C_1 \lambda^{-q} < \eps^p/4$, and then we choose $r_0$ small enough to guarantee that $C_2 R^n \lambda^p r_0^p < \eps^p/4$. Thus, for $r = \min\{r_0, R/2\}$ and $|y| < r$ it follows that
\[\int_{\R^n} \min(|f(x+y) - f(x)|,1)^p \, \dx < \frac{\eps^p}{2} + C_1 \lambda^{-q} + C_2 R^n \lambda^p r^p < \eps^p, \]
which concludes the proof. \qed

\section{Approximate solutions} \label{sec:approximate}
Throughout the next sections, we assume that $V\in L^\infty_{\rm loc}(\R^n)$ and $V(x)\ge1$ for a.e.\ $x\in\R^n$. The latter condition could be replaced by a lower bound by some fixed constant $\nu_0>0$; for simplicity, we take $\nu_0=1$.

Let $\{f_k\}_{k \in \N}\subseteq L^1(\R^n)\cap L^{p'}(\R^n)$ be such that
\[
\sup_{k \in \N}\|f_k\|_1\le \|f\|_1
\]
and
\[
f_k\to f\qquad\text{in }L^1(\R^n) \qquad \text{as } k \to \infty.
\]
A canonical choice is
\[
f_k=T_k(f)\chi_{B_k}.
\]
\par
Recall the energy space $\X$ introduced in \eqref{eq:spaceX}, endowed with the
norm \eqref{eq:normX}. Since $V\ge 1$ a.e., we have
\[
  \|v\|_{p}^p
  \le \int_{\R^n} V|v|^p\,\dx
  \le \|v\|_{\X}^p,
\]
and therefore
\[
  \|v\|_{W^{1,p}(\R^n)}^p
  := \int_{\R^n} |\nabla v|^p\,\dx + \int_{\R^n}|v|^p\,\dx
  \le \|v\|_{\X}^p.
\]
In particular, $\X$ is continuously embedded into $W^{1,p}(\R^n)$ and into $L^p(\R^n)$.

\subsection{Weak compactness in $\X$}

\begin{lemma} \label{lem:weakcompactX}
Suppose $\{v_k\}_{k \in \N}$ is bounded in $\X$. Then, there exist a subsequence $\{v_{k_j} \}_{j \in \N}$ and a function $v \in \X$ such that 
\[
v_{k_j} \rightharpoonup v \qquad\text{weakly in }L^p(\R^n) \qquad \text{as } j \to \infty,
\]
and
\[\| v\|_{\X} \leq \liminf_{j \to \infty} \| v_{k_j}\|_{\X}.\]
\end{lemma}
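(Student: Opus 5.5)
The plan is to use reflexivity of $L^p$ for $1<p<\infty$ together with weak lower semicontinuity of convex integral functionals. Let $M:=\sup_k\|v_k\|_{\X}<\infty$. Since $\X$ embeds continuously into $W^{1,p}(\R^n)$, the sequences $\{v_k\}$ and $\{\nabla v_k\}$ are bounded in $L^p(\R^n)$ and $L^p(\R^n;\R^n)$, respectively. Since $V\ge1$, the sequence $\{V^{1/p}v_k\}$ is also bounded in $L^p(\R^n)$, with $\|V^{1/p}v_k\|_p\le M$.

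First, I pass to a subsequence along which $\liminf_{k}\|v_k\|_{\X}$ is attained as a limit. By reflexivity, I then extract a further subsequence $\{v_{k_j}\}$ and functions $v\in L^p(\R^n)$, $G\in L^p(\R^n;\R^n)$, $w\in L^p(\R^n)$ such that
\[
v_{k_j}\rightharpoonup v,\qquad \nabla v_{k_j}\rightharpoonup G,\qquad V^{1/p}v_{k_j}\rightharpoonup w
\]
weakly in the respective $L^p$ spaces.

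Next I identify the limits. For $\varphi\in C_c^\infty(\R^n)$, pass to the limit in $\int v_{k_j}\partial_i\varphi=-\int \partial_i v_{k_j}\varphi$. This gives $G=\nabla v$ in the distributional sense, so $v\in W^{1,p}(\R^n)$.

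For $w$, take $\psi\in L^{p'}(\R^n)$ supported in a ball $B$. Since $V\in L^\infty_{\rm loc}$, the function $V^{1/p}\psi$ lies in $L^{p'}(\R^n)$. Hence $\int V^{1/p}v_{k_j}\psi\to\int V^{1/p}v\psi$, while by definition of $w$ the same integrals tend to $\int w\psi$. It follows that $w=V^{1/p}v$ a.e. on every ball, hence a.e. on $\R^n$.

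Finally, I use weak lower semicontinuity of the $L^p$ norm, which holds on any Banach space:
\[
\int|\nabla v|^p\le\liminf_j\int|\nabla v_{k_j}|^p,\qquad \int V|v|^p=\|w\|_p^p\le\liminf_j\int V|v_{k_j}|^p.
\]
These give $v\in\X$. Adding the two inequalities and using $\liminf a_j+\liminf b_j\le\liminf(a_j+b_j)$, then taking $p$-th roots, yields $\|v\|_{\X}\le\lim_j\|v_{k_j}\|_{\X}=\liminf_k\|v_k\|_{\X}$. Along the chosen subsequence this is in particular $\liminf_j\|v_{k_j}\|_{\X}$, which is the claimed inequality.

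The only delicate point is identifying the weak limit of $V^{1/p}v_{k_j}$, because $V$ may be unbounded and multiplication by $V^{1/p}$ is not a bounded operator on $L^p(\R^n)$. I handle this by testing only against compactly supported functions, where the local boundedness of $V$ applies. An alternative is to apply Fatou-type lower semicontinuity on each ball $B_R$ and then let $R\to\infty$ by monotone convergence.

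The restriction $1<p<\infty$ is needed for reflexivity. It is consistent with the standing assumption of the paper.
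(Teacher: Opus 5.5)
Your proposal is correct and follows essentially the same route as the paper: reflexivity to extract weak limits of $v_{k_j}$, $\nabla v_{k_j}$ and $V^{1/p}v_{k_j}$, identification of these limits by testing against compactly supported functions (using $V\in L^\infty_{\rm loc}$), and weak lower semicontinuity of the $L^p$ norms. Your preliminary passage to a subsequence realizing $\liminf_k\|v_k\|_{\X}$ is not needed for the statement but is harmless, and it yields the slightly stronger bound $\|v\|_{\X}\le\liminf_k\|v_k\|_{\X}$ for the original sequence.
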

\begin{proof}
Let $\{v_k\}_{k \in \N}$ be bounded in $\X$. Then, the sequences $\{v_k\}_{k \in \N}$, $\{ V^{1/p} \, v_k\}_{k \in \N}$ are bounded in $L^p(\R^n)$, and $\{\nabla v_k \}_{k \in \N}$ is bounded in $L^p(\R^n;\R^n)$. By reflexivity of $L^p$ spaces for $1 < p < \infty$,
after passing to a subsequence $\{v_{k_j} \}_{j \in \N}$ we may assume that there exist
\[
v\in L^p(\R^n),\qquad G\in L^p(\R^n;\R^n),\qquad z\in L^p(\R^n),
\]
such that
\[
v_{k_j} \rightharpoonup v \qquad\text{weakly in }L^p(\R^n),
\]
\[
\nabla v_{k_j} \rightharpoonup G \qquad\text{weakly in }L^p(\R^n;\R^n),
\]
\[
V^{1/p}\,v_{k_j} \rightharpoonup z \qquad\text{weakly in }L^p(\R^n).
\]
We claim that $v\in \X$, $\nabla v=G$, and $z=V^{1/p}\,v$. Indeed, for $\phi\in C_c^\infty(\R^n)$ and each $i\in\{1,\dots,n\}$,
\[
\int_{\R^n} v_{k_j}\,\partial_i\phi\,\dx
=
-\int_{\R^n} \partial_i v_{k_j}\,\phi\,\dx.
\]
Passing to the limit yields
\[
\int_{\R^n} v\,\partial_i\phi\,\dx
=
-\int_{\R^n} G_i\,\phi\,\dx,
\]
so $v\in W^{1,p}(\R^n)$ with $\partial_i v=G_i$. Next, since $V\in L^\infty_{\loc}(\R^n)$ we have
$V^{1/p} \, \phi\in L^{p'}(\R^n)$, and therefore
\begin{align*}
\int_{\R^n} z\,\phi\,\dx & =
\lim_{j\to\infty}\int_{\R^n} V^{1/p}\,v_{k_j}\,\phi\,\dx \\
& =
\lim_{j\to\infty}\int_{\R^n} v_{k_j}\,(V^{1/p} \, \phi)\,\dx \\
& =
\int_{\R^n} v\,V^{1/p}\phi\,\dx.
\end{align*}
Hence $z=V^{1/p} \, v$ a.e.\ in $\R^n$. Since
$z\in L^p(\R^n)$, it follows that
\[
\int_{\R^n}V \,|v|^p\,\dx
=
\int_{\R^n}|z|^p\,\dx<\infty.
\]
Thus $v\in \X$.

By weak lower semicontinuity in $L^p$,
\[
\int_{\R^n}|\nabla v|^p\,\dx
\le
\liminf_{j\to\infty}\int_{\R^n}|\nabla v_{k_j}|^p\,\dx,
\]
and
\begin{align*}
\int_{\R^n}V \, |v|^p\,\dx
& =
\int_{\R^n}|V^{1/p} \, v|^p\,\dx \\
& \le
\liminf_{j\to\infty}\int_{\R^n}|V^{1/p} \, v_{k_j}|^p\,\dx \\
& =
\liminf_{j\to\infty}\int_{\R^n}V \, |v_{k_j}|^p\,\dx,
\end{align*}
from which it follows that 
\[\| v\|_{\X} \leq \liminf_{j \to \infty} \| v_{k_j}\|_{\X}.\]
\end{proof}

\subsection{Weak energy solutions}

The next lemma provides existence and uniqueness of \emph{weak energy solutions} to~\eqref{eq:p-Schrodinger} with datum~$f_k \in L^{p'}(\R^n)$.
\begin{lemma}\label{lem:approx_uk}
For each $k\in\N$ there exists a unique $u_k\in \X$ such that
\begin{equation}\label{eq:exist_approx_uk}
  \int_{\R^n} |\nabla u_k|^{p-2} \nabla u_k\cdot \nabla\varphi\,\dx
  + \int_{\R^n} V\,|u_k|^{p-2}u_k\,\varphi\,\dx
  = \int_{\R^n} f_k\,\varphi\,\dx,
\end{equation}
for all $\varphi \in \X$. In particular, since $C_c^\infty(\R^n)\subseteq \X$, the function $u_k$ is a
distributional solution of~\eqref{eq:p-Schrodinger} with right-hand side $f_k$.
\end{lemma}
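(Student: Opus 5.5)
The plan is to obtain $u_k$ by the direct method of the calculus of variations applied to
\[
J_k(v):=\frac1p\int_{\R^n}|\nabla v|^p\,\dx+\frac1p\int_{\R^n}V|v|^p\,\dx-\int_{\R^n}f_k\,v\,\dx
=\frac1p\|v\|_{\X}^p-\int_{\R^n}f_k v\,\dx,
\qquad v\in\X .
\]
First, $J_k$ is well defined and coercive on $\X$. Since $f_k\in L^{p'}(\R^n)$ and $\|v\|_p\le\|v\|_{\X}$, Hölder's inequality gives
\[
\Bigl|\int f_k v\Bigr|\le\|f_k\|_{p'}\|v\|_{\X},
\qquad\text{hence}\qquad
J_k(v)\ge\tfrac1p\|v\|_{\X}^p-\|f_k\|_{p'}\|v\|_{\X}.
\]
It follows that $m:=\inf_{\X}J_k>-\infty$ and that every minimizing sequence is bounded in $\X$.

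Second, I would pass to the limit in a minimizing sequence using Lemma~\ref{lem:weakcompactX}. That lemma gives a subsequence $v_{k_j}\rightharpoonup v$ weakly in $L^p(\R^n)$ with $v\in\X$ and $\|v\|_{\X}\le\liminf_j\|v_{k_j}\|_{\X}$. The linear term is continuous under this convergence, because $f_k\in L^{p'}=(L^p)'$. Therefore $J_k(v)\le\liminf_j J_k(v_{k_j})=m$, and $v=:u_k$ is a minimizer.

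Third, I would derive the Euler--Lagrange equation \eqref{eq:exist_approx_uk}. Fix $\varphi\in\X$ and consider $t\mapsto J_k(u_k+t\varphi)$. The pointwise inequality
\[
\bigl||a+tb|^p-|a|^p\bigr|\le p\,|t|\,|b|\,(|a|+|b|)^{p-1}\quad (|t|\le1)
\]
gives an integrable majorant, namely $|\nabla\varphi|(|\nabla u_k|+|\nabla\varphi|)^{p-1}$ and $V|\varphi|(|u_k|+|\varphi|)^{p-1}$. The second is integrable by Hölder's inequality with respect to the measure $V\,\dx$, since both $u_k$ and $\varphi$ lie in $L^p(V\dx)$. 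Dominated convergence then allows differentiation under the integral. Setting the derivative at $t=0$ equal to zero yields \eqref{eq:exist_approx_uk}.

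The step needing the most care is uniqueness; I will handle it by strict monotonicity. Suppose $u,w\in\X$ both satisfy \eqref{eq:exist_approx_uk}. Subtract the two identities and test with $\varphi=u-w\in\X$:
\[
\int\bigl(|\nabla u|^{p-2}\nabla u-|\nabla w|^{p-2}\nabla w\bigr)\cdot(\nabla u-\nabla w)\,\dx
+\int V\bigl(|u|^{p-2}u-|w|^{p-2}w\bigr)(u-w)\,\dx=0 .
\]
Both integrands are nonnegative. Indeed, $a\mapsto|a|^{p-2}a$ is strictly monotone for every $1<p<\infty$, as it is the gradient of the strictly convex function $|a|^p/p$. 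Since $V\ge1$, the second integrand vanishes a.e.\ only if $u=w$ a.e. Alternatively, uniqueness follows from strict convexity of $J_k$, because any weak solution is a minimizer by convexity.

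Finally, $C_c^\infty(\R^n)\subseteq\X$ holds because $V\in L^\infty_{\loc}$. Taking $\varphi\in C_c^\infty(\R^n)$ in \eqref{eq:exist_approx_uk} shows that $u_k$ is a distributional solution with right-hand side $f_k$.
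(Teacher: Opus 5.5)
Your proposal is correct and follows the paper's proof essentially step for step: the direct method applied to $J_k$ using Lemma~\ref{lem:weakcompactX}, the Euler--Lagrange equation obtained by differentiating $t\mapsto J_k(u_k+t\varphi)$ under dominated convergence, and uniqueness by testing the difference of the two identities with $u_k-w_k$ and using monotonicity together with $V\ge1$. The only differences are cosmetic: you bound the linear term directly by $\|f_k\|_{p'}\|v\|_{\X}$ instead of using Young's inequality, and you dominate the difference quotients with a mean-value bound instead of the integral form of the fundamental theorem of calculus.
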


\begin{proof}
Define
\begin{equation} \label{eq:functional_Jk}
  J_k(v)
  =\frac1p\int_{\R^n}|\nabla v|^p\,\dx
   +\frac1p\int_{\R^n}V\,|v|^p\,\dx
   -\int_{\R^n} f_k \, v\,\dx,
  \qquad v\in \X.
\end{equation}
This functional is well-defined on $\X$: the first term is finite since
$v\in W^{1,p}(\R^n)$, the second term is finite by definition of $\X$, and the last
term is finite by H\"older's inequality since $f_k\in L^{p'}(\R^n)$ and
$v\in L^p(\R^n)$.

Moreover, by H\"older's inequality and the estimate $\|v\|_{p}\le \|V^{1/p} \,v \|_p$,
\[
  \left|\int_{\R^n} f_k v\,\dx\right|
  \le \|f_k\|_{p'}\|v\|_{p}
  \le \|f_k\|_{p'}
      \left(\int_{\R^n}V\,|v|^p\,\dx\right)^{1/p}.
\]
Applying Young's inequality gives
\[
  \left|\int_{\R^n} f_k v\,\dx\right|
  \le \frac{1}{2p}\int_{\R^n}V\,|v|^p\,\dx
      + C_p\|f_k\|_{p'}^{p'}
\]
for some constant $C_p>0$ depending only on $p$. Hence
\begin{align*}
 J_k(v) & \ge \frac1p\int_{\R^n}|\nabla v|^p\,\dx
     +\frac1{2p}\int_{\R^n}V \,|v|^p\,\dx
     -C_p\|f_k\|_{p'}^{p'} \\
     & \geq \frac1{2p} \|v \|_{\X}^p -C_p\|f_k\|_{p'}^{p'} \\ 
     & \geq -C_p\|f_k\|_{p'}^{p'}.
\end{align*}
Therefore, $J_k$ is coercive and bounded from below on $\X$. In particular, \[m \coloneqq \inf_{\X} J_k \in \R. \] 
By definition of infimum, for each $j \in \N$ there exists some $v_j \in \X$ such that 
\[m \leq J_k(v_j) < m + \frac1j \leq m + 1. \]
The sequence $\{v_j\}_{j\in\N}$ is a minimizing sequence for $J_k$. By coercivity, it follows that
\[\|v_j \|_{\X}^p \leq 2p\big(m + 1 + C_p\|f_k\|_{p'}^{p'}\big), \]
so $\{v_j\}_{j \in \N}$ is bounded in $\X$. By Lemma~\ref{lem:weakcompactX}, there exist a subsequence, still denoted by $\{v_j\}_{j\in\N}$, and a function $u_k \in \X$ such that 
\[
v_j \rightharpoonup u_k \qquad\text{weakly in }L^p(\R^n) \qquad \text{as } j \to \infty,
\]
and
\[\| u_k\|_{\X} \leq \liminf_{j \to \infty} \| v_j\|_{\X}.\]
Moreover, since $f_k\in L^{p'}(\R^n)$ and $v_j\rightharpoonup u_k$ weakly in $L^p(\R^n)$ as $j \to \infty$,
\[
\int_{\R^n} f_k \,v_j\,\dx \to \int_{\R^n} f_k \, u_k\,\dx.
\]
Therefore
\begin{align*}
J_k(u_k)  & = \frac{1}{p} \|u_k \|_{\X}^p - \int_{\R^n} f_k \, u_k \, \dx \\
& \leq \liminf_{j \to \infty} \frac{1}{p} \|v_j \|_{\X}^p - \limsup_{j \to \infty} \int_{\R^n} f_k \, u_k \, \dx \\
& \leq \liminf_{j\to\infty} J_k(v_j) \\
&=\inf_{\X} J_k,
\end{align*}
and since $u_k\in \X$, we conclude that $J_k(u_k)=\inf_{\X} J_k$.

We now derive the Euler--Lagrange equation. Fix $\varphi\in \X$ and define
\[
g(t):=J_k(u_k+t\varphi),\qquad t\in\R.
\]
Since $u_k$ minimizes $J_k$ on $\X$, the function $g$ has a minimum at $t=0$. We claim that $g$ is differentiable at $t=0$. Write
\[
g(t)=A(t)+B(t)-C(t),
\]
where
\[
A(t)=\frac1p\int_{\R^n}|\nabla u_k+t\nabla\varphi|^p\,\dx,
\qquad
B(t)=\frac1p\int_{\R^n}V \, |u_k+t\varphi|^p\,\dx,
\]
and
\[
C(t)=\int_{\R^n}f_k\,(u_k+t\varphi)\,\dx.
\]
For the first term, using the $C^1$ function $\xi \mapsto |\xi|^p/p$ on $\R^n$, by the fundamental theorem of calculus we get
\[
\frac{A(t)-A(0)}{t}
=
\int_{\R^n}\int_0^1
|\nabla u_k+s t\nabla\varphi|^{p-2}
(\nabla u_k+s t\nabla\varphi)\cdot\nabla\varphi\,\ds\,\dx.
\]
As $t\to0$, the integrand converges pointwise to
$|\nabla u_k|^{p-2}\nabla u_k\cdot\nabla\varphi$. Moreover, for $|t|\le1$,
\[
\bigl|
|\nabla u_k+s t\nabla\varphi|^{p-2}
(\nabla u_k+s t\nabla\varphi)\cdot\nabla\varphi
\bigr|
\le
C_p'\Bigl(|\nabla u_k|^{p-1}|\nabla\varphi|+|\nabla\varphi|^p\Bigr).
\]
Viewed as a function of $(x,s)\in \R^n\times(0,1)$ independent of $s$, the right-hand side
belongs to $L^1(\R^n\times(0,1))$ by H\"older's inequality. Hence, by dominated convergence,
\[
A'(0)=\int_{\R^n}|\nabla u_k|^{p-2}\nabla u_k\cdot\nabla\varphi\,\dx.
\]
Similarly, using the $C^1$ function $s \mapsto |s|^p/p$ on $\R$, we obtain
\[
\frac{B(t)-B(0)}{t}
=
\int_{\R^n}\int_0^1
V\,|u_k+s t\varphi|^{p-2}\,(u_k+s t\varphi) \, \varphi\,\ds\,\dx,
\]
and for $|t|\le1$,
\[
V \, |u_k+s t\varphi|^{p-1} \, |\varphi|
\le
C_p'\,V \Bigl(|u_k|^{p-1}|\varphi|+|\varphi|^p\Bigr).
\]
The right-hand side belongs to $L^1(\R^n)$ because
\begin{align*}
\int_{\R^n}V\,|u_k|^{p-1} \,|\varphi|\,\dx
& = \int_{\R^n}(V^{1/p}\,|u_k|)^{p-1} \,(V^{1/p} |\varphi|)\,\dx \\
& \le
\|V^{1/p}u_k\|_p^{p-1}\|V^{1/p}\varphi\|_p
<\infty,
\end{align*}
and $\int_{\R^n}V|\varphi|^p\,\dx<\infty$. Therefore,
\[
B'(0)=\int_{\R^n}V \, |u_k|^{p-2}\,u_k\,\varphi\,\dx.
\]
Finally,
\[
C'(0)=\int_{\R^n}f_k\,\varphi\,\dx.
\]
Hence
\[
g'(0)
=
\int_{\R^n}|\nabla u_k|^{p-2}\nabla u_k\cdot\nabla\varphi\,\dx
+\int_{\R^n}V \, |u_k|^{p-2}\,u_k\,\varphi\,\dx
-\int_{\R^n}f_k \,\varphi\,\dx.
\]
Since $g$ has a minimum at $0$, we have $g'(0)=0$ and thus
\[
\int_{\R^n}|\nabla u_k|^{p-2}\nabla u_k\cdot\nabla\varphi\,\dx
+\int_{\R^n}V(x)|u_k|^{p-2}u_k\,\varphi\,\dx
=
\int_{\R^n}f_k \, \varphi\,\dx.
\]

To prove uniqueness, let $u_k,w_k\in X$ be two solutions of
\eqref{eq:exist_approx_uk}. Subtracting the two weak formulations and testing with
$\varphi=u_k-w_k\in X$, we obtain
\begin{align*}
\int_{\R^n}
&\Big(|\nabla u_k|^{p-2}\nabla u_k-|\nabla w_k|^{p-2}\nabla w_k\Big)
\cdot(\nabla u_k-\nabla w_k)\,\dx
\\
&\quad
+\int_{\R^n}
V\Big(|u_k|^{p-2}u_k-|w_k|^{p-2}w_k\Big)(u_k-w_k)\,\dx
=0.
\end{align*}
Since the maps $\xi\mapsto |\xi|^{p-2}\,\xi$ on $\R^n$ and
$s\mapsto |s|^{p-2}\,s$ on $\R$ are monotone, both integrands are a.e.\ nonnegative.
Therefore, both integrals must vanish. In particular,
\[
\int_{\R^n}
V\Big(|u_k|^{p-2}u_k-|w_k|^{p-2}w_k\Big)(u_k-w_k)\,\dx=0.
\]
Since $V\ge 1$ a.e., we have
\[
0\le\Big(|u_k|^{p-2}u_k-|w_k|^{p-2}w_k\Big)(u_k-w_k) 
\le
 V\Big(|u_k|^{p-2}u_k-|w_k|^{p-2}w_k\Big)(u_k-w_k)
\]
for a.e.\ $x\in\R^n$. Hence
\[
\int_{\R^n}
\Big(|u_k|^{p-2}u_k-|w_k|^{p-2}w_k\Big)(u_k-w_k)\,\dx=0,
\]
and since the integrand is nonnegative, it follows that
\[
\Big(|u_k(x)|^{p-2}u_k(x)-|w_k(x)|^{p-2}w_k(x)\Big)(u_k(x)-w_k(x))=0
\quad\text{for a.e.\ }x\in\R^n.
\]
As $s\mapsto |s|^{p-2}s$ is strictly increasing on $\R$, we conclude that
$u_k=w_k$ a.e.\ in $\R^n$.
\end{proof}

\subsection{Truncation estimates}
For $t > 0$ we consider the truncations $T_t(u_k)$, where $u_k \in \X$ solves~\eqref{eq:exist_approx_uk}. Since $T_t:\R\to\R$ is Lipschitz, we have
$T_t(u_k)\in W^{1,p}(\R^n)$ with
\[
\nabla T_t(u_k)=T_t'(u_k)\nabla u_k \qquad\text{a.e. in }\R^n;
\]
see \cite{leoni2007chainrule}. Moreover, $|T_t(u_k)|\le |u_k|$ and hence
\[
\int_{\R^n}V \, |T_t(u_k)|^p\,\dx
\le
\int_{\R^n}V \, |u_k|^p\,\dx<\infty.
\]
Therefore $T_t(u_k)\in \X$ and may be used as a test function in
\eqref{eq:exist_approx_uk}.

\begin{lemma}\label{lem:energy_estimate_Tt(uk)}
Let $\{u_k\}_{k \in \N}$ be the sequence of approximate solutions from Lemma~\ref{lem:approx_uk}. Then, for every $t>0$ and every $k\in\N$,
\begin{equation}\label{eq:energy_estimate_Tt(uk)}
  \| T_t(u_k) \|_{\X}^p = \int_{\R^n} |\nabla T_t(u_k)|^p\,\dx
  + \int_{\R^n} V\,|T_t(u_k)|^p\,\dx
  \le t\,\|f\|_1.
\end{equation}
\end{lemma}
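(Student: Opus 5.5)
The plan is to test the weak formulation \eqref{eq:exist_approx_uk} with $\varphi=T_t(u_k)$. This is admissible because $T_t(u_k)\in\X$, as observed just before the statement. We then compare each term on the left with the corresponding term in $\|T_t(u_k)\|_{\X}^p$.

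For the gradient term we use the chain rule $\nabla T_t(u_k)=\chi_{\{|u_k|<t\}}\nabla u_k$ a.e. This gives
\[
\int_{\R^n}|\nabla u_k|^{p-2}\nabla u_k\cdot\nabla T_t(u_k)\,\dx=\int_{\{|u_k|<t\}}|\nabla u_k|^p\,\dx=\int_{\R^n}|\nabla T_t(u_k)|^p\,\dx.
\]
On the level set $\{|u_k|=t\}$ we have $\nabla u_k=0$ a.e. (Stampacchia), so the convention chosen for $T_t'$ at $\pm t$ is irrelevant.

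For the potential term we use the pointwise inequality $|s|^{p-2}s\,T_t(s)\ge |T_t(s)|^p$ for all $s\in\R$. When $|s|\le t$ this is an equality. When $|s|>t$, the left side equals $|s|^{p-1}t\ge t^p=|T_t(s)|^p$. Since $V\ge1>0$, multiplying by $V$ and integrating gives
\[
\int_{\R^n}V|u_k|^{p-2}u_kT_t(u_k)\,\dx\ge\int_{\R^n}V|T_t(u_k)|^p\,\dx.
\]

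For the right-hand side, $|T_t(u_k)|\le t$ implies
\[
\int_{\R^n} f_kT_t(u_k)\,\dx\le t\|f_k\|_1\le t\|f\|_1,
\]
where the last step uses the standing assumption $\sup_k\|f_k\|_1\le\|f\|_1$. Combining the three displays yields \eqref{eq:energy_estimate_Tt(uk)}.

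There is no real obstacle. The only point needing care is the a.e. identity for the gradient of the truncation, which follows from the cited chain rule for Lipschitz functions together with the vanishing of $\nabla u_k$ on level sets.
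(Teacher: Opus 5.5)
Your proposal is correct and follows essentially the same route as the paper. You test with $T_t(u_k)$, use the chain rule for the gradient term and the pointwise inequality $|s|^{p-2}s\,T_t(s)\ge |T_t(s)|^p$ for the potential term, and bound the right-hand side with $|T_t(u_k)|\le t$ and $\|f_k\|_1\le\|f\|_1$; your remark that $\nabla u_k=0$ a.e.\ on $\{|u_k|=t\}$ is a harmless clarification the paper leaves implicit.
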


\begin{proof}
Fix $t>0$ and $k\in\N$. Taking $\varphi=T_t(u_k)$ as a test function in \eqref{eq:exist_approx_uk} we have
\[
\int_{\R^n} |\nabla u_k|^{p-2}\nabla u_k\cdot \nabla T_t(u_k)\,\dx
+\int_{\R^n} V\,|u_k|^{p-2}u_k\,T_t(u_k)\,\dx
=\int_{\R^n} f_k\,T_t(u_k)\, \dx.
\]
For the gradient term, we note that
\begin{align*}
\int_{\R^n} |\nabla u_k|^{p-2}\nabla u_k\cdot \nabla T_t(u_k)\,dx & = \int_{\R^n}|\nabla u_k|^{p}\chi_{\{|u_k|<t\}} \, \dx \\
& =\int_{\R^n} |\nabla T_t(u_k)|^{p}\,\dx.
\end{align*}
For the second term, pointwise, we have
\[
|u_k|^{p-2}u_k\,T_t(u_k)\ge |T_t(u_k)|^p.
\]
Indeed, if $|u_k|\le t$ then $T_t(u_k)=u_k$ and the two sides coincide; if $|u_k|>t$ then
$T_t(u_k)=t\,\mathrm{sign}(u_k)$ and so
\[
|u_k|^{p-2}u_k\,T_t(u_k)=t\,|u_k|^{p-1} > t^p=|T_t(u_k)|^p.
\]
Since $V$ is nonnegative, it follows that
\[
\int_{\R^n} V\,|u_k|^{p-2}u_k\,T_t(u_k)\,\dx
\ge \int_{\R^n} V\,|T_t(u_k)|^p\,\dx.
\]
Putting these into the weak formulation yields
\begin{equation*}
\int_{\R^n} |\nabla T_t(u_k)|^p\,\dx + \int_{\R^n} V\,|T_t(u_k)|^p\,\dx
\le \int_{\R^n} f_k\,T_t(u_k)\,\dx.
\end{equation*}
Finally, since $|T_t(u_k)|\le t$,
\[
\int_{\R^n} f_k\,T_t(u_k)\,\dx
\le \int_{\R^n} |f_k|\,|T_t(u_k)|\,\dx
\le t\,\|f_k\|_1
\le t\,\|f\|_1,
\]
which together with the previous estimate gives~\eqref{eq:energy_estimate_Tt(uk)}.
\end{proof}

\begin{lemma}\label{lem:tail-measure}
Let $\{u_k\}_{k \in \N}$ be the sequence of approximate solutions from Lemma~\ref{lem:approx_uk}. Then, for every $t > 0$, every $R>0$, and every $k\in\N$,
\begin{equation}\label{eq:tail-min-Tt(uk)}
\int_{|x|>R} \min(|T_t(u_k)|,1)^p\,\dx
\le |E_R| + \frac{t\|f\|_{1}}{\kappa R^\gamma},
\end{equation}
where $E_R$ is the set defined in~\eqref{eq:set_E_R}.
\end{lemma}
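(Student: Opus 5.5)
Our plan is to split the exterior region $\{|x|>R\}$ into the ``bad'' set $E_R$, where the potential falls below the confining profile, and its complement, where $V\ge\kappa|x|^\gamma\ge\kappa R^\gamma$. On $E_R$ we use only the trivial pointwise bound $\min(|T_t(u_k)|,1)^p\le 1$, which gives a contribution of at most $|E_R|$. On the complement $\{|x|>R\}\setminus E_R$ we discard the cutoff at $1$ and use instead
\[
\min(|T_t(u_k)|,1)^p\le |T_t(u_k)|^p\le \frac{V(x)}{\kappa R^\gamma}\,|T_t(u_k)|^p ,
\]
valid for a.e.\ such $x$ because there $V(x)\ge \kappa|x|^\gamma\ge\kappa R^\gamma$.

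Integrating the second estimate over $\{|x|>R\}\setminus E_R$ and enlarging the domain to all of $\R^n$, we get
\[
\int_{\{|x|>R\}\setminus E_R}\min(|T_t(u_k)|,1)^p\,\dx\le \frac{1}{\kappa R^\gamma}\int_{\R^n}V\,|T_t(u_k)|^p\,\dx .
\]
Lemma~\ref{lem:energy_estimate_Tt(uk)} bounds the right-hand side by $\|T_t(u_k)\|_{\X}^p/(\kappa R^\gamma)\le t\|f\|_1/(\kappa R^\gamma)$. Adding the contribution $|E_R|$ from the bad set yields~\eqref{eq:tail-min-Tt(uk)}, uniformly in $k$.

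There is no real obstacle here; the only point requiring care is bookkeeping. The bound $V\ge\kappa R^\gamma$ holds a.e.\ on $\{|x|\ge R\}\setminus E_R$ directly from the definition~\eqref{eq:set_E_R}. The estimate also uses the nonnegativity of $V$ (from $V\ge1$), which is what allows us to enlarge the domain of integration. Note that the bound does not require $|E_R|\to0$. Confinement in measure is only needed later, when the bound is combined with Theorem~\ref{thm_ARK} to verify hypothesis~(ii) for the truncations $T_t(u_k)$.
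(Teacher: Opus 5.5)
Your proposal is correct and follows the paper's proof: you split $\{|x|>R\}$ into $E_R$, which contributes at most $|E_R|$, and its complement, where $V\ge\kappa R^\gamma$ lets you bound $|T_t(u_k)|^p$ by $V|T_t(u_k)|^p/(\kappa R^\gamma)$ before invoking Lemma~\ref{lem:energy_estimate_Tt(uk)}. Your remark that the estimate itself does not use $|E_R|\to0$ is also accurate.
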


\begin{proof}
Fix $R>0$. Split $\{|x|\geq R\}$ into the set $G_R:=\{|x| \geq R\}\setminus E_R$ and the set~$E_R$. On $G_R$ we have $V\ge \kappa|x|^\gamma\ge \kappa R^\gamma$, hence
\[
|T_t(u_k)|^p \le \frac{1}{\kappa R^\gamma}\,V\,|T_t(u_k)|^p
\quad\text{on } G_R.
\]
Integrating and using~\eqref{eq:energy_estimate_Tt(uk)}
gives
\begin{equation} \label{eq:aux1}
\int_{G_R} |T_t(u_k)|^p\,\dx
\le \frac{1}{\kappa R^\gamma}\int_{\R^n} V\,|T_t(u_k)|^p\,\dx
\le \frac{t\|f\|_1}{\kappa R^\gamma}.
\end{equation}
Moreover,
\begin{equation}\label{eq:aux2}
\begin{split}
\int_{|x|>R} \min(|T_t(u_k)|,1)^p\,\dx
 = \ & \int_{E_R} \min(|T_t(u_k)|,1)^p\,\dx\\ & + \int_{G_R} \min(|T_t(u_k)|,1)^p\,\dx \\
 \le \ & |E_R| + \int_{G_R} |T_t(u_k)|^p\,\dx.
\end{split}
\end{equation}
Combining~\eqref{eq:aux1}--\eqref{eq:aux2} yields~\eqref{eq:tail-min-Tt(uk)}.
\end{proof}

\subsection{Stability estimate}

\begin{lemma}\label{lem:stability_approx}
Assume $2 \leq  p < \infty$ and let $\{u_k\}_{k \in \N}$ be the sequence of approximate solutions from Lemma~\ref{lem:approx_uk}. Then, for every $t>0$ and every $k,\ell\in\N$,
\[
\|T_t(u_k-u_\ell)\|_{\X}^p
\le
C_p\,t\,\|f_k-f_\ell\|_{1},
\]
for some constant $C_p>0$ depending only on $p$.
\end{lemma}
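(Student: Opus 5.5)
Subtract the weak formulations \eqref{eq:exist_approx_uk} for $u_k$ and $u_\ell$ and test with $\varphi=T_t(u_k-u_\ell)$. This test function is admissible: $u_k-u_\ell\in\X$, $T_t$ is Lipschitz, and $|T_t(u_k-u_\ell)|\le|u_k-u_\ell|$, so $T_t(u_k-u_\ell)\in\X$ exactly as in the discussion preceding Lemma~\ref{lem:energy_estimate_Tt(uk)}. Setting $w=u_k-u_\ell$, we obtain
\[
\int_{\R^n}\bigl(|\nabla u_k|^{p-2}\nabla u_k-|\nabla u_\ell|^{p-2}\nabla u_\ell\bigr)\cdot\nabla T_t(w)\,\dx
+\int_{\R^n}V\bigl(|u_k|^{p-2}u_k-|u_\ell|^{p-2}u_\ell\bigr)T_t(w)\,\dx
=\int_{\R^n}(f_k-f_\ell)T_t(w)\,\dx .
\]
Since $|T_t(w)|\le t$, the right-hand side is at most $t\|f_k-f_\ell\|_1$.

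For the left-hand side, use the elementary inequality valid for $p\ge2$: there is $c_p>0$ with
\[
\bigl(|a|^{p-2}a-|b|^{p-2}b\bigr)\cdot(a-b)\ge c_p|a-b|^p\qquad\text{for all }a,b\in\R^m .
\]
In the gradient term, $\nabla T_t(w)=\chi_{\{|w|<t\}}\nabla w$ a.e. On $\{|w|<t\}$ the integrand is therefore at least $c_p|\nabla w|^p=c_p|\nabla T_t(w)|^p$; elsewhere both the integrand and $|\nabla T_t(w)|^p$ vanish. Thus the gradient term dominates $c_p\int|\nabla T_t(w)|^p\,\dx$.

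In the potential term, write $\Phi(s)=|s|^{p-2}s$; it is increasing and $T_t(w)$ has the sign of $w$, so the integrand is nonnegative. On $\{|w|\le t\}$ we have $T_t(w)=w$, and the scalar inequality gives $V(\Phi(u_k)-\Phi(u_\ell))w\ge c_pV|w|^p=c_pV|T_t(w)|^p$. On $\{|w|>t\}$ we have $T_t(w)=t\,\mathrm{sign}(w)$, so
\[
(\Phi(u_k)-\Phi(u_\ell))T_t(w)=\frac{t}{|w|}\,(\Phi(u_k)-\Phi(u_\ell))w\ge \frac{t}{|w|}\,c_p|w|^p= c_p\,t\,|w|^{p-1}\ge c_p t^p=c_p|T_t(w)|^p .
\]
Since $V\ge0$, the potential term dominates $c_p\int V|T_t(w)|^p\,\dx$.

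Combining the two lower bounds with the right-hand side estimate yields $c_p\|T_t(w)\|_{\X}^p\le t\|f_k-f_\ell\|_1$, which is the claim with $C_p=1/c_p$. The only nontrivial ingredient is the strong monotonicity inequality for $p\ge2$, which is standard. One route is to write
\[
\Phi(a)-\Phi(b)=\int_0^1 D\Phi\bigl(b+s(a-b)\bigr)(a-b)\,\ds
\]
and use $D\Phi(\xi)\ge|\xi|^{p-2}\,\mathrm{Id}$ together with $\int_0^1|b+s(a-b)|^{p-2}\,\ds\ge c|a-b|^{p-2}$. The latter holds because a segment of length $|a-b|$ spends a definite fraction of its parameter interval at distance at least $|a-b|/4$ from the origin. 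This main step is therefore straightforward, and the key point is that $p\ge2$ is precisely where this lower bound holds.
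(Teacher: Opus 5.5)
Your proof is correct and is essentially the paper's argument. It uses the same test function $T_t(u_k-u_\ell)$, the same strong monotonicity inequality for $p\ge2$, the same split of the zero-order term into $\{|w|\le t\}$ and $\{|w|>t\}$, and the same constant $C_p=1/c_p$. The only difference is that you sketch a proof of the monotonicity inequality, which gives a smaller but still admissible $c_p$, whereas the paper cites it from Lindqvist with $c_p=2^{2-p}$.
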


\begin{proof}
Fix $t>0$ and $k,\ell\in\N$, and set
\[
w_{k,\ell}:=u_k-u_\ell,
\qquad
\psi:=T_t(w_{k,\ell})\in \X.
\]
Subtracting the weak formulations for $u_k$ and $u_\ell$ and testing with $\psi$,
we obtain
\begin{align*}
\int_{\R^n}
&\Big(|\nabla u_k|^{p-2}\nabla u_k-|\nabla u_\ell|^{p-2}\nabla u_\ell\Big)
\cdot \nabla \psi\,\dx
\\
&\quad
+\int_{\R^n}
V \Big(|u_k|^{p-2}u_k-|u_\ell|^{p-2}u_\ell\Big)\psi\,\dx
=
\int_{\R^n}(f_k-f_\ell)\psi\,\dx .
\end{align*}
Write
\[
A(\xi):=|\xi|^{p-2}\xi,
\qquad
g(s):=|s|^{p-2}s.
\]
Since $p\ge2$, we have
\begin{equation}\label{eq:monotonicity-A}
\big(A(\xi)-A(\eta)\big)\cdot(\xi-\eta)\ge c_p|\xi-\eta|^p
\qquad\text{for all }\xi,\eta\in\R^n,
\end{equation}
with $c_p=2^{2-p}$; see \cite{lindqvist2019stationary}. In particular,
taking $n=1$, we also have
\begin{equation}\label{eq:monotonicity-g}
\big(g(a)-g(b)\big)(a-b)\ge c_p|a-b|^p
\qquad\text{for all }a,b\in\R.
\end{equation}
Using
\[
\nabla\psi=\nabla T_t(w_{k,\ell})
=(\nabla u_k-\nabla u_\ell)\chi_{\{|w_{k,\ell}|<t\}}
\qquad\text{a.e.\ on }\R^n,
\]
we obtain from \eqref{eq:monotonicity-A} that
\[
\int_{\R^n}
\big(A(\nabla u_k)-A(\nabla u_\ell)\big)\cdot \nabla \psi\,\dx
\ge
c_p\int_{\R^n}|\nabla T_t(w_{k,\ell})|^p\,\dx.
\]
For the zero-order term, we claim that
\[
\big(g(a)-g(b)\big)\,T_t(a-b)\ge c_p|T_t(a-b)|^p
\qquad\text{for all }a,b\in\R.
\]
Indeed, if $|a-b|\le t$, then $T_t(a-b)=a-b$ and the claim follows from
\eqref{eq:monotonicity-g}. If $|a-b|>t$, then
\[
T_t(a-b)=t\,\operatorname{sign}(a-b),
\]
and since $g(a)-g(b)$ has the same sign as $a-b$, we have
\[
\big(g(a)-g(b)\big)\,T_t(a-b)=t\,|g(a)-g(b)|.
\]
Moreover,
\[
|g(a)-g(b)|\,|a-b|
=
\big(g(a)-g(b)\big)(a-b)
\ge c_p|a-b|^p,
\]
hence
\[
|g(a)-g(b)|\ge c_p|a-b|^{p-1}\ge c_p t^{p-1},
\]
and therefore
\[
\big(g(a)-g(b)\big)\,T_t(a-b)\ge c_p t^p=c_p|T_t(a-b)|^p.
\]
Applying this pointwise with $a=u_k(x)$ and $b=u_\ell(x)$, we obtain
\[
\int_{\R^n}
V\big(g(u_k)-g(u_\ell)\big)\psi\,\dx
\ge
c_p\int_{\R^n}V\,|T_t(w_{k,\ell})|^p\,\dx.
\]
Finally, since $|\psi|\le t$, we have
\[
\left|\int_{\R^n}(f_k-f_\ell) \, \psi\,\dx\right|
\le
t\,\|f_k-f_\ell\|_1.
\]
Combining the three estimates gives
\[
c_p \int_{\R^n}|\nabla T_t(w_{k,\ell})|^p\,\dx
+c_p \int_{\R^n}V\,|T_t(w_{k,\ell})|^p\,\dx
\le
t\,\|f_k-f_\ell\|_1,
\]
which is exactly the desired estimate with $C_p = c_p^{-1}$.
\end{proof}

\subsection{Localized identity}

\begin{lemma}\label{lem:approx_CTI}
Let $\{u_k\}_{k\in\N}$ be the sequence of approximate solutions from
Lemma~\ref{lem:approx_uk}. Fix $t>0$, let
$\phi\in W^{1,p}(\R^n)\cap L^\infty(\R^n)$ have compact support, and let
$\alpha>t+\|\phi\|_\infty$.
For each $k\in\N$, set
\[
\Phi_k
:=
T_t\big(T_\alpha(u_k)-\phi\big)-T_t\big(T_\alpha(u_k)\big).
\]
Then $\Phi_k\in \X$, $\supp \Phi_k\subseteq \supp\phi$, and
\begin{align}
\int_{\R^n} & |\nabla T_\alpha(u_k)|^{p-2}\nabla T_\alpha(u_k)\cdot \nabla \Phi_k\,\dx
\notag\\
& +\int_{\R^n} V\,|T_\alpha(u_k)|^{p-2}T_\alpha(u_k)\,\Phi_k\,\dx
= \int_{\R^n} f_k\,\Phi_k\,\dx .
\label{eq:approx_compact_identity}
\end{align}
\end{lemma}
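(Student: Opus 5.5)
The plan is to test the weak formulation~\eqref{eq:exist_approx_uk} with $\varphi=\Phi_k$, and then replace $u_k$ by $T_\alpha(u_k)$ in the left-hand side. For this to work, $\Phi_k$ must vanish, together with its gradient, wherever $|u_k|\ge\alpha$. The argument has three steps: membership and support, vanishing on the large set, and the substitution.

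\emph{Step 1: $\Phi_k\in\X$ and its support.} Set $v:=T_\alpha(u_k)$. Then $v\in W^{1,p}(\R^n)\cap L^\infty$ and $v-\phi\in W^{1,p}(\R^n)$. Since $T_t$ is Lipschitz with $T_t(0)=0$, both $T_t(v-\phi)$ and $T_t(v)$ belong to $W^{1,p}(\R^n)$, and the chain rule \cite{leoni2007chainrule} applies to each. Off $\supp\phi$ we have $\phi=0$, so $\Phi_k=T_t(v)-T_t(v)=0$; hence $\supp\Phi_k\subseteq\supp\phi$. Moreover $|\Phi_k|\le 2t$ and $\supp\phi$ is compact, so $V\in L^\infty_{\rm loc}$ gives $\int V|\Phi_k|^p<\infty$. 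Therefore $\Phi_k\in\X$ is an admissible test function.

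\emph{Step 2: vanishing on $\{|u_k|\ge\alpha\}$.} This is the key observation. On $\{u_k\ge\alpha\}$ we have $v=\alpha$, so $v-\phi\ge\alpha-\|\phi\|_\infty>t$. Hence $T_t(v-\phi)=t=T_t(v)$, because $\alpha>t$, and so $\Phi_k=0$. The case $u_k\le-\alpha$ is symmetric.

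For the gradient, the chain rule gives
\[
\nabla\Phi_k=\chi_{\{|v-\phi|<t\}}(\nabla v-\nabla\phi)-\chi_{\{|v|<t\}}\nabla v .
\]
On $\{|u_k|\ge\alpha\}$ both indicator sets are empty, since there $|v-\phi|>t$ and $|v|=\alpha>t$. Thus $\nabla\Phi_k=0$ a.e.\ on $\{|u_k|\ge\alpha\}$. (Alternatively, $\nabla\Phi_k=0$ a.e.\ on the level set $\{\Phi_k=0\}$.)

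\emph{Step 3: substitution.} Test~\eqref{eq:exist_approx_uk} with $\varphi=\Phi_k$. On $\{|u_k|<\alpha\}$ we have $u_k=T_\alpha(u_k)$ and $\nabla u_k=\nabla T_\alpha(u_k)$ a.e. On the complement, both integrands vanish by Step 2. So the integrands may be rewritten with $T_\alpha(u_k)$ in place of $u_k$, which yields~\eqref{eq:approx_compact_identity}.

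The only delicate point is the a.e.\ gradient bookkeeping in Step 2, namely applying the chain rule for Lipschitz truncations to $v-\phi$ and handling the level sets $\{|v-\phi|=t\}$. This is standard: the gradient of a Sobolev function vanishes a.e.\ on its level sets, so these sets contribute nothing.
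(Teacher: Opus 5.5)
Your proposal is correct and follows essentially the same argument as the paper's proof: test \eqref{eq:exist_approx_uk} with $\Phi_k$, show that $\Phi_k$ and $\nabla\Phi_k$ vanish a.e.\ where $|u_k|\ge\alpha$ (because there $|T_\alpha(u_k)-\phi|>t$ and $|T_\alpha(u_k)|=\alpha>t$), and replace $u_k$ by $T_\alpha(u_k)$ on $\{|u_k|<\alpha\}$. Using $\{|u_k|\ge\alpha\}$ instead of the paper's $\{|u_k|>\alpha\}$ also handles the level set $\{|u_k|=\alpha\}$ explicitly, which the paper leaves implicit, and you prove the support inclusion $\supp\Phi_k\subseteq\supp\phi$ directly rather than only noting that $\Phi_k$ has compact support.
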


\begin{proof}
Since $T_\alpha(u_k)\in \X$, the functions $T_t(T_\alpha(u_k)-\phi)$ and
$T_t(T_\alpha(u_k))$ belong to $W^{1,p}(\R^n)$ by the Sobolev chain rule, and
therefore so does $\Phi_k$. Moreover, $\Phi_k$ is compactly supported because $\phi$ is
compactly supported. Since $V\in L^\infty_{\loc}(\R^n)$ and $|\Phi_k|\le 2t$, it follows
that $\Phi_k\in \X$.

Using $\Phi_k$ as a test function in \eqref{eq:exist_approx_uk}, we obtain
\begin{equation}\label{eq:weak_with_Hk}
\int_{\R^n} |\nabla u_k|^{p-2}\nabla u_k\cdot \nabla \Phi_k\,\dx
+\int_{\R^n} V\,|u_k|^{p-2}u_k\,\Phi_k\,\dx
=
\int_{\R^n} f_k\,\Phi_k\,\dx .
\end{equation}

We now show that the two left-hand side terms may be rewritten with $T_\alpha(u_k)$.

\smallskip
\noindent
\emph{Gradient term.}
By the chain rule,
\[
\nabla \Phi_k
=
T_t'\big(T_\alpha(u_k)-\phi\big)\big(\nabla T_\alpha(u_k)-\nabla\phi\big)
-
T_t'\big(T_\alpha(u_k)\big)\nabla T_\alpha(u_k)
\qquad\text{a.e. in }\R^n.
\]
If $|u_k|>\alpha$, then $T_\alpha(u_k)=\alpha\,\operatorname{sign}(u_k)$ and, by the
assumption on $\alpha$,
\[
|T_\alpha(u_k)-\phi|
\ge \alpha-\|\phi\|_\infty
> t,
\qquad
|T_\alpha(u_k)|=\alpha>t.
\]
Hence both derivatives $T_t'\big(T_\alpha(u_k)-\phi\big)$ and 
$T_t'\big(T_\alpha(u_k)\big)$ vanish a.e.\ on $\{|u_k|>\alpha\}$, and so
\[
\nabla \Phi_k=0
\qquad\text{a.e.\ on }\{|u_k|>\alpha\}.
\]
Since $\nabla T_\alpha(u_k)=\nabla u_k$ a.e.\ on $\{|u_k|<\alpha\}$, we conclude that
\begin{equation}\label{eq:localized_aux1}
|\nabla u_k|^{p-2}\nabla u_k\cdot \nabla \Phi_k
=
|\nabla T_\alpha(u_k)|^{p-2}\nabla T_\alpha(u_k)\cdot \nabla \Phi_k
\qquad\text{a.e.\ in }\R^n.
\end{equation}

\smallskip
\noindent
\emph{Potential term.}
If $|u_k|\le \alpha$, then $T_\alpha(u_k)=u_k$, so
\[
|u_k|^{p-2}u_k\,\Phi_k
=
|T_\alpha(u_k)|^{p-2}T_\alpha(u_k)\,\Phi_k.
\]
On the set $\{|u_k|>\alpha\}$ one has $T_\alpha(u_k)=\alpha\,\operatorname{sign}(u_k)$.
We consider the cases $u_k>\alpha$ and $u_k<-\alpha$ separately. If $u_k>\alpha$, then $T_\alpha(u_k)=\alpha$. Using $|\phi|\le \|\phi\|_\infty$
and the assumption on $\alpha$, we obtain
\begin{equation*}
T_\alpha(u_k)-\phi=\alpha-\phi\ge \alpha-\|\phi\|_\infty>t.
\end{equation*}
Hence
\begin{equation*}
T_t\bigl(T_\alpha(u_k)-\phi\bigr)=t,
\qquad
T_t\bigl(T_\alpha(u_k)\bigr)=T_t(\alpha)=t.
\end{equation*}
Similarly, if $u_k<-\alpha$, then
\begin{equation*}
T_\alpha(u_k)-\phi=-\alpha-\phi\le -\alpha+\|\phi\|_\infty<-t.
\end{equation*}
Therefore
\begin{equation*}
T_t\bigl(T_\alpha(u_k)-\phi\bigr)=-t,
\qquad
T_t\bigl(T_\alpha(u_k)\bigr)=T_t(-\alpha)=-t.
\end{equation*}
In either case,
\begin{equation*}
T_t\bigl(T_\alpha(u_k)-\phi\bigr)
=
t\,\operatorname{sign}(u_k)
=
T_t\bigl(T_\alpha(u_k)\bigr),
\end{equation*}
and so
\[
\Phi_k
=
T_t\bigl(T_\alpha(u_k)-\phi\bigr)-T_t\bigl(T_\alpha(u_k)\bigr)=0.
\]
Consequently,
\begin{equation}\label{eq:localized_aux2}
|u_k|^{p-2}u_k\,\Phi_k
=
|T_\alpha(u_k)|^{p-2}T_\alpha(u_k)\,\Phi_k
\qquad\text{a.e. in }\R^n.
\end{equation}

Substituting~\eqref{eq:localized_aux1} and \eqref{eq:localized_aux2} into \eqref{eq:weak_with_Hk} yields
\eqref{eq:approx_compact_identity} and concludes the proof.
\end{proof}

\section{Compactness and passage to the limit} \label{sec:passage_limit}

In this section, we exploit the compactness theorem proved in
Section~\ref{sec:proof_ARK} in order to pass from the approximate solutions
constructed in Section~\ref{sec:approximate} to a limit object in
$\Lambda^p(\R^n)$. We first show that convergence of the truncations implies
that the approximate sequence is Cauchy in measure. We then prove total
boundedness of the truncated family and perform a diagonal extraction. Finally, we
show that the resulting limit has all truncations in the energy space~$\X$ and
satisfies the corresponding uniform energy bounds.

\subsection{From truncations to convergence in measure}

\begin{lemma} \label{lem:convmeasure_from_trunc}
Let $\{u_k\}_{k \in \N}$ be the sequence of approximate solutions from Lemma~\ref{lem:approx_uk}. If, for each $m \in \N$, the sequence $\{T_m(u_k) \}_{k \in \N}$ converges in measure to some measurable function $v_m$, then the sequence $\{u_k\}_{k \in \N}$ is Cauchy in measure. Additionally, if there exists a measurable function $u$ such that
\begin{equation} \label{eq:aux_appendix}
v_m = T_m(u), \qquad \int_{\R^n}|T_m(u)|^p\,\dx \leq m  \| f\|_1, \qquad \text{for all }m \in \N, 
\end{equation}
 then 
\[u_k \to u \qquad \text{in measure}. \]
\end{lemma}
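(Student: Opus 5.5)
The plan is to control the set where $u_k$ and $u_\ell$ differ by splitting it according to whether $|u_k|$ or $|u_\ell|$ is large. Fix $\eta>0$ and $m\in\N$. On the set where both $|u_k|\le m$ and $|u_\ell|\le m$ we have $u_k-u_\ell=T_m(u_k)-T_m(u_\ell)$. This gives the inclusion
\[
\{|u_k-u_\ell|>\eta\}\subseteq\{|u_k|>m\}\cup\{|u_\ell|>m\}\cup\{|T_m(u_k)-T_m(u_\ell)|>\eta\}.
\]

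First I bound the large-value sets uniformly in $k$. By Chebyshev's inequality and Lemma~\ref{lem:energy_estimate_Tt(uk)} with $t=m$, since $V\ge1$,
\[
|\{|u_k|>m\}| = |\{|T_m(u_k)|\ge m\}|\cdot[\text{up to the level }m]\le m^{-p}\int_{\R^n}|T_m(u_k)|^p\,\dx\le m^{1-p}\|f\|_1 .
\]
More carefully, $\{|u_k|>m\}\subseteq\{|T_m(u_k)|\ge m\}$, so the bound holds. Since $p>1$, this tends to $0$ as $m\to\infty$ uniformly in $k$. Given $\eps>0$, I choose $m$ with $2m^{1-p}\|f\|_1<\eps/2$.

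Next, for this fixed $m$, the sequence $\{T_m(u_k)\}_k$ converges in measure to $v_m$, hence it is Cauchy in measure. So for $k,\ell$ large, $|\{|T_m(u_k)-T_m(u_\ell)|>\eta\}|<\eps/2$. Combining the two bounds, $|\{|u_k-u_\ell|>\eta\}|<\eps$, and therefore $\{u_k\}$ is Cauchy in measure.

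For the second claim, I assume \eqref{eq:aux_appendix}. By the same Chebyshev argument, $|\{|u|>m\}|\le m^{1-p}\|f\|_1$. The analogous inclusion
\[
\{|u_k-u|>\eta\}\subseteq\{|u_k|>m\}\cup\{|u|>m\}\cup\{|T_m(u_k)-T_m(u)|>\eta\}
\]
then bounds the first two sets uniformly by $2m^{1-p}\|f\|_1$. The third set has measure tending to $0$ because $T_m(u_k)\to v_m=T_m(u)$ in measure. Choosing $m$ first and then $k$ gives $u_k\to u$ in measure.

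The main point to watch is that the Chebyshev estimate must be uniform in $k$. This is exactly what the truncation energy bound of Lemma~\ref{lem:energy_estimate_Tt(uk)} provides, and it works because $p>1$ makes $m^{1-p}\to0$. Measurability of the functions involved is not an issue.
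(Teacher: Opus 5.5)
Your proof is correct and follows essentially the same route as the paper: the same three-set inclusion, the uniform Chebyshev bound $|\{|u_k|>m\}|\le m^{1-p}\|f\|_1$ obtained from Lemma~\ref{lem:energy_estimate_Tt(uk)}, and the Cauchy-in-measure property of $\{T_m(u_k)\}_k$. For the second claim, you also make explicit the estimate $|\{|u|>m\}|\le m^{1-p}\|f\|_1$ coming from the bound in \eqref{eq:aux_appendix}, which the paper leaves implicit in ``the same argument''. Your first display for $|\{|u_k|>m\}|$ is garbled, but the inclusion $\{|u_k|>m\}\subseteq\{|T_m(u_k)|\ge m\}$ you write right after it already repairs this.
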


\begin{proof}
Fix $\varepsilon>0$. Then, for $k,j,m \in \N$,
\[
\{|u_k-u_j|>\varepsilon\}
\subseteq
\{|T_m(u_k)-T_m(u_j)|>\varepsilon\}
\cup
\{|u_k|>m\}
\cup
\{|u_j|>m\}.
\]
Since $\{T_m(u_k)\}_{k \in \N}$ is Cauchy in measure, we have
\[
\big|\{|T_m(u_k)-T_m(u_j)|>\varepsilon\}\big|
\to 0 \qquad \text{as }k,j\to\infty.
\]
On the other hand, by Lemma~\ref{lem:energy_estimate_Tt(uk)} with $t=m$,
\[
\int_{\R^n}|T_m(u_k)|^p\,\dx
\le
\int_{\R^n}V \, |T_m(u_k)|^p\,\dx
\le m \, \|f\|_1 \qquad \text{for all } k \in \N.
\]
Since $|T_m(u_k)|=m$ on $\{|u_k|>m\}$, this gives
\begin{align*}
|\{|u_k|>m\}| &= m^{-p} \int_{|u_k| > m} m^p \, \dx \\
& = m^{-p} \int_{|u_k| > m} |T_m(u_k)|^p \, \dx \\
& \leq m^{1-p}\|f\|_1,
\end{align*}
for all $k \in \N$.

Therefore
\[
\limsup_{k,j\to\infty}
|\{|u_k-u_j|>\varepsilon\}|
\le
2m^{1-p}\|f\|_1.
\]
Letting $m\to\infty$ proves that $\{u_k \}_{k \in \N}$ is Cauchy in measure. \par 
If, in addition, there is some measurable function $u$ such that~\eqref{eq:aux_appendix} holds, then the same argument applied to $\{|u_k-u|>\varepsilon\}$ yields the desired conclusion.
\end{proof}

\subsection{Total boundedness of truncations}

\begin{lemma}\label{lem:ARK_trunc}
Let $\{u_k\}_{k \in \N}$ be the sequence of approximate solutions from Lemma~\ref{lem:approx_uk}. Then, for each $t > 0$ the family $\{T_t(u_k)\}_{k\in\N}$ is totally bounded in $\Lambda^p(\R^n)$.
Consequently, there exist a subsequence $\{u_{k_j^{(t)}}\}_{j\in\N}$ and $v_t \in \Lambda^p(\R^n)$ such that
\begin{equation}\label{eq:ARK_trunc_conv}
  T_t(u_{k_j^{(t)}}) \to v_t \qquad\text{in }\Lambda^p(\R^n) \qquad \text{as }  j \to \infty.
\end{equation}
\end{lemma}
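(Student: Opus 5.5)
The plan is to apply Theorem~\ref{thm_ARK} to the family $\mathcal F_t:=\{T_t(u_k)\}_{k\in\N}$, which lies in $W^{1,p}(\R^n)$ by the chain rule since $u_k\in\X$. Hypothesis (i) and hypothesis (ii) will each be checked from estimates already proved. Total boundedness then gives the convergent subsequence, because $\Lambda^p(\R^n)$ is complete.

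\emph{Condition (i).} By Lemma~\ref{lem:energy_estimate_Tt(uk)} and $V\ge1$, we have
\[
\|T_t(u_k)\|_p^p+\|\nabla T_t(u_k)\|_p^p\le \|T_t(u_k)\|_{\X}^p\le t\|f\|_1
\]
uniformly in $k$. This gives the $L^p$ bound. For the gradient term we need an exponent $1<q<\infty$ with a uniform $L^{q,\infty}$ bound. When $p>1$ we take $q=p$ and use Chebyshev's inequality, $\|h\|_{p,\infty}\le\|h\|_p$. Then
\[
\|T_t(u_k)\|_p+\|\nabla T_t(u_k)\|_{p,\infty}\le 2(t\|f\|_1)^{1/p}
\]
for all $k$. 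We are in the setting $1<p<\infty$ of Section~\ref{sec:approximate}, so $q=p$ is admissible.

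\emph{Condition (ii).} Fix $\varepsilon>0$. Lemma~\ref{lem:tail-measure} gives
\[
\int_{|x|>R}\min(|T_t(u_k)|,1)^p\,\dx\le |E_R|+\frac{t\|f\|_1}{\kappa R^\gamma}
\]
uniformly in $k$. Confinement in measure gives $|E_R|\to0$, and $R^{-\gamma}\to0$. Hence we can choose $R$ so large that the right-hand side is below $\varepsilon^p$. This choice does not depend on $k$.

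Theorem~\ref{thm_ARK} now yields that $\mathcal F_t$ is totally bounded in $\Lambda^p(\R^n)$. Since $\Lambda^p(\R^n)$ is a complete metric space, its closure is compact. The sequence $\{T_t(u_k)\}_k$ therefore has a subsequence converging in the metric $d$ to some $v_t\in\Lambda^p(\R^n)$, which is \eqref{eq:ARK_trunc_conv}. (Concretely, one can also extract a $d$-Cauchy subsequence by the usual diagonal argument over finite $2^{-i}$-nets and then invoke completeness.)

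No step is a real obstacle; the point needing most care is condition (i). There one must note that Theorem~\ref{thm_ARK} requires $q>1$ and only weak-$L^q$ control of the gradient. This is supplied by the strong $L^p$ bound with $q=p>1$, and this bound is uniform in $k$ because Lemma~\ref{lem:energy_estimate_Tt(uk)} depends only on $\|f\|_1$.
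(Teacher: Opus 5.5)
Your proposal is correct and matches the paper's proof: both apply Theorem~\ref{thm_ARK} to $\{T_t(u_k)\}_{k\in\N}$. Hypothesis (i) is checked with $q=p$, using Lemma~\ref{lem:energy_estimate_Tt(uk)}, $V\ge1$ and $\|h\|_{p,\infty}\le\|h\|_p$; hypothesis (ii) follows by choosing $R$ with $|E_R|+t\|f\|_1/(\kappa R^\gamma)<\varepsilon^p$ via Lemma~\ref{lem:tail-measure}, and the convergent subsequence then comes from completeness of $\Lambda^p(\R^n)$.
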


\begin{proof}
Fix $t>0$ and set $\mathcal{F}_t:=\{T_t(u_k) :\, k\in\N\}$.
We verify the hypotheses of Theorem~\ref{thm_ARK} for $\mathcal{F}_t$.

\par
By Lemma~\ref{lem:energy_estimate_Tt(uk)} and $V\ge 1$ a.e.,
\[
  \sup_{k\in\N}\|T_t(u_k)\|_p^p
  \le \sup_{k\in\N}\int_{\R^n}V\,|T_t(u_k)|^p\,\dx
  \le t\,\|f\|_1.
\]
Moreover, Lemma~\ref{lem:energy_estimate_Tt(uk)} also gives
\[
  \sup_{k\in\N}\|\nabla T_t(u_k)\|_{p,\infty} \leq \sup_{k\in\N}\|\nabla T_t(u_k)\|_p <\infty.
\]
Thus Theorem~\ref{thm_ARK}(i) holds with $q=p$.

\par 
Now, let $\varepsilon>0$ and choose $R > 0$ such that
\[|E_R| + \frac{t\|f\|_{1}}{\kappa R^\gamma} < \eps^p.\]
Then Lemma~\ref{lem:tail-measure} implies
\begin{equation}\label{eq:tail_trunc_Lambda}
  \int_{|x|>R} \min\big(|T_t(u_k)|,1\big)^p\,\dx < \varepsilon^p
\end{equation}
for all $k \in \N$. This is Theorem~\ref{thm_ARK}(ii).
\end{proof}

\subsection{Diagonal extraction}
\begin{lemma}\label{lem:diagonal_entropy}
Let $\{u_k\}_{k \in \N}$ be the sequence of approximate solutions of Lemma~\ref{lem:approx_uk}. Then, there exist a subsequence $\{u_{k_j}\}_{j\in\N}$ and a function $u\in \Lambda^p(\R^n)$ such that for every $m\in\N$,
\begin{equation}\label{eq:diag_conv}
  T_m(u_{k_j}) \to T_m(u) \qquad\text{in }\Lambda^p(\R^n)
  \qquad\text{as }j\to\infty.
\end{equation}
\end{lemma}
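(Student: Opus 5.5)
Apply Lemma~\ref{lem:ARK_trunc} successively with $t=1,2,3,\dots$ and extract a diagonal subsequence. Then identify the limits $v_m$ as truncations of one function $u$, using convergence in measure and Lemma~\ref{lem:convmeasure_from_trunc}.

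\emph{Extraction.} Lemma~\ref{lem:ARK_trunc} with $t=1$ gives a subsequence along which $T_1(u_k)\to v_1$ in $\Lambda^p(\R^n)$. Applying the same lemma to $\{T_2(u_k)\}$ restricted to this subsequence (still totally bounded, since it is a subfamily) and using completeness of $\Lambda^p(\R^n)$, we get a further subsequence with $T_2(u_k)\to v_2$. Iterating and taking the diagonal sequence $\{u_{k_j}\}_{j\in\N}$ yields $T_m(u_{k_j})\to v_m$ in $\Lambda^p(\R^n)$ for every $m\in\N$. Convergence in $\Lambda^p$ implies convergence in measure, since
\[
|\{|g|>\eps\}|\le \min(\eps,1)^{-p}\|\min(|g|,1)\|_p^p .
\]
Passing to a further diagonal subsequence, we may also assume $T_m(u_{k_j})\to v_m$ a.e.\ for every $m$.

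\emph{Consistency of the limits.} Since $T_m\circ T_{m'}=T_m$ for $m\le m'$ and $T_m$ is continuous, the a.e.\ limits satisfy $T_m(v_{m'})=v_m$. By Fatou and Lemma~\ref{lem:energy_estimate_Tt(uk)},
\[
\int_{\R^n}|v_m|^p\,\dx\le m\|f\|_1 .
\]
Hence $|\{|v_m|=m\}|\le m^{1-p}\|f\|_1\to0$ as $m\to\infty$; the case $p=1$ needs separate care, but Theorem~\ref{thm:AES} assumes $p\ge2$. Set $u(x):=\lim_m v_m(x)$. The sequence $v_m(x)$ is eventually constant wherever some $|v_m(x)|<m$, and by the consistency relation this holds a.e.\ outside $\bigcap_m\{|v_m|=m\}$, which is a null set. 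Thus $u$ is measurable, finite a.e., and satisfies $T_m(u)=v_m$ for all $m$. This verifies \eqref{eq:aux_appendix}, and \eqref{eq:diag_conv} follows.

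\emph{Membership $u\in\Lambda^p(\R^n)$.} Since $\min(|u|,1)=\min(|T_1(u)|,1)\le|v_1|$, we get
\[
\int\min(|u|,1)^p\,\dx\le\|v_1\|_p^p\le\|f\|_1<\infty .
\]

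The step needing care is the second one: checking that the separately obtained limits $v_m$ glue into a single a.e.\ finite function. The measure bound on $\{|v_m|=m\}$ is what handles this.
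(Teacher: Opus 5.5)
Your proof is correct. The extraction step is the same as the paper's, but you identify the limits by a genuinely different route.

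\textbf{How the paper identifies the limits.} The paper does not glue the $v_m$ together. It invokes Lemma~\ref{lem:convmeasure_from_trunc}, whose uniform bound $|\{|u_k|>m\}|\le m^{1-p}\|f\|_1$ shows that the diagonal sequence $\{u_{k_j}\}$ is itself Cauchy in measure. It then passes to an a.e.\ convergent subsequence with an a.e.\ finite limit $u$, and reads off $v_m=T_m(u)$ from uniqueness of limits.

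\textbf{How you identify the limits.} You apply the same kind of estimate to the limits rather than to the approximating sequence. Via Fatou you get $|\{|v_m|=m\}|\le m^{1-p}\|f\|_1$. You then assemble $u$ from the compatibility relations $T_m(v_{m'})=v_m$ and never return to the $u_{k_j}$.

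\textbf{What each route buys.} Yours is more self-contained. It bypasses the Cauchy-in-measure step and Lemma~\ref{lem:convmeasure_from_trunc}, and it gives the explicit bound $\|\min(|u|,1)\|_p^p\le\|f\|_1$ for free. The paper's route ties $u$ directly to the approximate solutions as an a.e.\ limit of the $u_{k_j}$. In your version, the convergence $u_{k_j}\to u$ in measure, which the existence proof needs later, comes only afterwards from the second part of Lemma~\ref{lem:convmeasure_from_trunc}. Your check of \eqref{eq:aux_appendix} is exactly what supplies it.

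\textbf{A small correction.} Your remark about $p=1$ invokes the wrong hypothesis. The only thing needed for $m^{1-p}\to0$ is $p>1$, which is a standing assumption of the paper. The lemma therefore holds for all $1<p<\infty$, without appealing to the restriction $p\ge2$ of Theorem~\ref{thm:AES}.
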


\begin{proof}

For $m=1$, Lemma~\ref{lem:ARK_trunc} provides a subsequence
$\{u_{k_j^{(1)}}\}_{j\in\N}$ and a function $v_1\in \Lambda^p(\R^n)$ such that
\[
T_1(u_{k_j^{(1)}})\to v_1
\qquad\text{in }\Lambda^p(\R^n).
\]
For $m=2$, apply Lemma~\ref{lem:ARK_trunc} to the subsequence
$\{u_{k_j^{(1)}}\}_{j\in\N}$ to obtain a further subsequence
$\{u_{k_j^{(2)}}\}_{j\in\N}$ and a function $v_2\in \Lambda^p(\R^n)$ such that
\[
T_2(u_{k_j^{(2)}})\to v_2
\qquad\text{in }\Lambda^p(\R^n).
\]
Proceeding inductively, we obtain nested subsequences
$\{u_{k_j^{(m)}}\}_{j\in\N}$ and functions $v_m\in \Lambda^p(\R^n)$ such that
\[
T_m(u_{k_j^{(m)}})\to v_m
\qquad\text{in }\Lambda^p(\R^n)
\]
for every $m\in\N$. 

Let $u_{k_j}:=u_{k_j^{(j)}}$ be the diagonal subsequence. Then, for each fixed $m\in\N$, the tail $\{u_{k_j}\}_{j\ge m}$ is a subsequence of
$\{u_{k_j^{(m)}}\}_{j\in\N}$, hence
\begin{equation}\label{eq:diag_to_vm}
T_m(u_{k_j})\to v_m
\qquad\text{in }\Lambda^p(\R^n)
\qquad\text{as }j\to\infty.
\end{equation}

Since convergence in $\Lambda^p(\R^n)$ implies convergence in measure, using Lemma~\ref{lem:convmeasure_from_trunc} we conclude that sequence $\{u_{k_j}\}_{j\in\N}$ is Cauchy in measure. Then, there exists a further subsequence, still denoted by
$\{u_{k_j}\}_{j\in\N}$, and a real-valued measurable function $u$ on $\R^n$ such that
\[
u_{k_j}(x)\to u(x)
\qquad\text{for a.e.\ }x\in\R^n.
\]
By continuity, for every $m\in\N$,
\[
T_m(u_{k_j}(x))\to T_m(u(x))
\qquad\text{for a.e.\ }x\in\R^n.
\]
Since $T_m(u_{k_j})\to v_m$ in~$\Lambda^p(\R^n)$ and
$T_m(u_{k_j})\to T_m(u)$ pointwise a.e., it follows that
\[
v_m=T_m(u)
\qquad\text{a.e.\ on }\R^n.
\]
Thus, $T_m(u) \in \Lambda^p(\R^n)$ and from \eqref{eq:diag_to_vm} we obtain~\eqref{eq:diag_conv}. Finally, noting that for any $m \in \N$ we have
\[\min(|T_m(u)|,1) = \min(|u|,1), \]
it follows that $u \in \Lambda^p(\R^n)$.
\end{proof}

\subsection{Arbitrary truncation levels}

\begin{lemma}
\label{lem:all-alpha-trunc}
Let $u$ and $\{u_{k_j}\}_{j\in\N}$ be as in Lemma~\ref{lem:diagonal_entropy}.
Then, for every $\alpha>0$,
\begin{equation}\label{eq:all-alpha-Lambda}
T_\alpha(u_{k_j})\to T_\alpha(u)
\qquad\text{in }\Lambda^p(\R^n)
\qquad\text{as }j\to\infty.
\end{equation}
Moreover,
\begin{equation} \label{eq:Talpha(u)inX}
T_\alpha(u)\in \X
\end{equation}
and
\begin{equation}\label{eq:all-alpha-energy}
\|T_\alpha(u) \|_{\X}^p=\int_{\R^n}|\nabla T_\alpha(u)|^p\,\dx
+\int_{\R^n}V \, |T_\alpha(u)|^p\,\dx
\le
\alpha \, \|f\|_{1}.
\end{equation}
\end{lemma}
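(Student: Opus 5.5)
Fix $\alpha>0$ and choose an integer $m\in\N$ with $m\ge\alpha$. The identity $T_\alpha=T_\alpha\circ T_m$ holds on $\R$, and $T_\alpha$ is $1$-Lipschitz. Hence pointwise
\[
\min\big(|T_\alpha(u_{k_j})-T_\alpha(u)|,1\big)\le \min\big(|T_m(u_{k_j})-T_m(u)|,1\big).
\]
Integrating the $p$-th power and invoking \eqref{eq:diag_conv} from Lemma~\ref{lem:diagonal_entropy} gives \eqref{eq:all-alpha-Lambda} at once. In particular, $T_\alpha(u_{k_j})\to T_\alpha(u)$ in measure. Passing to a further subsequence (harmless for the remaining claims, which concern only $u$), we may also assume a.e.\ convergence.

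For \eqref{eq:Talpha(u)inX} and \eqref{eq:all-alpha-energy}, we use weak compactness in $\X$. By Lemma~\ref{lem:energy_estimate_Tt(uk)} with $t=\alpha$, the sequence $\{T_\alpha(u_{k_j})\}_j$ satisfies $\|T_\alpha(u_{k_j})\|_{\X}^p\le\alpha\|f\|_1$, so it is bounded in $\X$. Lemma~\ref{lem:weakcompactX} then yields a further subsequence and some $w\in\X$ with $T_\alpha(u_{k_j})\rightharpoonup w$ weakly in $L^p(\R^n)$ and
\[
\|w\|_{\X}\le\liminf_j\|T_\alpha(u_{k_j})\|_{\X}\le(\alpha\|f\|_1)^{1/p}.
\]
It remains to identify $w=T_\alpha(u)$, and this is the only genuine step, since weak convergence alone does not give pointwise information and we have no compactness in $L^p$ globally. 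On any ball $B_R$ the functions $T_\alpha(u_{k_j})$ are bounded by $\alpha$ and converge a.e.\ to $T_\alpha(u)$. Dominated convergence on $B_R$ therefore gives, for any $\phi\in C_c^\infty(\R^n)$, that $\int T_\alpha(u_{k_j})\phi\,\dx\to\int T_\alpha(u)\phi\,\dx$. The same integrals also converge to $\int w\phi\,\dx$ by weak convergence. Hence $w=T_\alpha(u)$ a.e. (Alternatively, Vitali's theorem or Mazur's lemma would give the same conclusion, but the bounded, locally dominated argument suffices.)

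Consequently $T_\alpha(u)=w\in\X$. The lower semicontinuity bound gives exactly \eqref{eq:all-alpha-energy}:
\[
\|T_\alpha(u)\|_{\X}^p\le\liminf_j\|T_\alpha(u_{k_j})\|_{\X}^p\le\alpha\|f\|_1.
\]
This also recovers the hypothesis \eqref{eq:aux_appendix} of Lemma~\ref{lem:convmeasure_from_trunc}, since $V\ge1$, so $u_{k_j}\to u$ in measure along the full diagonal subsequence.
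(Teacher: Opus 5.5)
Your proof is correct and follows the paper's argument. You use the same composition $T_\alpha=T_\alpha\circ T_m$ with the $1$-Lipschitz bound to get the $\Lambda^p$ convergence, and the same combination of Lemma~\ref{lem:energy_estimate_Tt(uk)}, Lemma~\ref{lem:weakcompactX} and lower semicontinuity to get $T_\alpha(u)\in\X$ with the energy bound.

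The only difference is how you identify the weak limit $w$. The paper uses $|T_\alpha(u_{k_j})-T_\alpha(u)|\le 2\alpha$, so that $|T_\alpha(u_{k_j})-T_\alpha(u)|^p\le \max\{1,(2\alpha)^p\}\min(|T_\alpha(u_{k_j})-T_\alpha(u)|,1)^p$, and the $\Lambda^p$ convergence already gives strong convergence in $L^p(\R^n)$. Your remark that no global $L^p$ compactness is available is therefore inaccurate. Your a.e.\ convergence plus local dominated convergence against test functions is nonetheless a valid, slightly less direct, alternative.
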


\begin{proof}
Fix $\alpha>0$ and choose an integer $m>\alpha$. Note that $T_\alpha\circ T_m=T_\alpha$, so
\[
T_\alpha(u_{k_j})=T_\alpha(T_m(u_{k_j})),
\qquad
T_\alpha(u)=T_\alpha(T_m(u)).
\]
Since $T_\alpha:\R\to\R$ is $1$-Lipschitz,
\[
|T_\alpha(a)-T_\alpha(b)|\le |a-b|
\qquad\text{for all } a,b\in\R,
\]
and therefore
\begin{align*}
\min\bigl(|T_\alpha(u_{k_j})-T_\alpha(u)|,1\bigr)& = \min\bigl(|T_\alpha(T_m(u_{k_j}))-T_\alpha(T_m(u))|,1\bigr) \\
& \le
\min\bigl(|T_m(u_{k_j})-T_m(u)|,1\bigr).
\end{align*}
By Lemma~\ref{lem:diagonal_entropy},
\[
T_m(u_{k_j})\to T_m(u)\qquad\text{in }\Lambda^p(\R^n) \qquad \text{as }j \to \infty,
\]
hence
\[
\int_{\R^n}\min\bigl(|T_\alpha(u_{k_j})-T_\alpha(u)|,1\bigr)^p\,\dx\to 0 \qquad \text{as }j \to \infty,
\]
which proves \eqref{eq:all-alpha-Lambda}.

Moreover, since
\[
|T_\alpha(u_{k_j})-T_\alpha(u)|\le 2\alpha,
\]
we have
\[
|T_\alpha(u_{k_j})-T_\alpha(u)|^p
\le
C_\alpha
\min\bigl(|T_\alpha(u_{k_j})-T_\alpha(u)|,1\bigr)^p
\]
with $C_\alpha = \max\{1,(2\alpha)^p\}$. Using \eqref{eq:all-alpha-Lambda}, we get
\begin{equation*} \label{eq:convLp_aux1}
T_\alpha(u_{k_j})\to T_\alpha(u)
\qquad\text{in }L^p(\R^n) \qquad \text{as }j\to \infty.
\end{equation*}

By Lemma~\ref{lem:energy_estimate_Tt(uk)}, the sequence $\{T_\alpha(u_{k_j})\}_{j\in\N}$ is bounded in $\X$. Hence, by Lemma~\ref{lem:weakcompactX}, after passing to a subsequence there exists $w\in \X$ such that
\[
T_\alpha(u_{k_j}) \rightharpoonup w
\qquad\text{weakly in }L^p(\R^n),
\]
and
\[
\|w\|_{\X}\le \liminf_{j\to\infty}\|T_\alpha(u_{k_j})\|_{\X}.
\]
Since $T_\alpha(u_{k_j})\to T_\alpha(u)$ in $L^p(\R^n)$,
it follows that $w=T_\alpha(u)$ a.e.\ in $\R^n$. Thus $T_\alpha(u)\in \X$
and \eqref{eq:all-alpha-energy} follows from the lower semicontinuity estimate and~\eqref{eq:energy_estimate_Tt(uk)}.
\end{proof}

\section{Asymptotic energy solutions} \label{sec:asymptotic_ES}

In this section, we introduce the notion of asymptotic energy solution and prove
Theorem~\ref{thm:AES}. The existence part follows by showing that the limit
function obtained in Section~\ref{sec:passage_limit} satisfies the definition.
We then prove uniqueness by means of the stability estimate from
Section~\ref{sec:approximate}. Finally, we show that, in the duality regime
$f\in L^1(\R^n)\cap L^{p'}(\R^n)$, asymptotic energy solutions are consistent
with the classical weak formulation.

\begin{definition}\label{def:ATS}
Let $f\in L^1(\R^n)$ and let $V:\R^n\to[1,\infty)$ belong to $L^\infty_{\rm loc}(\R^n)$. A function $u\in \Lambda^p(\R^n)$ is called an
\emph{asymptotic energy solution} of~\eqref{eq:p-Schrodinger} provided:
\begin{enumerate}[(i)]
\item For every $\alpha>0$, one has $T_\alpha(u)\in \X$.
\smallskip
\item There exists a sequence $\{f_j\}_{j\in\N}\subseteq L^1(\R^n)\cap L^{p'}(\R^n)$
such that
\[
f_j\to f \qquad\text{in }L^1(\R^n) \qquad\text{as }j\to\infty,
\]
and, if $u_j\in \X$ denotes the unique weak energy solution of~\eqref{eq:p-Schrodinger} with datum~$f_j$, then for every $\alpha>0$,
\[
T_\alpha(u_j-u)\to 0
\qquad\text{in }\X
\qquad\text{as }j\to\infty.
\]
\end{enumerate}
In this case, we say that $\{u_j\}_{j \in \N}$ is an approximate sequence for $u$.
\end{definition}

\subsection{Proof of Theorem~\ref{thm:AES}: Existence}

For each $k \in \N$, let $f_k=T_k(f)\chi_{B_k}$ and let $u_k \in \X$ be the
unique weak energy solution of~\eqref{eq:p-Schrodinger} with right-hand side
$f_k$, according to Lemma~\ref{lem:approx_uk}. By Lemmas~\ref{lem:diagonal_entropy}
and~\ref{lem:all-alpha-trunc}, there exist a subsequence
$\{u_{k_j}\}_{j\in\N}$ and a function $u\in \Lambda^p(\R^n)$ such that
\eqref{eq:all-alpha-Lambda}--\eqref{eq:all-alpha-energy} hold. In particular,
by~\eqref{eq:Talpha(u)inX}, the function $u$ satisfies condition~\textup{(i)} of
Definition~\ref{def:ATS}. Moreover, by Lemma~\ref{lem:convmeasure_from_trunc}, the sequence
$\{u_{k_j}\}_{j\in\N}$ converges to $u$ in measure.

We now prove that, for every $\alpha>0$,
\begin{equation}\label{eq:Talpha(ukj-u)to0}
T_\alpha(u_{k_j}-u)\to 0
\qquad\text{in }\X
\qquad\text{as }j\to\infty.
\end{equation}
Fix $\alpha>0$ and $j\in\N$. By Lemma~\ref{lem:stability_approx},
\begin{equation}\label{eq:aux_existence1}
\|T_\alpha(u_{k_j}-u_{k_\ell})\|_{\X}^p
\le
C_p\,\alpha\,\|f_{k_j}-f_{k_\ell}\|_1
\qquad\text{for all }\ell\in\N.
\end{equation}
Set
\[
w_\ell:=T_\alpha(u_{k_j}-u_{k_\ell}).
\]
From~\eqref{eq:aux_existence1} and the uniform bound $\|f_k \|_1 \leq \|f \|_1$, we see that $\{w_\ell\}_{\ell\in\N}$ is bounded in~$\X$. Then, by Lemma~\ref{lem:weakcompactX}, after passing to a subsequence, still denoted by $\{w_\ell \}_{\ell \in \N}$, there
exists $\widetilde w\in \X$ such that
\[
w_\ell\rightharpoonup \widetilde w
\qquad\text{weakly in }L^p(\R^n),
\]
and
\[
\|\widetilde w\|_{\X}\le \liminf_{\ell\to\infty}\|w_\ell\|_{\X}.
\]

Next, since $T_\alpha:\R\to\R$ is $1$-Lipschitz and $u_{k_\ell}\to u$ in measure,
we claim that
\begin{equation}\label{eq:aux_existence2}
w_\ell\to T_\alpha(u_{k_j}-u)
\qquad\text{in measure}.
\end{equation}
Indeed, for every $\varepsilon>0$,
\begin{align*}
\bigl\{|w_\ell-T_\alpha(u_{k_j}-u)|>\varepsilon\bigr\}
&=
\Bigl\{\bigl|T_\alpha(u_{k_j}-u_{k_\ell})-T_\alpha(u_{k_j}-u)\bigr|>\varepsilon\Bigr\} \\
&\subseteq
\bigl\{|u_{k_\ell}-u|>\varepsilon\bigr\},
\end{align*}
and the right-hand side has measure tending to zero as $\ell\to\infty$.

Now, since $|w_\ell|\le \alpha$ and $|T_\alpha(u_{k_j}-u)|\le \alpha$, we have, for each $\varepsilon>0$ and $R>0$,
\begin{align*}
\int_{B_R}|w_\ell-T_\alpha(u_{k_j}-u)|^p\,\dx
&=
\int_{B_R\cap\{|w_\ell-T_\alpha(u_{k_j}-u)|\le\varepsilon\}}
|w_\ell-T_\alpha(u_{k_j}-u)|^p\,\dx \\
&\quad +
\int_{B_R\cap\{|w_\ell-T_\alpha(u_{k_j}-u)|>\varepsilon\}}
|w_\ell-T_\alpha(u_{k_j}-u)|^p\,\dx \\
&\le
\varepsilon^p |B_R|
+
(2\alpha)^p
\bigl|
B_R\cap\{|w_\ell-T_\alpha(u_{k_j}-u)|>\varepsilon\}
\bigr|.
\end{align*}
By~\eqref{eq:aux_existence2}, the second term tends to zero as $\ell\to\infty$.
Hence
\[
\limsup_{\ell\to\infty}
\int_{B_R}|w_\ell-T_\alpha(u_{k_j}-u)|^p\,\dx
\le \varepsilon^p |B_R|.
\]
Letting $\varepsilon\downarrow0$, we conclude that
\[
w_\ell\to T_\alpha(u_{k_j}-u)
\qquad\text{in }L^p(B_R).
\]

We now identify the weak limit. Since $w_\ell\rightharpoonup \widetilde w$
weakly in $L^p(\R^n)$, it follows by restriction that $w_\ell\rightharpoonup
\widetilde w$ weakly in $L^p(B_R)$. On the other hand, the strong convergence
just proved implies that $w_\ell\rightharpoonup T_\alpha(u_{k_j}-u)$ weakly in
$L^p(B_R)$. By uniqueness of the weak limit,
\[
\widetilde w=T_\alpha(u_{k_j}-u)
\qquad\text{a.e.\ on }B_R.
\]
Since $R>0$ is arbitrary, we conclude that
\[
\widetilde w=T_\alpha(u_{k_j}-u)
\qquad\text{a.e.\ on }\R^n.
\]

It follows that
\[
T_\alpha(u_{k_j}-u)\in \X,
\]
and
\begin{align*}
\|T_\alpha(u_{k_j}-u)\|_{\X}^p
&\le \liminf_{\ell\to\infty}\|w_\ell\|_{\X}^p \\
& \leq \liminf_{\ell\to\infty} C_p \alpha \|f_{k_j} - f_{k_\ell} \|_1.
\end{align*}
Since $f_{k_\ell}\to f$ in $L^1(\R^n)$ and $f_{k_j}$ is fixed, we have
\[
\|f_{k_j}-f_{k_\ell}\|_1 \to \|f_{k_j}-f\|_1
\qquad\text{as }\ell\to\infty.
\]
Combining the previous estimates, we obtain
\[
\|T_\alpha(u_{k_j}-u)\|_{\X}^p
\le
C_p\,\alpha\,\|f_{k_j}-f\|_1 \to 0 \qquad \text{as } j \to \infty,
\]
which proves~\eqref{eq:Talpha(ukj-u)to0}, showing that $u$ is an asymptotic energy solution of~\eqref{eq:p-Schrodinger}.

\qed

\subsection{Proof of Theorem~\ref{thm:AES}: Uniqueness}

Let $u,v\in \Lambda^p(\R^n)$ be two asymptotic energy solutions of
\eqref{eq:p-Schrodinger}. By definition, there exist sequences
\[
g_j,h_j\in L^1(\R^n)\cap L^{p'}(\R^n)
\]
such that
\[
g_j\to f,\qquad h_j\to f
\qquad\text{in }L^1(\R^n),
\]
and if $u_j,v_j\in \X$ denote the corresponding weak energy solutions, then for every
$\alpha>0$,
\begin{equation} \label{eq:aux_uniqueness1}
T_\alpha(u_j-u)\to 0, \qquad T_\alpha(v_j-v)\to 0, 
\qquad\text{in }\X \qquad \text{as } j \to \infty,
\end{equation}

Applying the argument in the proof of Lemma~\ref{lem:stability_approx} to the approximate sequences $\{u_j\}_{j\in \N}$ and $\{v_j\}_{j \in \N}$, we obtain 
\[ \|T_\alpha(u_j-v_j)\|_{\X}^p \le C_p\,\alpha\,\|g_j-h_j\|_1 \qquad\text{for every }j\in\N. \]
 Since $g_j\to f$ and $h_j\to f$ in $L^1(\R^n)$, it follows that \[ T_\alpha(u_j-v_j)\to 0 \qquad\text{in }\X \qquad\text{as }j\to\infty. \] 
 In particular, 
 \begin{equation} \label{eq:aux_uniqueness2}
T_\alpha(u_j-v_j)\to 0 \qquad\text{in measure}.
\end{equation}

On the other hand, \eqref{eq:aux_uniqueness1} implies that, for every $\alpha>0$,
\[
T_\alpha(u_j-u)\to 0,
\qquad
T_\alpha(v_j-v)\to 0
\qquad\text{in measure}.
\]
Now fix $\varepsilon>0$ and choose $\alpha>\varepsilon$.
Then
\[
\{|u_j-u|>\varepsilon\}\subseteq \{|T_\alpha(u_j-u)|>\varepsilon\},
\]
and similarly
\[
\{|v_j-v|>\varepsilon\}\subseteq \{|T_\alpha(v_j-v)|>\varepsilon\}.
\]
It follows that
\[
u_j\to u,
\qquad
v_j\to v
\qquad\text{in measure}.
\]
Hence $u_j-v_j\to u-v$ in measure and, because $T_\alpha$ is $1$-Lipschitz, \[ T_\alpha(u_j-v_j)\to T_\alpha(u-v) \qquad\text{in measure}. \] By uniqueness of the limit, it follows from~\eqref{eq:aux_uniqueness2} that 
\[ T_\alpha(u-v)=0 \qquad\text{a.e.\ on }\R^n. \] 

Since $\alpha>0$ is arbitrary, we conclude that \[ u=v \qquad\text{a.e.\ on }\R^n, \] which finishes the proof of the theorem. \qed

\subsection{Consistency with the weak formulation}

\begin{proposition}\label{prop:Lpprime_upgrade}
Assume $2\le p<\infty$ and let $f\in L^1(\R^n)\cap L^{p'}(\R^n)$.
Then the unique weak energy solution of~\eqref{eq:p-Schrodinger} is the unique
asymptotic energy solution of~\eqref{eq:p-Schrodinger}.
\end{proposition}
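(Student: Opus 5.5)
Since $f\in L^1(\R^n)\cap L^{p'}(\R^n)$, Lemma~\ref{lem:approx_uk} applies directly with datum $f$ in place of $f_k$. The proof of that lemma only uses $f_k\in L^{p'}(\R^n)$ to obtain coercivity and weak continuity of the linear term. We therefore obtain a unique weak energy solution $w\in\X$ of~\eqref{eq:p-Schrodinger} with datum $f$. The plan is to check that $w$ satisfies Definition~\ref{def:ATS}. Uniqueness of asymptotic energy solutions (already proved in Theorem~\ref{thm:AES} for $p\ge2$) then identifies $w$ with the asymptotic energy solution. Note that $w\in\X\subseteq L^p(\R^n)\subseteq\Lambda^p(\R^n)$, because $\min(|w|,1)^p\le|w|^p$. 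So $w$ is an admissible candidate.

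For condition~(i), I will argue as in the paragraph preceding Lemma~\ref{lem:energy_estimate_Tt(uk)}. Since $T_\alpha$ is Lipschitz with $T_\alpha(0)=0$, the chain rule gives $T_\alpha(w)\in W^{1,p}(\R^n)$ with $|\nabla T_\alpha(w)|\le|\nabla w|$. Moreover, $|T_\alpha(w)|\le|w|$ gives $\int V|T_\alpha(w)|^p\,\dx\le\int V|w|^p\,\dx<\infty$. Hence $T_\alpha(w)\in\X$ for every $\alpha>0$.

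For condition~(ii), the simplest choice is the constant sequence $f_j:=f$. This lies in $L^1(\R^n)\cap L^{p'}(\R^n)$ and trivially converges to $f$ in $L^1(\R^n)$. By the uniqueness part of Lemma~\ref{lem:approx_uk}, the corresponding weak energy solutions are $u_j=w$ for all $j$. Then $T_\alpha(u_j-w)=T_\alpha(0)=0$ in $\X$, so the convergence requirement holds trivially.

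As a more robust alternative, one can avoid the constant sequence and use any approximation, e.g.\ $f_j=T_j(f)\chi_{B_j}$. The stability estimate of Lemma~\ref{lem:stability_approx} holds verbatim for two weak energy solutions with data $f_j$ and $f$. It gives $\|T_\alpha(u_j-w)\|_{\X}^p\le C_p\,\alpha\,\|f_j-f\|_1\to0$. Either route shows that $w$ is an asymptotic energy solution, and uniqueness from Theorem~\ref{thm:AES} concludes the proof.

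There is no serious obstacle. The only point requiring care is that Lemma~\ref{lem:approx_uk} and the stability argument are stated for the $f_k$, but they only use $L^{p'}$-integrability of the datum (and $p\ge2$ for stability), so they transfer to $f$ unchanged.
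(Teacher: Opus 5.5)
Your proposal is correct and is essentially the paper's proof: the weak energy solution lies in $\X\subseteq\Lambda^p(\R^n)$ and has all truncations in $\X$. The constant sequences $f_j=f$, $u_j=u$ form an approximate sequence, and uniqueness of asymptotic energy solutions concludes; your alternative via the stability estimate with $f_j=T_j(f)\chi_{B_j}$ is also valid but not needed.
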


\begin{proof}
Let $u\in \X \subseteq \Lambda^p(\R^n)$ be the unique weak energy solution of~\eqref{eq:p-Schrodinger}.
Since $u\in \X$, we have $T_\alpha(u)\in \X$ for every $\alpha>0$. Now take the constant sequences
\[
f_j:=f \qquad\text{and}\qquad u_j:=u
\qquad\text{for all }j\in\N.
\]
Then $\{f_j\}_{j \in \N}$ satisfies the requirements in
Definition~\ref{def:ATS}, and $\{u_j\}_{j \in \N}$ is an approximate sequence for $u$.
Hence, $u$ is an asymptotic energy solution. The uniqueness of asymptotic
energy solutions then yields the conclusion.
\end{proof}

\section{Conditional upgrades of asymptotic energy solutions} \label{sec:cond_upgrades}

In this section, we introduce a stronger localized formulation that is compatible with the approximate problems. We then show that any asymptotic energy solution
can be upgraded to this stronger notion under an additional compactness assumption on the gradients of the truncations. Finally, we prove that, under
suitable local regularity, such a localized asymptotic solution is distributional.

For scalars $\alpha,t>0$ and real-valued measurable functions $v,\phi$ on
$\R^n$, we set
\begin{equation}\label{eq:def_H_alpha_t}
\mathcal H_{\alpha,t}(v,\phi)
\coloneqq
T_t\!\big(T_\alpha(v)-\phi\big)-T_t\!\big(T_\alpha(v)\big).
\end{equation}

\subsection{Localized asymptotic solutions}

\begin{definition}\label{def:LTES_compact}
A function $u\in \Lambda^p(\R^n)$ is called a
\emph{localized asymptotic solution} of~\eqref{eq:p-Schrodinger} provided:
\begin{enumerate}[(i)]
\item For every $\alpha>0$, one has $T_\alpha(u)\in \X$.
\smallskip
\item For every $t>0$, every $\phi\in W^{1,p}(\R^n)\cap L^\infty(\R^n)$
with compact support, and every $\alpha>t + \|\phi\|_{\infty}$, one has
\begin{equation}\label{eq:LTES_compact_identity}
\begin{split}
\int_{\R^n} &
|\nabla T_\alpha(u)|^{p-2}\nabla T_\alpha(u)\cdot
\nabla \mathcal H_{\alpha,t}(u,\phi)\,\dx \\
&+
\int_{\R^n} V\,|T_\alpha(u)|^{p-2}T_\alpha(u)\,
\mathcal H_{\alpha,t}(u,\phi)\,\dx 
= \int_{\R^n} f\,\mathcal H_{\alpha,t}(u,\phi)\,\dx .
\end{split}
\end{equation}
\end{enumerate}
\end{definition}

\begin{remark}\label{rem:support_H_alpha_t}
If $\phi$ has compact support, then $\mathcal H_{\alpha,t}(v,\phi)$ vanishes
outside $\supp \phi$. Indeed, if $x\notin \supp\phi$, then $\phi(x)=0$, and
therefore
\[
\mathcal H_{\alpha,t}(v,\phi)(x)
=
T_t\!\big(T_\alpha(v(x))\big)-T_t\!\big(T_\alpha(v(x))\big)
=0.
\]
Thus, the perturbation in \eqref{eq:LTES_compact_identity} is compactly
supported.
\end{remark}

\begin{theorem}\label{thm:conditional_upgrade}
Assume $p\ge2$. Let $u\in \Lambda^p(\R^n)$ be the asymptotic energy solution
of~\eqref{eq:p-Schrodinger}, and let $\{u_j\}_{j\in\N}\subseteq \X$ be an
approximate sequence for $u$, with corresponding data
$\{f_j\}_{j\in\N}\subseteq L^1(\R^n)\cap L^{p'}(\R^n)$. Assume, in addition,
that for every $\alpha>0$,
\begin{equation}\label{eq:strong_local_grad_assumption}
\nabla T_\alpha(u_j)\to \nabla T_\alpha(u)
\qquad\text{in }L^p_{\loc}(\R^n;\R^n)
\qquad\text{as }j\to\infty.
\end{equation}
Then $u$ is a localized asymptotic solution of~\eqref{eq:p-Schrodinger}.
\end{theorem}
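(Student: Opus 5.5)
Condition (i) of Definition~\ref{def:LTES_compact} is part of the definition of an asymptotic energy solution, so only the identity \eqref{eq:LTES_compact_identity} needs proof. Fix $t>0$, a compactly supported $\phi\in W^{1,p}(\R^n)\cap L^\infty(\R^n)$, and $\alpha>t+\|\phi\|_\infty$. Lemma~\ref{lem:approx_CTI} does not use the particular choice of data. Applied to each $u_j$ with datum $f_j$, it gives \eqref{eq:approx_compact_identity} for $\Phi_j:=\mathcal H_{\alpha,t}(u_j,\phi)$. All these functions are supported in the fixed compact set $K:=\supp\phi$ and satisfy $|\Phi_j|\le 2t$. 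The plan is to pass to the limit $j\to\infty$ in each of the three terms, with everything localized to $K$.

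\emph{Pointwise convergence.} From $T_\alpha(u_j-u)\to0$ in $\X$ we get $u_j\to u$ in measure, exactly as in the uniqueness proof. Passing to a subsequence, we may assume $u_j\to u$ a.e., and also $\nabla T_\alpha(u_j)\to\nabla T_\alpha(u)$ a.e. with an $L^p(K)$ dominating function, by \eqref{eq:strong_local_grad_assumption}. Since $T_t$ and $T_\alpha$ are continuous, $\Phi_j\to\Phi:=\mathcal H_{\alpha,t}(u,\phi)$ a.e., with $|\Phi_j|\le 2t$.

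\emph{Right-hand side.} Write $f_j\Phi_j-f\Phi=(f_j-f)\Phi_j+f(\Phi_j-\Phi)$. The first piece is bounded by $2t\|f_j-f\|_1$. The second tends to zero by dominated convergence with majorant $4t|f|$.

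\emph{Potential term.} On $K$ the integrand $V|T_\alpha(u_j)|^{p-2}T_\alpha(u_j)\Phi_j$ is bounded by $\|V\|_{L^\infty(K)}\alpha^{p-1}2t$ and converges a.e. Since $|K|<\infty$, dominated convergence applies.

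\emph{Gradient term (the main obstacle).} Here $\nabla\Phi_j$ contains the indicator factors $T_t'(T_\alpha(u_j)-\phi)$ and $T_t'(T_\alpha(u_j))$, which need not converge a.e. on the level sets $\{|T_\alpha(u)-\phi|=t\}$ and $\{|T_\alpha(u)|=t\}$. Rather than differentiating, I will use the structure of the chain rule and write
\[
\nabla\Phi_j=\chi_{\{|T_\alpha(u_j)-\phi|<t\}}\bigl(\nabla T_\alpha(u_j)-\nabla\phi\bigr)-\chi_{\{|T_\alpha(u_j)|<t\}}\nabla T_\alpha(u_j).
\]
\begin{itemize}
\item On the set where $|T_\alpha(u)-\phi|\neq t$, the first indicator converges a.e. by the pointwise convergence of $u_j$.
\item On $\{T_\alpha(u)-\phi=\pm t\}$, Stampacchia's theorem gives $\nabla(T_\alpha(u)-\phi)=0$ a.e., so the limit integrand there is $0$ regardless of the indicator. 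Similarly $\nabla T_\alpha(u)=0$ a.e. on $\{|T_\alpha(u)|=t\}$.
\end{itemize}
It then suffices to show
\[
|\nabla T_\alpha(u_j)|^{p-2}\nabla T_\alpha(u_j)\cdot\nabla\Phi_j\to |\nabla T_\alpha(u)|^{p-2}\nabla T_\alpha(u)\cdot\nabla\Phi \quad\text{in } L^1(K).
\]
To get this, decompose the product and use the strong $L^p(K)$ convergence of $\nabla T_\alpha(u_j)$. This yields $|\nabla T_\alpha(u_j)|^{p-2}\nabla T_\alpha(u_j)\to|\nabla T_\alpha(u)|^{p-2}\nabla T_\alpha(u)$ strongly in $L^{p'}(K)$, via a.e. convergence plus the generalized dominated convergence theorem with majorants $|\nabla T_\alpha(u_j)|^{p}$, whose integrals converge.

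The remaining factors are the indicator factors times $\nabla T_\alpha(u_j)$ and $\nabla\phi$. For these, write each indicator product as the limit expression plus an error. The error is bounded by $|\nabla T_\alpha(u_j)-\nabla T_\alpha(u)|$ plus the term $|\chi_j-\chi|\,(|\nabla T_\alpha(u)|+|\nabla\phi|)$. The second part tends to zero in $L^p(K)$ by dominated convergence, because $\chi_j\to\chi$ a.e. off the level sets, and on the level sets the relevant gradient combination vanishes. One must pair the correct gradient combination with each indicator: $\nabla T_\alpha(u)-\nabla\phi$ vanishes on $\{|T_\alpha(u)-\phi|=t\}$. Hölder's inequality then gives convergence of the gradient integral, and the limiting identity is \eqref{eq:LTES_compact_identity}.

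Finally, because the limit does not depend on the subsequence, the identity holds for $u$. Every step above uses only the a.e. and local strong convergence, so the hypothesis $p\ge2$ enters only through the existence of the approximate sequence.
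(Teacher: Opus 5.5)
Your proposal is correct and follows essentially the same route as the paper: apply Lemma~\ref{lem:approx_CTI} to each $u_j$, pass to the limit term by term on $\supp\phi$, and control $\nabla\mathcal H_{\alpha,t}(u_j,\phi)$ through the chain rule together with the fact that gradients vanish a.e.\ on level sets (the paper packages this as continuity of $w\mapsto T_t(w)$ on $W^{1,p}(B_R)$). You handle the flux and zero-order terms by a.e.\ convergence and (generalized) dominated convergence instead of the paper's $p\ge2$ inequalities, which is a harmless variant that makes the limit passage itself independent of $p\ge2$; note only that the majorant in the indicator step must be $|\chi_j-\chi|\,|\nabla T_\alpha(u)-\nabla\phi|$ rather than $|\chi_j-\chi|\,(|\nabla T_\alpha(u)|+|\nabla\phi|)$, which is what your own pairing remark requires.
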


\begin{proof}
We show that $u$ satisfies Definition~\ref{def:LTES_compact}. 
By Definition~\ref{def:ATS}, one has $T_\alpha(u)\in \X$ for all $\alpha > 0$.

Fix $t>0$, let $\phi\in W^{1,p}(\R^n)\cap L^\infty(\R^n)$ be compactly supported,
and assume $\alpha>t + \|\phi\|_\infty$. Choose $R>0$ such that $\supp\phi\subseteq B_R$ and set
\[
\Phi_j:=\mathcal H_{\alpha,t}(u_j,\phi),
\qquad
\Phi:=\mathcal H_{\alpha,t}(u,\phi).
\]
Both $\Phi_j$ and $\Phi$ vanish outside
$B_R$, and
\[
|\Phi_j|\le 2t,
\qquad
|\Phi|\le 2t
\qquad\text{on }\R^n.
\]

Since $\{u_j\}_{j\in\N}$ is an approximate sequence for $u$, we have
\[
T_{2\alpha}(u_j-u)\to0
\qquad\text{in }\X
\qquad\text{as }j\to\infty,
\]
hence also in $L^p(\R^n)$. Using the inequality
\[
|T_\alpha(a)-T_\alpha(b)| \leq  \min(|a-b|,2\alpha) = |T_{2\alpha}(a-b)|
\qquad\text{for all }a,b\in\R,
\]
we infer that
\[
T_\alpha(u_j)\to T_\alpha(u)
\qquad\text{in }L^p(\R^n).
\]
Together with assumption~\eqref{eq:strong_local_grad_assumption}, this gives
\begin{equation} \label{eq:LAS_aux1}
T_\alpha(u_j)\to T_\alpha(u)
\qquad\text{in }W^{1,p}(B_R).
\end{equation}

Next, we claim that the map
\[
v\mapsto T_t(v-\phi)-T_t(v)
\]
is continuous from $W^{1,p}(B_R)$ to itself. It is enough to show that
$w\mapsto T_t(w)$ is continuous on $W^{1,p}(B_R)$. If
\[
w_j\to w \qquad\text{in }W^{1,p}(B_R),
\]
then, by the Lipschitz continuity of $T_t$,
\[
T_t(w_j)\to T_t(w)
\qquad\text{in }L^p(B_R).
\]
Moreover, by the chain rule,
\[
\nabla T_t(w_j)=\chi_{\{|w_j|<t\}}\nabla w_j,
\qquad
\nabla T_t(w)=\chi_{\{|w|<t\}}\nabla w
\qquad\text{a.e.\ in }B_R,
\]
and on the level set $\{|w|=t\}$ one has $\nabla w=0$ a.e.\ by \cite[Theorem~6.19]{lieb2001analysis}. Hence
\begin{align*}
\nabla T_t(w_j)-\nabla T_t(w)
&=
\chi_{\{|w_j|<t\}}(\nabla w_j-\nabla w)
+
\bigl(\chi_{\{|w_j|<t\}}-\chi_{\{|w|<t\}}\bigr)\nabla w.
\end{align*}
The first term converges to $0$ in $L^p(B_R)$. For the second one, after
passing to a subsequence we may assume that $w_j\to w$ a.e.\ in $B_R$, so
\[
\chi_{\{|w_j|<t\}}\to \chi_{\{|w|<t\}}
\qquad\text{a.e. on }\{|w|\neq t\},
\]
while $\nabla w=0$ a.e.\ on $\{|w|=t\}$. Therefore
\[
\bigl(\chi_{\{|w_j|<t\}}-\chi_{\{|w|<t\}}\bigr)\nabla w \to 0
\qquad\text{a.e. in }B_R,
\]
and dominated convergence yields convergence to $0$ in $L^p(B_R)$. Thus
\[
T_t(w_j)\to T_t(w)
\qquad\text{in }W^{1,p}(B_R),
\]
which proves the claim.

It follows from~\eqref{eq:LAS_aux1} that
\begin{equation}\label{eq:Phi_j_to_Phi}
\Phi_j\to \Phi
\qquad\text{in }W^{1,p}(B_R)
\qquad\text{as }j\to\infty.
\end{equation}

Now, set
\[
A_j:=|\nabla T_\alpha(u_j)|^{p-2}\nabla T_\alpha(u_j),
\qquad
A:=|\nabla T_\alpha(u)|^{p-2}\nabla T_\alpha(u).
\]
Using the inequality
\[\big||\xi|^{p-2}\xi - |\eta|^{p-2}\eta\big| \leq (p-1) (|\xi| + |\eta|)^{p-2}|\xi - \eta| \qquad \text{for all }\xi,\eta \in \R^n, \]
which is valid for $p \geq 2$; cf.~\cite{lindqvist2019stationary}, we see that the map
\[
\xi\mapsto |\xi|^{p-2}\xi
\]
is continuous from $L^p(B_R;\R^n)$ to $L^{p'}(B_R;\R^n)$. This, together with assumption~\eqref{eq:strong_local_grad_assumption}, yields
\begin{equation}\label{eq:flux_conv}
A_j\to A
\qquad\text{in }L^{p'}(B_R;\R^n)
\qquad\text{as }j\to\infty.
\end{equation}

For each $j\in\N$, Lemma~\ref{lem:approx_CTI} gives
\begin{equation}\label{eq:localized_identity_j}
\int_{\R^n} A_j\cdot \nabla\Phi_j\,\dx
+
\int_{\R^n} V\,g_j\,\Phi_j\,\dx
=
\int_{\R^n} f_j\,\Phi_j\,\dx,
\end{equation}
where
\[
g_j:=|T_\alpha(u_j)|^{p-2}T_\alpha(u_j).
\]
Set also
\[
g:=|T_\alpha(u)|^{p-2}T_\alpha(u).
\]

For the gradient term, we write
\begin{align*}
\left|
\int_{\R^n} A_j\cdot \nabla\Phi_j\,\dx
-
\int_{\R^n} A\cdot \nabla\Phi\,\dx
\right|
&\le
\|A_j-A\|_{L^{p'}(B_R)}\,\|\nabla\Phi_j\|_{L^p(B_R)} \\
&\quad+
\|A\|_{L^{p'}(B_R)}\,\|\nabla\Phi_j-\nabla\Phi\|_{L^p(B_R)}.
\end{align*}
By \eqref{eq:Phi_j_to_Phi} and \eqref{eq:flux_conv}, both terms on the right-hand side tend to $0$.

For the zero-order term, since $p\ge2$, the scalar map
\[
s\mapsto |s|^{p-2}s
\]
is Lipschitz on $[-\alpha,\alpha]$. Since
\[
T_\alpha(u_j)\to T_\alpha(u)
\qquad\text{in }L^p(B_R),
\]
it follows that
\[
g_j\to g
\qquad\text{in }L^p(B_R),
\]
and therefore also in $L^1(B_R)$. Since $V\in L^\infty(B_R)$,
$|g|\le \alpha^{p-1}$, and $|\Phi_j|\le 2t$, we obtain
\begin{align*}
\Big|
\int_{\R^n} V\,g_j\,\Phi_j\,\dx
& -
\int_{\R^n} V\,g\,\Phi\,\dx
\Big| \\
&\le
\|V\|_{L^\infty(B_R)}
\left(
2t\,\|g_j-g\|_{L^1(B_R)}
+
\alpha^{p-1}\|\Phi_j-\Phi\|_{L^1(B_R)} \right),
\end{align*}
which tends to $0$ by the convergence of $\{g_j \}_{j\in\N}$ to $g$ in $L^1(B_R)$ and by~\eqref{eq:Phi_j_to_Phi}.

For the right-hand side of~\eqref{eq:localized_identity_j}, we write
\[
\int_{\R^n} f_j\Phi_j\,\dx-\int_{\R^n}f\Phi\,\dx
=
\int_{\R^n}(f_j-f)\Phi_j\,\dx
+
\int_{\R^n}f(\Phi_j-\Phi)\,\dx .
\]
Since $|\Phi_j|\le 2t$, the first term satisfies
\[
\left|\int_{\R^n}(f_j-f)\Phi_j\,\dx\right|
\le 2t\,\|f_j-f\|_1
\to 0.
\]
For the second term, passing to a subsequence if necessary, we may assume that
\[
\Phi_j\to \Phi
\qquad\text{a.e.\ on }B_R.
\]
Since both $\Phi_j$ and $\Phi$ vanish outside $B_R$, it follows that
\[
\Phi_j\to \Phi
\qquad\text{a.e.\ on }\R^n.
\]
Moreover, using $|\Phi_j|\le 2t$ and $|\Phi|\le 2t$, we have
\[
|f(\Phi_j-\Phi)|\le 4t\,|f|
\qquad\text{a.e. on }\R^n.
\]
Because $f\in L^1(\R^n)$, dominated convergence yields
\[
\int_{\R^n}f(\Phi_j-\Phi)\,\dx\to 0.
\]
Therefore
\[
\int_{\R^n} f_j\Phi_j\,\dx
\to
\int_{\R^n} f\Phi\,\dx .
\]
Passing to the limit in \eqref{eq:localized_identity_j}, we obtain
\[
\int_{\R^n}
|\nabla T_\alpha(u)|^{p-2}\nabla T_\alpha(u)\cdot\nabla \Phi\,\dx
+
\int_{\R^n}
V\,|T_\alpha(u)|^{p-2}T_\alpha(u)\,\Phi\,\dx
=
\int_{\R^n} f\,\Phi\,\dx .
\]
This completes the proof.
\end{proof}

\begin{remark}
Theorem~\ref{thm:conditional_upgrade} shows that the only missing ingredient for
upgrading an asymptotic energy solution to a localized asymptotic solution is the
identification of the nonlinear gradient term. A sufficient condition is the strong local convergence of the gradients of the truncations along one approximate sequence for the solution. This is analogous to the role played by gradient convergence in classical approximation schemes for nonlinear elliptic equations; see, for instance,~\cite{boccardo1992gradients}. More broadly, the
need for such additional gradient control is related to the modern theory of
gradient estimates for quasilinear elliptic equations with rough data; see, for instance,~\cite{byun2022gradient, dong2024gradient}.
\end{remark}

\subsection{Consistency with the distributional formulation}

\begin{proposition}\label{prop:LTES_implies_distributional}
Let $u$ be a localized asymptotic solution of~\eqref{eq:p-Schrodinger} such that
\[
u\in W^{1,p}_{\loc}(\R^n)\cap L^\infty_{\loc}(\R^n).\]
Then $u$ is a distributional solution of~\eqref{eq:p-Schrodinger}, i.e.
\[
\int_{\R^n}
|\nabla u|^{p-2}\nabla u\cdot \nabla\psi\,\dx
+
\int_{\R^n}
V\,|u|^{p-2}u\,\psi\,\dx
=
\int_{\R^n} f\,\psi\,\dx,
\]
for all $\psi\in C_c^\infty(\R^n)$.
\end{proposition}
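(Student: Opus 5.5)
Fix $\psi\in C_c^\infty(\R^n)$ and choose $R>0$ with $\supp\psi\subseteq B_R$. Since $u\in L^\infty_{\loc}(\R^n)$, we may set $M:=\|u\|_{L^\infty(B_R)}<\infty$. The plan is to test the localized identity \eqref{eq:LTES_compact_identity} with $\phi=s\psi$ for small $s>0$, a truncation level $t>M+s\|\psi\|_\infty$, and $\alpha>t+s\|\psi\|_\infty$. With these choices both truncations are inactive on $B_R$. Indeed, on $B_R$ we have $T_\alpha(u)=u$, $|u-s\psi|\le M+s\|\psi\|_\infty<t$ and $|u|<t$. Hence
\[
\mathcal H_{\alpha,t}(u,s\psi)=(u-s\psi)-u=-s\psi \qquad\text{a.e. on }B_R.
\]
By Remark~\ref{rem:support_H_alpha_t} it also vanishes outside $\supp\psi\subseteq B_R$, so $\mathcal H_{\alpha,t}(u,s\psi)=-s\psi$ a.e.\ on $\R^n$. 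In particular $\nabla\mathcal H_{\alpha,t}(u,s\psi)=-s\nabla\psi$ a.e.

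Substituting into \eqref{eq:LTES_compact_identity} and dividing by $-s$ gives
\[
\int_{\R^n}|\nabla T_\alpha(u)|^{p-2}\nabla T_\alpha(u)\cdot\nabla\psi\,\dx+\int_{\R^n}V\,|T_\alpha(u)|^{p-2}T_\alpha(u)\,\psi\,\dx=\int_{\R^n}f\,\psi\,\dx .
\]
It remains to replace $T_\alpha(u)$ by $u$ on $\supp\psi$. On $B_R$ we have $T_\alpha(u)=u$ a.e., since $\alpha>M$. For the gradients, the chain rule gives $\nabla T_\alpha(u)=\chi_{\{|u|<\alpha\}}\nabla u$ a.e., with $u\in W^{1,p}(B_R)$ by assumption, and $\{|u|\ge\alpha\}\cap B_R$ is null. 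So $\nabla T_\alpha(u)=\nabla u$ a.e.\ on $B_R$, and the displayed identity is exactly the distributional formulation. All integrals are finite: $|\nabla u|^{p-1}\in L^{p'}(B_R)$, $V\in L^\infty(B_R)$, $u$ is bounded on $B_R$, and $f\in L^1$ with $\psi$ bounded.

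The main point to check is the admissibility of the test function. Definition~\ref{def:LTES_compact} requires $\phi\in W^{1,p}(\R^n)\cap L^\infty(\R^n)$ with compact support and $\alpha>t+\|\phi\|_\infty$. Both hold for $\phi=s\psi$ with our choice of parameters. The constraints $t>M+s\|\psi\|_\infty$ and $\alpha>t+s\|\psi\|_\infty$ are compatible, since $t$ and $\alpha$ may be taken arbitrarily large. In fact any fixed $s>0$, say $s=1$, already works, so no limit in $s$ is needed.

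A minor subtlety is that $u$ is only defined a.e. All the pointwise identities above therefore hold a.e.\ on $B_R$, which suffices because only integrals appear.
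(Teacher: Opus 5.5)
Your proof is correct and follows the paper's argument essentially verbatim. The paper tests the localized identity with $\phi=\lambda\psi$, $\lambda\neq 0$, and chooses $t>\|u\|_{L^\infty(B_R)}+|\lambda|\,\|\psi\|_\infty$ and $\alpha>t+|\lambda|\,\|\psi\|_\infty$, so that $\mathcal H_{\alpha,t}(u,\phi)=-\lambda\psi$ a.e.\ on $\R^n$, then divides by $-\lambda$; your remark that a single fixed scaling such as $s=1$ suffices is accurate, since the paper never uses the freedom in $\lambda$.
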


\begin{proof}
Fix $\psi\in C_c^\infty(\R^n)$ and choose $R>0$ such that
\[
\supp\psi\subseteq B_R.
\]
Let $\lambda\in\R\setminus\{0\}$. Since $u\in L^\infty(B_R)$, we may choose $t>0$
such that
\[
t>\|u\|_{L^\infty(B_R)}+|\lambda|\,\|\psi\|_\infty.
\]
Then choose $\alpha>0$ such that
\[
\alpha>t+|\lambda|\,\|\psi\|_\infty.
\]
Set
\[
\phi:=\lambda\psi.
\]
Then $\phi\in W^{1,p}(\R^n)\cap L^\infty(\R^n)$ has compact support and
$\alpha>t + \|\phi\|_\infty$, so \eqref{eq:LTES_compact_identity} applies.

On $B_R$ we have
\[
|u|<t<\alpha
\qquad\text{and}\qquad
|u-\lambda\psi|
\le
\|u\|_{L^\infty(B_R)}+|\lambda|\,\|\psi\|_\infty
<t.
\]
Hence, on $B_R$,
\[
T_\alpha(u)=u,
\qquad
\nabla T_\alpha(u)=\nabla u
\qquad\text{a.e.},
\]
and
\[
T_t\!\big(T_\alpha(u)-\phi\big)=T_t(u-\lambda\psi)=u-\lambda\psi,
\qquad
T_t\!\big(T_\alpha(u)\big)=T_t(u)=u.
\]
Thus
\[
\mathcal H_{\alpha,t}(u,\phi)=-\lambda\psi
\qquad\text{on }B_R.
\]
Outside $\supp\psi$ one has $\phi=0$, hence
\[
\mathcal H_{\alpha,t}(u,\phi)=0.
\]
Therefore
\[
\mathcal H_{\alpha,t}(u,\phi)=-\lambda\psi
\qquad\text{a.e. on }\R^n.
\]

Substituting this into \eqref{eq:LTES_compact_identity} and using
$T_\alpha(u)=u$ and $\nabla T_\alpha(u)=\nabla u$ on $\supp\psi$, we obtain
\[
-\lambda\int_{\R^n}
|\nabla u|^{p-2}\nabla u\cdot \nabla\psi\,\dx
-\lambda\int_{\R^n}
V\,|u|^{p-2}u\,\psi\,\dx
=
-\lambda\int_{\R^n}
f\,\psi\,\dx .
\]
Dividing by $-\lambda$ yields the desired distributional identity.
\end{proof}

\appendix
\section{Some facts on asymptotic $L^p$ spaces} \label{appendix}

Here we briefly recall the definition of asymptotic $L^p$ spaces on a general measure space $(X,\Sigma,\mu)$, denoted by $\Lambda^p(X)$ for $1\le p<\infty$. We record a few basic facts, including simple inclusion results and their relation with weak Lebesgue spaces.

The asymptotic $L^p$ spaces considered here originated in~\cite{alves2025F} by endowing the class of real-valued measurable functions that are \emph{almost in $L^p$} with the $\F$-norm $\|\!\min(|\idot|,1)\|_p$. More precisely, a function $f$ is almost in $L^p$ if for every $\delta>0$ there exists $E_\delta\in\Sigma$ with $\mu(E_\delta)<\delta$ such that $f\,\chi_{E_\delta^c}\in L^p(X)$. Equipped with this $\F$-norm, the space $\Lambda^p(X)$ becomes a completely metrizable topological vector space containing the classical space $L^p(X)$ as a dense subspace. This $\F$-norm generates the topology of asymptotic $L^p$-convergence and, on finite measure spaces, is equivalent to convergence in measure; see~\cite{alves2024mode, alves2024relation, alves2025F}. Moreover, several classical convergence results, such as dominated and Vitali convergence theorems, admit analogs in this framework~\cite{alves2025F}.

 The first result of this appendix characterizes the space
$\Lambda^p(X)$ in two different ways and, in particular, justifies the
definition~\eqref{eq:spaceLambda}.

\begin{proposition}
Fix $1 \leq p < \infty$, and let $f$ be a real-valued measurable function on $X$. The following statements are equivalent:
\begin{enumerate}[(i)]
\item $f$ is almost in $L^p$.
\smallskip
\item There exists a sequence $\{f_k \}_{k\in\N}$ in $L^p(X)$ such that 
\[\|\!\min(|f_k-f|,1)\|_p \to 0 \qquad \text{as} \ k \to \infty. \]
\item $\min(|f|,1) \in L^p(X)$.
\end{enumerate}
In particular, if $\mu(X) < \infty$ then $\Lambda^p(X)$ consists of all real-valued measurable functions on~$X$ with the topology of convergence in measure.
\end{proposition}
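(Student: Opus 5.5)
I will prove the cycle of implications (i)$\Rightarrow$(iii)$\Rightarrow$(ii)$\Rightarrow$(i), and then deduce the finite-measure statement from (iii).

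\emph{(i)$\Rightarrow$(iii).} Take $\delta=1$ in the definition of almost-$L^p$. This gives a set $E$ with $\mu(E)<1$ and $f\chi_{E^c}\in L^p(X)$. Then
\[
\int_X\min(|f|,1)^p\,\rd\mu\le \mu(E)+\int_{E^c}|f|^p\,\rd\mu<\infty .
\]

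\emph{(iii)$\Rightarrow$(ii).} Put $f_k:=f\chi_{\{|f|\le k\}}$. First, $f_k\in L^p(X)$. On $\{|f|\le 1\}$ we have $|f_k|=\min(|f|,1)$. On $\{|f|>1\}$ we have $|f_k|\le k$, and the set $\{|f|>1\}$ has finite measure because $\mu(\{|f|>1\})\le\|\min(|f|,1)\|_p^p$. Next, $\min(|f_k-f|,1)=\chi_{\{|f|>k\}}$. Its $p$-th power has integral $\mu(\{|f|>k\})$, which tends to $0$ by continuity from above, since these sets have finite measure and decrease to the empty set.

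\emph{(ii)$\Rightarrow$(i).} Fix $\delta>0$ and choose $k$ with $\|\min(|f_k-f|,1)\|_p^p<\delta$. Set $E_\delta:=\{|f_k-f|\ge1\}$, so $\mu(E_\delta)<\delta$. On $E_\delta^c$ we have $|f|\le|f_k|+|f-f_k|$ with $|f-f_k|<1$, and there $|f-f_k|=\min(|f-f_k|,1)$. Hence
\[
f\chi_{E_\delta^c}\in L^p(X),
\]
using Minkowski's inequality together with $f_k\in L^p(X)$ and $\min(|f_k-f|,1)\in L^p(X)$.

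\emph{Finite measure.} If $\mu(X)<\infty$, then (iii) holds automatically for every measurable $f$, because $\min(|f|,1)^p\le1$. So $\Lambda^p(X)$ consists of all measurable functions. For the topology, compare $d(u,v)=\|\min(|u-v|,1)\|_p$ with convergence in measure. For $0<\eps<1$,
\[
\eps^p\,\mu(\{|g|>\eps\})\le\int_X\min(|g|,1)^p\,\rd\mu\le \eps^p\mu(X)+\mu(\{|g|>\eps\}).
\]
The left inequality shows that convergence in $d$ implies convergence in measure. The right inequality gives the converse: first choose $\eps$ small, then let the measure term vanish.

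The only step needing care is (iii)$\Rightarrow$(ii). The key point there is that $\{|f|>1\}$ has finite measure, which ensures both that $f_k\in L^p(X)$ and that continuity from above applies. The remaining steps are routine.
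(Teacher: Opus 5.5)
Your proof is correct, but you run the cycle in the opposite direction to the paper, which proves (i)$\Rightarrow$(ii)$\Rightarrow$(iii)$\Rightarrow$(i).

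\textbf{What is shared.} Both arguments rest on the level sets $\{|f|>k\}$, the finiteness of $\mu(\{|f|>1\})$ supplied by (iii), and continuity from above.
\begin{itemize}
\item The paper uses this in (iii)$\Rightarrow$(i), via the bound $|f|^p\le K^p\min(|f|,1)^p$ off $\{|f|>K\}$.
\item You use it in (iii)$\Rightarrow$(ii), with the approximants $f\chi_{\{|f|\le k\}}$.
\end{itemize}

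\textbf{Where the mechanisms differ.}
\begin{itemize}
\item The paper gets (i)$\Rightarrow$(ii) directly by taking $f\chi_{E_k^c}$ from the definition of almost-$L^p$.
\item It gets (ii)$\Rightarrow$(iii) from the triangle inequality for $\|\!\min(|\idot|,1)\|_p$.
\item You instead go from (i) to (iii) by splitting the integral over $E$ and $E^c$.
\item You go from (ii) to (i) by writing down the explicit exceptional set $\{|f_k-f|\ge 1\}$ and applying Minkowski. This avoids the $\F$-norm triangle inequality entirely.
\end{itemize}

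\textbf{What each buys.} Your route gives a canonical approximating sequence that depends only on $f$, not on a choice of exceptional sets. The paper's route is a little quicker once the metric structure of $\Lambda^p(X)$ is taken for granted.

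\textbf{The finite-measure clause.} The paper's proof does not address it beyond what follows immediately from (iii); it defers the identification of the topology with convergence in measure to the cited references. Your two-sided estimate $\eps^p\mu(\{|g|>\eps\})\le\int_X\min(|g|,1)^p\,\dm\le\eps^p\mu(X)+\mu(\{|g|>\eps\})$ for $0<\eps<1$ supplies a complete proof of that part.
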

\begin{proof}
~\\
\emph{$(i) \Rightarrow (ii)$}. For each $k \in \N$, let $E_k \in \Sigma$ be such that $\mu(E_k) < 1/k$ and $f_k \coloneqq f \chi_{E_k^c} \in L^p(X)$. Then 
\begin{align*}
\|\! \min(|f_k-f|,1) \|_p^p & = \int_{E_k} \min(|f \chi_{E_k^c}-f|,1)^p \, \dm \\
& \leq \mu(E_k) < \frac{1}{k} \to 0 \qquad \text{as } k \to \infty.
\end{align*}
\emph{$(ii) \Rightarrow (iii)$}. Let $N \in \N$ be such that $\|\! \min(|f_N-f|,1) \|_p < 1$. Then, by the triangle inequality,
\[\|\! \min(|f|,1) \|_p \leq \|\! \min(|f-f_N|,1) \|_p + \|\! \min(|f_N|,1) \|_p < 1 + \|f_N \|_p < \infty. \]
\emph{$(iii) \Rightarrow (i)$}. For each $k \in \N$, let $E_k = \{|f| > k\}$. Then $E_{k+1} \subseteq E_k$ for all $k \in \N$, and by hypothesis we also have $\mu(E_1) < \infty$. Hence
\[\lim_{k \to \infty} \mu(E_k) = \mu \left( \bigcap_{k \in \N} E_k\right) = 0, \]
otherwise $f$ would be infinite on a set of positive measure, which is impossible as $f$ is (a.e.) real-valued. 

Thus, given $\delta > 0$ we may choose $K_{\delta} \in \N$ so that $\mu(E_{K_\delta})<\delta$. On $E_{K_\delta}^c$ we have 
\[|f|^p \leq K_\delta^p \min(|f|,1)^p, \]
which gives $f \chi_{E_{K_\delta}^c} \in L^p(X)$ and finishes the proof.
\end{proof}

The next result provides the natural nesting property of the asymptotic $L^p$ spaces, independently of whether $X$ has finite or infinite measure.

\begin{proposition}
For all $1 \leq p \leq q < \infty$, the following inclusion holds 
\[\Lambda^p(X) \subseteq \Lambda^q(X). \]
\end{proposition}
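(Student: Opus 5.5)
The plan is to prove the inclusion pointwise, using characterization (iii) of the previous proposition: a measurable $f$ lies in $\Lambda^p(X)$ exactly when $\min(|f|,1)\in L^p(X)$. So it suffices to show that $\min(|f|,1)^p\in L^1(X)$ implies $\min(|f|,1)^q\in L^1(X)$ whenever $p\le q$.

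The key observation is that $0\le \min(|f(x)|,1)\le 1$ for every $x\in X$. For any number $s\in[0,1]$ and exponents $p\le q$ we have $s^q\le s^p$. Applying this with $s=\min(|f(x)|,1)$ gives
\[
\min(|f(x)|,1)^q\le \min(|f(x)|,1)^p \qquad\text{for all } x\in X.
\]
Integrating over $X$ then yields
\[
\|\!\min(|f|,1)\|_q^q\le \|\!\min(|f|,1)\|_p^p<\infty,
\]
so $f\in\Lambda^q(X)$. This argument uses no finiteness of $\mu(X)$, which is why the inclusion holds regardless of whether $X$ has finite or infinite measure.

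As a complement, the same inequality shows that the inclusion map is continuous, since $d_q(f,g)\le d_p(f,g)^{p/q}$, although the statement only asks for the set inclusion. I expect no real obstacle here. The only point to check is that the definition of $\Lambda^p(X)$ on a general measure space is indeed given by condition (iii), and this was established in the preceding proposition.
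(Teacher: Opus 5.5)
Your proof is correct and is essentially the paper's argument: the pointwise bound $\min(|f|,1)^q\le\min(|f|,1)^p$, valid because $0\le\min(|f|,1)\le 1$, integrated over $X$ using the characterization $f\in\Lambda^p(X)$ if and only if $\min(|f|,1)\in L^p(X)$. Your extra remark that $d_q(f,g)\le d_p(f,g)^{p/q}$, so the inclusion is also continuous, is correct as well.
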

\begin{proof}
Simply note that for $1 \leq p \leq q < \infty$ one has $\min(|\idot|,1)^q \leq \min(|\idot|,1)^p$.
\end{proof}

Our last result compares weak Lebesgue spaces with asymptotic $L^p$ spaces for
different exponents. When $p=q$ and $\mu(X)=\infty$, there are examples showing
that $L^{p,\infty}(X)$ and $\Lambda^p(X)$ are not comparable, in the sense that
neither is contained in the other~\cite{alves2024relation}.

\begin{proposition}
Assume that $(X,\Sigma,\mu)$ is $\sigma$-finite. Then, for $1 \leq p < q < \infty$,
\[L^{p,\infty}(X) \subseteq \Lambda^q(X). \]
\end{proposition}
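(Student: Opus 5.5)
To prove $L^{p,\infty}(X)\subseteq\Lambda^q(X)$ for $1\le p<q<\infty$, I will fix $f\in L^{p,\infty}(X)$ and show directly that $\min(|f|,1)\in L^q(X)$. This suffices by the characterization (iii) of $\Lambda^q(X)$ in the first proposition of this appendix. Set $A:=\|f\|_{p,\infty}$, so that the distribution function $d_f(\lambda):=\mu(\{|f|>\lambda\})$ satisfies $d_f(\lambda)\le A^p\lambda^{-p}$ for every $\lambda>0$.

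The key step is the layer-cake formula applied to $g:=\min(|f|,1)$. Since $\mu$ is $\sigma$-finite, Tonelli's theorem applies, which is where that hypothesis enters. For $0<\lambda<1$ we have $\{g>\lambda\}=\{|f|>\lambda\}$, and for $\lambda\ge1$ the set $\{g>\lambda\}$ is empty. Therefore
\[
\int_X g^q\,\dm = q\int_0^1 \lambda^{q-1}\,d_f(\lambda)\,\rd\lambda \le q A^p\int_0^1 \lambda^{q-1-p}\,\rd\lambda = \frac{q}{q-p}\,A^p<\infty,
\]
and the last integral converges precisely because $q>p$. This gives the explicit bound $\|\min(|f|,1)\|_q^q\le \frac{q}{q-p}\|f\|_{p,\infty}^p$, so $f\in\Lambda^q(X)$.

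The only point requiring care is the justification of the layer-cake identity, namely the interchange of integrals. The integrand is nonnegative and jointly measurable, so Tonelli's theorem applies on the $\sigma$-finite product space. Apart from that, the argument uses only the weak-type bound at small heights $\lambda<1$. Large values of $|f|$ are harmless because the truncation $\min(|\idot|,1)$ removes them, which explains why the strict inequality $p<q$ is needed only to make the integral converge near $\lambda=0$.
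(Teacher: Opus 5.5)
Your proof is correct and matches the paper's argument: the layer-cake formula for $\min(|f|,1)$ over heights $\lambda\in(0,1)$, the weak-type bound $\mu(\{|f|>\lambda\})\le\lambda^{-p}\|f\|_{p,\infty}^p$, and the convergence of $\int_0^1\lambda^{q-1-p}\,\rd\lambda$ because $q>p$, giving the same bound $\frac{q}{q-p}\|f\|_{p,\infty}^p$. The differences are cosmetic. You use the exact identity $\{\min(|f|,1)>\lambda\}=\{|f|>\lambda\}$ for $\lambda<1$ where the paper uses an inclusion. Also, $\sigma$-finiteness is not really needed for Tonelli here, since every set $\{|f|>\lambda\}$ with $\lambda>0$ already has finite measure.
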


\begin{proof}
Let $f \in L^{p,\infty}(X)$. Since $0\le \min(|f|,1)\le 1$, the layer-cake representation yields
\begin{align*}
\int_X \min(|f(x)|,1)^q \, \dm
&= q \int_0^1 t^{q-1}\mu\bigl(\{\min(|f|,1)>t\}\bigr)\,\dt \\
&\le q \int_0^1 t^{q-1}\mu\bigl(\{|f|>t\}\bigr)\,\dt \\
&\le q \int_0^1 t^{q-1-p}\|f\|_{p,\infty}^p\,\dt \\
&= \frac{q}{q-p}\|f\|_{p,\infty}^p.
\end{align*}
Hence $\min(|f|,1)\in L^q(X)$, and therefore $f\in\Lambda^q(X)$.
\end{proof}

\medskip

{\small
\section*{Acknowledgments}
This publication is based upon work supported by King Abdullah University of Science and Technology (KAUST) under Award No. ORFS-CRG12-2024-6430. This work was initiated while N.~J.~Alves was a postdoctoral researcher at the University of Vienna, supported by the Austrian Science Fund (FWF), project number 10.55776/F65.}

\end{document}